\DeclareFontFamily{OMX}{lmex}{}
\DeclareFontShape{OMX}{lmex}{m}{n}{<->lmex10}{}
\theoremstyle{plain}
\newtheorem{thm}[equation]{Theorem}
\newtheorem{cor}[equation]{Corollary}
\newtheorem{prop}[equation]{Proposition}
\newtheorem{lem}[equation]{Lemma}
\theoremstyle{definition}
\newtheorem{defn}[equation]{Definition}
\newtheorem{hyp}[equation]{Hypothesis}
\theoremstyle{remark}
\newtheorem{rem}[equation]{Remark}
\newtheorem{exmp}[equation]{Example}
\numberwithin{equation}{section}
\DeclareMathOperator{\id}{id}
\DeclareMathOperator{\Hom}{Hom}
\DeclareMathOperator{\End}{End}
\DeclareMathOperator{\Ext}{Ext}
\DeclareMathOperator{\Ind}{Ind}
\DeclareMathOperator{\Indd}{I}
\DeclareMathOperator{\Ord}{Ord}
\DeclareMathOperator{\Art}{Art}
\DeclareMathOperator{\Noe}{Noe}
\DeclareMathOperator{\Pro}{Pro}
\DeclareMathOperator{\Res}{Res}
\DeclareMathOperator{\Def}{Def}
\DeclareMathOperator{\Mod}{Mod}
\DeclareMathOperator{\Ban}{Ban}
\DeclareMathOperator{\Cat}{Cat}
\DeclareMathOperator{\Set}{Set}
\DeclareMathOperator{\Ob}{Ob}
\DeclareMathOperator{\ord}{ord}
\DeclareMathOperator{\diag}{diag}
\newcommand{\iso}{\overset{\sim}{\longrightarrow}}
\newcommand{\vertiso}{\rotatebox{-90}{\(\sim\)}}
\newcommand{\otimesh}{\mathbin{\widehat{\otimes}}}
\newcommand{\middlevert}{\;\middle|\;}
\newcommand{\N}{\mathbb{N}}
\newcommand{\Z}{\mathbb{Z}}
\newcommand{\Q}{\mathbb{Q}}
\newcommand{\F}{\mathbb{F}}
\newcommand{\OO}{\mathcal{O}}
\newcommand{\CC}{\mathcal{C}}
\newcommand{\ZZ}{\mathcal{Z}}
\newcommand{\NN}{\mathcal{N}}
\newcommand{\m}{\mathfrak{m}}
\newcommand{\af}{\mathfrak{a}}
\newcommand{\gf}{\mathfrak{g}}
\newcommand{\tf}{\mathfrak{t}}
\newcommand{\HOrd}[1][\bullet]{\mathrm{H}^{#1}\!\Ord}
\newcommand{\Deft}{\widetilde{\Def}}
\newcommand{\GL}{\mathrm{GL}}
\newcommand{\proaug}{\mathrm{pro~aug}}
\newcommand{\lfin}{\mathrm{l.fin}}
\newcommand{\fl}{\mathrm{fl}}
\newcommand{\adm}{\mathrm{adm}}
\newcommand{\cont}{\mathrm{cont}}
\newcommand{\unit}{\mathrm{unit}}
\newcommand{\univ}{\mathrm{univ}}
\def\@@and{\unskip}
\title{Deformation rings and parabolic induction}
\subjclass[2010]{Primary 22E50; Secondary 11F70}
\keywords{$p$-adic reductive groups, smooth representations, $\m$-adically continuous representations, parabolic induction, deformations}
\author[J.~Hauseux]{Julien Hauseux}
\address{
Université de Lille\\
Département de Mathématiques\\
Cité scientifique, Bâtiment M2\\
59655 Villeneuve d'Ascq Cedex\\
France
}
\email{\href{mailto:julien.hauseux@math.univ-lille1.fr}{julien.hauseux@math.univ-lille1.fr}}
\thanks{J.H. was partly supported by EPSRC grant EP/L025302/1.}
\author[T.~Schmidt]{Tobias Schmidt}
\address{
Institut de Recherche Mathématique de Rennes\\
Campus Beaulieu\\
35042 Rennes Cedex\\
France
}
\email{\href{mailto:tobias.schmidt@univ-rennes1.fr}{tobias.schmidt@univ-rennes1.fr}}
\author[C.~Sorensen]{Claus Sorensen}
\address{
Department of Mathematics, UCSD\\
9500 Gilman Dr. \#0112\\
La Jolla, CA 92093-0112\\
USA
}
\email{\href{mailto:csorensen@ucsd.edu}{csorensen@ucsd.edu}}
\begin{document}

\begin{abstract}
We study deformations of smooth mod $p$ representations (and their duals) of a $p$-adic reductive group $G$.
Under some mild genericity condition, we prove that parabolic induction with respect to a parabolic subgroup $P=LN$ defines an isomorphism between the universal deformation rings of a supersingular representation $\bar{\sigma}$ of $L$ and of its parabolic induction $\bar{\pi}$.
As a consequence, we show that every Banach lift of $\bar{\pi}$ is induced from a unique Banach lift of $\bar{\sigma}$.
\end{abstract}

\maketitle

\tableofcontents

\section{Introduction}

Let $F/\Q_p$ be a finite extension, and let $G$ denote the $F$-points of a fixed connected reductive group defined over $F$.
In a recent paper (\cite{AHHV17}) Abe, Henniart, Herzig and Vignéras give a complete classification of the irreducible admissible $\overline{\F}_p$-representations $\bar{\pi}$ of $G$ in terms of the supersingular representations, which remain mysterious for groups other than $\GL_2(\Q_p)$ (and scattered rank one examples).
As a byproduct of the classification, supersingular is the same as supercuspidal (meaning it is not a subquotient of a representation induced from a proper parabolic subgroup).
Thus it forms the philosophical counterpart of Bernstein-Zelevinsky theory for $\GL_n$ in the complex case (\cite{BZ77}).
This finishes a long project initiated by Barthel and Livné in \cite{BL94}, who looked at $\GL_2$, and continued by Herzig (the case of $\GL_n$) and Abe (the split case) in \cite{Her11} and \cite{Abe13} respectively.

One feature of the emerging $p$-adic Langlands program (\cite{Bre10}) is a certain compatibility with the deformation theories on both sides (see \cite{Col10,Kis10,Pas13} for the established case $G=\GL_2(\Q_p)$).
In this paper we study deformations of representations $\bar{\pi}=\Ind_{P^-}^G \bar{\sigma}$ which are smoothly induced from an admissible representation $\bar{\sigma}$ of a Levi subgroup $L$.
Here $P=L \ltimes N$ is a parabolic subgroup with opposite $P^-$, and all these representations have coefficients in some fixed finite extension $k/\F_p$, which we take to be the residue field $k=\OO/(\varpi)$ of some fixed finite extension $E/\Q_p$.
Our hope is that our results will play a role in future developments of the $p$-adic Langlands program beyond the $\GL_2(\Q_p)$-case.

\medskip

The first part of our paper deals with deformations over Artinian rings, and forms the core of our argument.
Letting $\Art(\OO)$ denote the category of local Artinian $\OO$-algebras with residue field $k$, we consider the deformation functor $\Def_{\bar{\pi}}: \Art(\OO) \to \Set$ which takes $A \in \Art(\OO)$ to the set of equivalence classes of lifts $\pi$ of $\bar{\pi}$ over $A$.
Thus $\pi$ is a smooth $A[G]$-module, free over $A$, endowed with an isomorphism $\pi \otimes_A k \iso \bar{\pi}$.
Assuming $\End_{k[G]}(\bar{\pi})=k$, the functor $\Def_{\bar{\pi}}$ is known to be pro-representable, as recently shown by one of us (\cite{Sch13}).
To allow more flexibility one actually deforms the Pontrjagin dual $\bar{\pi}^{\vee} \coloneqq \Hom_k(\bar{\pi},k)$ which lives in a category of profinite augmented representations.
Let $R_{\bar{\pi}^{\vee}}$ be the universal deformation ring of $\bar{\pi}^{\vee}$ and $M_{\bar{\pi}^{\vee}}$ be the universal deformation of $\bar{\pi}^{\vee}$.
The duality transforms the parabolic induction functor into a functor $\Indd_{P^-}^G$ which yields a homomorphism $R_{\bar{\pi}^{\vee}}{\to } R_{\bar{\sigma}^{\vee}}$ of local profinite $\OO$-algebras.
The following is our main result.
Recall that a Banach lift of $\bar{\pi}$ is a unitary continuous $E$-Banach space representation of $G$ with a mod $\varpi$ reduction isomorphic to $\bar{\pi}$.

\begin{thm}\label{mainthm}
Let $\bar{\sigma}$ be an admissible smooth $k[L]$-module with $\End_{k[L]}(\bar{\sigma})=k$ and $\bar{\pi} \coloneqq \Ind_{P^-}^G \bar{\sigma}$.
If $F=\Q_p$, then assume that $\bar{\sigma}$ is supersingular and $\bar{\sigma}^\alpha \otimes (\bar{\varepsilon}^{-1} \circ \alpha) \not \simeq \bar{\sigma}$ for all $\alpha \in \Delta_L^{\perp,1}$.
Then the following hold.
\begin{enumerate}
\item $\Ind_{P^-}^G: \Def_{\bar{\sigma}}\to \Def_{\bar{\pi}}$ is an isomorphism of functors $\Art(\OO) \to \Set$.
\item Every Banach lift of $\bar{\pi}$ is induced from a unique Banach lift of $\bar{\sigma}$.
\item There is an isomorphism $R_{\bar{\pi}^{\vee}} \iso R_{\bar{\sigma}^{\vee}}$ through which $M_{\bar{\pi}^{\vee}} = \Indd_{P^-}^G (M_{\bar{\sigma}^{\vee}})$.
\item If $\dim_k \Ext_L^1(\bar{\sigma},\bar{\sigma}) < \infty$, then $R_{\bar{\pi}} \simeq R_{\bar{\sigma}}$ is Noetherian and there is an $R_{\bar{\pi}}[G]$-linear isomorphism $M_{\bar{\pi}} \simeq \Ind_{P^-}^G M_{\bar{\sigma}}$.
\end{enumerate}
\end{thm}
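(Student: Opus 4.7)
The plan is to reduce everything to part (1), which is the heart of the argument; parts (2), (3), and (4) then follow by relatively formal limit and duality manipulations. For (1), I would proceed by induction on the length of $A \in \Art(\OO)$ using small extensions $0 \to (\varepsilon) \to A' \twoheadrightarrow A \to 0$ with $\varepsilon \m_{A'} = 0$. The functor $\Ind_{P^-}^G$ is exact and preserves $A$-freeness, so it sends a lift $\sigma_{A'}$ of $\bar{\sigma}$ to a lift $\Ind_{P^-}^G \sigma_{A'}$ of $\bar{\pi}$. The candidate inverse is Emerton's ordinary parts functor $\Ord_P$, which is right adjoint to $\Ind_{P^-}^G$ on admissible representations, together with its derived functors $\HOrd^i_P$.

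The crucial input is that, under the genericity hypothesis on $\bar{\sigma}$, the unit $\bar{\sigma} \to \Ord_P(\Ind_{P^-}^G \bar{\sigma})$ is an isomorphism and $\HOrd^1_P(\Ind_{P^-}^G \bar{\sigma})$ is controlled — vanishing or at least free of contributions that would produce spurious deformations. The condition $\bar{\sigma}^\alpha \otimes (\bar{\varepsilon}^{-1} \circ \alpha) \not\simeq \bar{\sigma}$ for $\alpha \in \Delta_L^{\perp,1}$ is exactly what is needed to kill the obstructing summands of $\HOrd^1_P$ in the $F = \Q_p$ case, by the known description of these derived ordinary parts in terms of twists of $\bar{\sigma}$ by simple roots orthogonal to $\Delta_L$. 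Given a lift $\pi_{A'}$ of $\bar{\pi}$ over $A'$, I would set $\sigma_{A'} \coloneqq \Ord_P(\pi_{A'})$ and verify, by applying $\Ord_P$ and $\HOrd^1_P$ to $0 \to \bar{\pi} \to \pi_{A'} \to \pi_A \to 0$ and using a five-lemma argument together with the inductive hypothesis, that (a) $\sigma_{A'}$ is free over $A'$, (b) its reduction is $\bar{\sigma}$, and (c) the counit $\Ind_{P^-}^G \sigma_{A'} \to \pi_{A'}$ is an isomorphism. Injectivity of $\sigma_{A'} \mapsto \Ind_{P^-}^G \sigma_{A'}$ on equivalence classes of lifts then follows from adjunction combined with $\End_{k[L]}(\bar{\sigma}) = k$.

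Part (3) is then obtained by passing to the projective limit over Artinian quotients: the bijection of (1) applied at each finite stage identifies the representing pro-Artinian $\OO$-algebras $R_{\bar{\pi}^\vee}$ and $R_{\bar{\sigma}^\vee}$, and traces the universal object $M_{\bar{\sigma}^\vee}$ through $\Indd_{P^-}^G$ to $M_{\bar{\pi}^\vee}$. For (2), a Banach lift $\Pi$ of $\bar{\pi}$ has an open bounded $G$-stable unit ball $\Pi^\circ$ whose reductions $\Pi^\circ/\varpi^n$ form a compatible system of Artinian lifts of $\bar{\pi}$; by (1) these come from a unique compatible system of Artinian lifts $\Sigma_n$ of $\bar{\sigma}$, and $\Sigma \coloneqq \varprojlim_n \Sigma_n$ together with $\Sigma[1/p]$ produces the required Banach lift of $\bar{\sigma}$, with admissibility preserved by $\Ord_P$. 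Finally, (4) follows from (1) together with the standard fact that a pro-representable deformation functor whose tangent space $\Ext^1_L(\bar{\sigma},\bar{\sigma})$ is finite-dimensional is represented by a Noetherian complete local $\OO$-algebra; the $R_{\bar{\pi}}[G]$-linear identification $M_{\bar{\pi}} \simeq \Ind_{P^-}^G M_{\bar{\sigma}}$ is then obtained by Pontrjagin-dualizing the statement of (3).

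The main obstacle will clearly be step (1), and within it the analysis of $\HOrd^1_P(\Ind_{P^-}^G \bar{\sigma})$ under the hypothesis on $\Delta_L^{\perp,1}$; once this vanishing/control is available, the induction on length and the limit arguments in the remaining parts are essentially bookkeeping, modulo the standard subtleties of admissibility and topological freeness that accompany the passage between smooth representations and their Pontrjagin duals in the profinite augmented setting.
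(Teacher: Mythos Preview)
Your proposal is correct and follows essentially the same route as the paper: induction on the length of $A$ using $\Ord_P$ as the inverse, the computation of $\HOrd[1]_P(\Ind_{P^-}^G\bar{\sigma})$ together with the genericity hypothesis to kill the connecting map, and a five-lemma argument for the counit, followed by formal limit and duality arguments for (2)--(4). One small correction: the injectivity in (1) does not require $\End_{k[L]}(\bar{\sigma})=k$ --- it follows directly from the adjunction and the fact that the unit $\epsilon$ is an isomorphism; the hypothesis $\End_{k[L]}(\bar{\sigma})=k$ is only used for the pro-representability underlying (3) and (4).
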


For the unexplained notation, let $B \subset G$ be a minimal parabolic subgroup contained in $P$ and let $S \subset B$ be a maximal split torus contained in $L$.
We denote by $\Delta$ the simple roots of the triple $(G,B,S)$ and by $\Delta^1$ the subset consisting of roots whose corresponding root space is one-dimensional.
If $\Delta_L \subset \Delta$ denotes the set of simple roots of the triple $(L,B \cap L,S)$, its orthogonal complement $\Delta_L^{\perp}$ is the set of roots $\alpha \in \Delta$ for which $\langle \alpha,\beta^{\vee}\rangle=0$ for all $\beta \in \Delta_L$.
Then $\Delta_L^{\perp,1} \coloneqq \Delta_L^\perp \cap \Delta^1$.
Given $\alpha \in \Delta_L^{\perp,1}$, we can therefore consider the smooth $L$-representation $\bar{\sigma}^\alpha \otimes (\bar{\varepsilon}^{-1} \circ \alpha)$ over $k$ where $\bar{\sigma}^\alpha$ is the $s_\alpha$-conjugate of $\bar{\sigma}$ and $\bar{\varepsilon} : F^\times \to k^\times$ is the reduction mod $p$ of the $p$-adic cyclotomic character (see Subsection \ref{grp} for more details).

Implicit in (3) and (4) is the fact that the functor $\Def_{\bar{\pi}}$ is also pro-representable by $R_{\bar{\pi}} = R_{\bar{\pi}^\vee}$, and when the latter is Noetherian there exists a universal deformation $M_{\bar{\pi}} = M_{\bar{\pi}^\vee}^\vee$ which is a continuous representation of $G$ over $R_{\bar{\pi}}$.
In this context, $\Ind_{P^-}^G$ refers to the continuous parabolic induction functor.

We refer to Theorem \ref{thm:defind} and Corollaries \ref{cor:Ban}, \ref{univ}, \ref{noeth} in the main text for more precise statements.
In particular, the assumption that $\bar{\sigma}$ is supersingular (imposed when $F=\Q_p$) can be weakened, cf. Hypothesis \ref{hyp:HOrd} (and the pertaining remark) which only requires that $\Ord_Q \bar{\sigma}=0$ for certain proper parabolic subgroups $Q \subset L$.
Here $\Ord_Q$ denotes Emerton's ordinary parts functor (\cite{Eme10a}), which is the right adjoint of $\Ind_{Q^-}^L$.
Moreover, (1) and (2) hold true without the assumption $\End_{k[L]}(\bar{\sigma})=k$, e.g. in the following cases:
\begin{itemize}
\item $F \neq \Q_p$ and $\bar{\sigma}$ is any admissible smooth $k[L]$-module,
\item $F = \Q_p$ and $\bar{\sigma} \simeq \bar{\sigma}_1 \oplus \dots \oplus \bar{\sigma}_r$ where the direct summands are supersingular and satisfy $\bar{\sigma}_i \not \simeq \bar{\sigma}_j^\alpha \otimes (\bar{\varepsilon}^{-1} \circ \alpha)$ for any $1 \leq i,j \leq r$ and any $\alpha \in \Delta_L^{\perp,1}$.
\end{itemize}

The genericity condition in Theorem \ref{mainthm} is rather mild (see Example \ref{exmp:gen} below), and it is sharp: for $G=\GL_2(\Q_p)$ the so-called ``atomes automorphes'' of length $2$ (\cite{Col10}) provide counter-examples of $\bar{\pi} = \Ind_{B^-}^G \bar{\chi}_1 \bar{\varepsilon}^{-1} \otimes \bar{\chi}_2 \oplus \Ind_{B^-}^G \bar{\chi}_2 \bar{\varepsilon}^{-1} \otimes \bar{\chi}_1$ (where $\bar{\chi}_1,\bar{\chi}_2 : \Q_p^\times \to k^\times$ are smooth characters) admitting topologically irreducible Banach lifts $\pi$ (thus not a direct sum of two unitary continuous principal series).

\begin{exmp} \label{exmp:gen}
If $G=\GL_n(\Q_p)$ and $L$ is a block-diagonal subgroup, then $\Delta^1=\Delta$ and the roots in $\Delta_L^\perp$ correspond to pairs of consecutive $\Q_p^\times$-factors of $L$.
Thus for each $\alpha \in \Delta_L^\perp$, one has a factorization as a direct product $L \simeq L' \times \Q_p^\times \times \Q_p^\times \times L''$, and if correspondingly $\bar{\sigma}$ decomposes as a tensor product $\bar{\sigma} \simeq \bar{\sigma}' \otimes \bar{\chi}_1 \otimes \bar{\chi}_2 \otimes \bar{\sigma}''$, then $\bar{\sigma}^\alpha \otimes (\bar{\varepsilon}^{-1} \circ \alpha) \simeq \bar{\sigma}' \otimes \bar{\chi}_2 \bar{\varepsilon}^{-1} \otimes \bar{\chi}_1 \bar{\varepsilon} \otimes \bar{\sigma}''$ so that the genericity condition becomes $\bar{\chi}_1 \bar{\chi}_2^{-1} \neq \bar{\varepsilon}^{-1}$.
\end{exmp}

In the principal series case, the result takes a more concrete form.
So assume $G$ is quasi-split and specialize to the case where $P=B$ is a Borel subgroup.
Then the Levi factor $L=T$ is a $p$-adic torus.
Let $T^{(p)} \coloneqq \varprojlim_j T/T^{p^j}$ denote the $p$-adic completion of $T$ and denote by $\Lambda \coloneqq \OO[[T^{(p)}]]$ its completed group algebra.

\begin{cor}
Let $\bar{\chi}: T \to k^\times$ be a smooth character and $\bar{\pi} \coloneqq \Ind_{B^-}^G \bar{\chi}$.
If $F=\Q_p$, then assume that $s_\alpha(\bar{\chi}) \cdot (\bar{\varepsilon}^{-1} \circ \alpha) \neq \bar{\chi}$ for all $\alpha \in \Delta^1$.
Then $R_{\bar{\pi}} \simeq \Lambda$ is Noetherian and $M_{\bar{\pi}} \simeq \Ind_{B^-}^G \chi^\univ$.
\end{cor}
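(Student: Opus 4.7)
The plan is to specialize Theorem \ref{mainthm}(4) to $P = B$, $L = T$ and $\bar{\sigma} = \bar{\chi}$, and then to identify the universal deformation ring of the character $\bar{\chi}$ explicitly with $\Lambda$. The character $\bar{\chi}$ is admissible and satisfies $\End_{k[T]}(\bar{\chi}) = k$ trivially. Since $T$ contains no proper parabolic subgroup, $\bar{\chi}$ is vacuously supersingular. Because $\Delta_T = \emptyset$ one has $\Delta_T^{\perp,1} = \Delta^1$, and for a one-dimensional $\bar{\chi}$ the $s_\alpha$-conjugate is simply $\bar{\chi}^\alpha = s_\alpha(\bar{\chi})$, so the genericity condition of the theorem reduces exactly to the hypothesis $s_\alpha(\bar{\chi}) \cdot (\bar{\varepsilon}^{-1} \circ \alpha) \neq \bar{\chi}$ imposed in the corollary.

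Next I will describe $\Def_{\bar{\chi}}(A)$ explicitly for $A \in \Art(\OO)$. Using the Teichmüller embedding $k^\times \hookrightarrow \OO^\times$ one obtains a canonical lift $[\bar{\chi}]: T \to A^\times$ of $\bar{\chi}$, and every lift $\chi$ can be written uniquely as $\chi = [\bar{\chi}] \cdot \eta$ with $\eta: T \to 1 + \m_A$ a smooth homomorphism. Since $p \in \m_A$ and $\m_A$ is nilpotent, $1 + \m_A$ has $p$-power exponent, so $\eta$ factors through the pro-$p$ completion $T^{(p)}$ and corresponds to a unique continuous $\OO$-algebra map $\Lambda = \OO[[T^{(p)}]] \to A$. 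This identifies $\Def_{\bar{\chi}}(A)$ with the set of continuous $\OO$-algebra homomorphisms $\Lambda \to A$, whence $R_{\bar{\chi}} \simeq \Lambda$ with universal character $\chi^\univ$.

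Finally I will verify the finiteness condition required for part (4). The group $\Ext^1_T(\bar{\chi},\bar{\chi})$ is computed by smooth group cohomology as $\Hom_\cont(T, k) \simeq \Hom_\cont(T^{(p)}, k)$, which is finite-dimensional over $k$ because $T^{(p)}$ is a finitely generated $\Z_p$-module. Theorem \ref{mainthm}(4) then yields $R_{\bar{\pi}} \simeq R_{\bar{\chi}} \simeq \Lambda$, which is Noetherian, together with $M_{\bar{\pi}} \simeq \Ind_{B^-}^G M_{\bar{\chi}} = \Ind_{B^-}^G \chi^\univ$. No step presents a real obstacle: the main theorem carries all the difficulty, and the identification $R_{\bar{\chi}} \simeq \Lambda$ is the classical description of the universal deformation ring of a smooth character of a $p$-adic abelian group.
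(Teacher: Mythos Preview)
Your proposal is correct and follows essentially the same approach as the paper: specialize the main theorem (in the form of Corollary~\ref{noeth}) to $P=B$, $L=T$, $\bar\sigma=\bar\chi$, check that condition (a) is vacuous and condition (b) is the stated hypothesis, and identify $R_{\bar\chi}\simeq\Lambda$ with universal lift $\chi^{\univ}$. The only cosmetic difference is that the paper isolates the identification $R_{\bar\chi}\simeq\Lambda$ as a separate result (Proposition~\ref{char}) and records the explicit dimension of $\Ext^1_T(\bar\chi,\bar\chi)$ just before the corollary, whereas you inline both arguments.
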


Here, $\chi^\univ: T \to \Lambda^\times$ denotes the universal deformation of $\bar{\chi}$.

\medskip

Let us briefly sketch the further content of this paper.
In the second part, we study the deformation theory of parabolic induction over complete local Noetherian rings by passing the Artinian theory, so to speak, to the limit.
On the way, we establish several properties of the continuous parabolic induction functor.
The dimension of the tangent space $\Ext_L^1(\bar{\sigma},\bar{\sigma})$ of a deformation ring of type $R_{\bar{\sigma}^{\vee}}$ is not easily accessible.
For example it is not known whether it is finite-dimensional for a supersingular representation $\bar{\sigma}$, except when $L$ is a torus or $L=\GL_2(\Q_p)$ (\cite{Pas10}).
This forces us to go one step further and work over quite arbitrary profinite rings.
This forms the topic of the third part which contains the main result and its proof.

\medskip

The origin of this article is a paper of one of us (\cite{Sor15}) which dealt with the case of principal series using the calculations of \cite{Hau16a}.
Meanwhile, these calculations were generalized in \cite{Hau16b} and the three authors decided to extend the results of the original paper in order to treat the general case.
Since the first version of this article, some calculations have been carried over a base field of characteristic $p$ (\cite{Hau17}) allowing our main result to be generalized \emph{verbatim} to any non-archimedean local field $F$ of residue characteristic $p$.
Finally, we point out a sequel to this article (\cite{HSS17}) in which we compute the deformations of generalized Steinberg representations.

\subsection{Notation and conventions}

\subsubsection{Coefficient algebras}

Throughout the paper we fix a finite extension $E/\Q_p$ which will serve as our coefficient field.
We denote its integer ring by $\OO$ and we fix a uniformizer $\varpi \in \OO$.
The residue field $k=\OO/\varpi\OO$ is a finite field of cardinality $q$.
The normalized absolute value on $E$ is denoted by $|\cdot|$; thus $|\varpi|=q^{-1}$.
We write $\bar{\varepsilon} : \Q_p^\times \to \F_p^\times \subset k$ for the reduction mod $p$ of the $p$-adic cyclotomic character.

We write $\Art(\OO)$ for the category whose objects are local Artinian $\OO$-algebras $A$ (with the discrete topology) for which the structure map $\OO \to A$ is local and induces an isomorphism $k \iso A/\m_A$.
The morphisms $A \to A'$ are the (local) $\OO$-algebra homomorphisms.
Note that $k$ is a terminal object of $\Art(\OO)$.

\begin{rem} \label{length}
Note that any $A \in \Art(\OO)$ has finite $\OO$-length.
In fact the $\OO$-length $\ell_{\OO}(A)$ equals the $A$-length $\ell_A(A)$: any simple $A$-module is isomorphic to $A/\m_A\simeq k$ and, hence, is a simple $\OO$-module.
So a composition series for $A$ as an $A$-module is a composition for $A$ as an $\OO$-module.
We will drop subscripts and just write $\ell(A)$.
\end{rem}

We write $\Noe(\OO)$ for the category whose objects are Noetherian complete local $\OO$-algebras $A$ for which the structural morphism $\OO \to A$ is local and induces an isomorphism $k \iso A/\m_A$.
The morphisms $A \to A'$ are the local $\OO$-algebra homomorphisms.
Note that $\Art(\OO)$ is the full subcategory of $\Noe(\OO)$ consisting of Artinian rings, and that $A/\m_A^n \in \Art(\OO)$ for all $n \geq 1$ when $A \in \Noe(\OO)$.

We write $\Pro(\OO)$ for the category whose objects are local profinite $\OO$-algebras $A$ for which the structure map $\OO \to A$ is local and induces an isomorphism $k \iso A/\m_A$.
The morphisms $A \to A'$ are the continuous local $\OO$-algebra homomorphisms.
Note that $\Art(\OO)$ is the full subcategory of $\Pro(\OO)$ consisting of Artinian rings, and that $A/\af \in \Art(\OO)$ when $A \in \Pro(\OO)$ and $\af \subset A$ is an open ideal.
Furthermore, $\Pro(\OO)$ is equivalent to the category of pro-objects of $\Art(\OO)$ (cf. \cite[Lem.~3.3]{Sch13}).
If $A \in \Pro(\OO)$ is Noetherian, then the profinite topology is the $\m_A$-adic topology (\cite[Prop.~22.5]{Sch11}).
Moreover, a morphism $A \to A'$ between Noetherian rings in $\Pro(\OO)$ is continuous if and only if it is local.
Thus $\Noe(\OO)$ is the full subcategory of $\Pro(\OO)$ consisting of Noetherian rings.\footnote{To give a simple example, the ring of dual numbers $k[\{\epsilon_i\}_{i\in\N}]$ in an infinite number of parameters $\epsilon_i$, where $\epsilon_i\epsilon_j=0$ for all $i,j$,  equals the inverse limit over the finite rings $k[\epsilon_1,...,\epsilon_n]$, i.e. lies in $\Pro(\OO)$, but it is not Noetherian. Another example is the ring of formal power series over $\OO$ in countably infinitely many indeterminates $X_1,X_2,X_3\dots$.}

\subsubsection{Reductive $p$-adic groups} \label{grp}

We fix a finite extension $F/\Q_p$ which will serve as our base field.
We let $G$ be a connected reductive group over $F$.
By abuse of notation, instead of $G(F)$ we simply write $G$.
The same convention applies to other linear algebraic $F$-groups.

We choose a minimal parabolic subgroup $B \subset G$ and a maximal split torus $S \subset B$.
We let $\ZZ$ be the centralizer of $S$ in $G$, $\NN$ be its normalizer, and $W=\ZZ/\NN$ be the Weyl group of $(G,S)$.
We let $\Delta$ denote the set of simple roots of the triple $(G,B,S)$.
For $\alpha \in \Delta$ we denote by $\gf_{(\alpha)} \coloneqq \gf_\alpha \oplus \gf_{2\alpha}$ the corresponding subspace in the Lie algebra of $G$ (with the convention that $\gf_{2\alpha}=0$ if $2\alpha$ is not a root).
We put
\begin{equation*}
\Delta^1 \coloneqq \left\{ \alpha \in \Delta \middlevert \dim_F \gf_{(\alpha)} = 1 \right\}.
\end{equation*}
We have $\Delta^1=\Delta$ if $G$ is split, but not in general (even if the root system of $(G,S)$ is reduced, e.g. $\Delta^1=\varnothing$ if $G = \Res_{F'/F} G'$ with $F'/F$ strict).

We fix a standard parabolic subgroup $P \supset B$ and let $L \supset S$ be the standard Levi factor.
We denote by $P^-$ the opposite parabolic with respect to $L$, i.e. the unique parabolic subgroup such that $P \cap P^-=L$, and we write $Z_L$ for the center of $L$.
Similarly we let $\Delta_L \subset \Delta$ denote the set of simple roots of the triple $(L,B \cap L,S)$ and its orthogonal complement $\Delta_L^{\perp}$ is the set of roots $\alpha \in \Delta$ for which $\langle \alpha,\beta^{\vee}\rangle=0$ for all $\beta \in \Delta_L$.
For example $\Delta_G^{\perp}=\varnothing$ and at the other extreme $\Delta_\ZZ^{\perp}=\Delta$.

Finally, we put $\Delta_L^{\perp,1} \coloneqq \Delta_L^\perp \cap \Delta^1$.
For $\alpha \in \Delta_L^{\perp,1}$, conjugation by a representative $n_\alpha \in \NN$ of the corresponding simple reflection $s_\alpha \in W$ stabilizes $L$, and $\alpha$ extends (uniquely) to an algebraic character of $L$ (cf. the proof of \cite[Lem.~5.1.4]{Hau16b}).
Therefore if $\bar{\sigma}$ is a smooth $k[L]$-module and $F=\Q_p$, we can consider the smooth $k[L]$-module $\bar{\sigma}^\alpha \otimes (\bar{\varepsilon}^{-1} \circ \alpha)$ where $\bar{\sigma}^\alpha$ is the $s_\alpha$-conjugate of $\bar{\sigma}$ (i.e. $\bar{\sigma}^\alpha$ has the same underlying $k$-vector space as $\bar{\sigma}$, but $l \in L$ acts on $\bar{\sigma}^\alpha$ as $n_\alpha l n_\alpha^{-1}$ acts on $\bar{\sigma}$), which does not depend on the choice of $n_\alpha$ in $n_\alpha \ZZ$ up to isomorphism (since $\ZZ \subset L$).

\begin{exmp}
We find it instructive to unravel the notation in the case of $G=\GL_n(\Q_p)$, where we take $B$ to be the upper-triangular matrices and $S$ to be the diagonal matrices.
In this case $\Delta = \{ e_i-e_{i+1} : 1 \leq i < n \}$ where $e_i : S \to \Q_p^\times$ is the algebraic character defined by $t = \diag(t_1,\dots,t_n) \mapsto t_i$.

Then $L \simeq \GL_{n_1}(\Q_p) \times \dots \times \GL_{n_r}(\Q_p)$ is a block-diagonal subgroup, and $\Delta_L = \{ e_i-e_{i+1} : n_1+\dots+n_j+1 \leq i < n_1+\dots+n_{j+1} \text{ for some } 0 \leq j < r \}$.
Loosely speaking the roots in $\Delta \backslash \Delta_L$ give the consecutive ratios where the blocks of $L$ meet.

The roots in $\Delta_L^{\perp}$ correspond to pairs of consecutive $\Q_p^\times$-factors of $L$: $\alpha \in \Delta_L^\perp$ if and only if $\alpha=e_i-e_{i+1}$ with $0 \leq i <n$ such that $n_i=n_{i+1}=1$.
In this case, conjugation by $s_\alpha$ permutes the corresponding two copies of $\Q_p^\times$ and $\alpha$ extends to an algebraic character $L \to \Q_p^\times$ giving the ratio between them.

Therefore if $\bar{\sigma}$ is a smooth $k[L]$-module such that $\bar{\sigma} \simeq \bar{\sigma}_1 \otimes \dots \otimes \bar{\sigma}_r$ where each $\bar{\sigma}_j$ is a smooth $k[\GL_{n_j}(\Q_p)]$-module, and $\alpha=e_i-e_{i+1} \in \Delta_L^\perp$, then $\bar{\sigma}^\alpha \otimes (\bar{\varepsilon}^{-1}\circ \alpha) \simeq \bar{\sigma}_1 \otimes \dots \otimes \bar{\sigma}_{i+1} \bar{\varepsilon}^{-1} \otimes \bar{\sigma}_i \bar{\varepsilon} \otimes \dots \otimes \bar{\sigma}_r$.
\end{exmp}

\section{Parabolic induction and deformations over Artinian rings}

\subsection{Smooth parabolic induction}

Let $A \in \Art(\OO)$.
We consider the category of all $A[G]$-modules $\Mod_G(A)$, and its full abelian subcategory of smooth representations $\Mod_G^\infty(A)$.
Recall that $\pi$ is smooth if $\pi = \bigcup_K \pi^K$ where $K \subset G$ ranges over compact open subgroups.
We say that $\pi$ is admissible if each $\pi^K$ is a finitely generated $A$-module.
The admissible representations form a Serre subcategory $\Mod_G^{\adm}(A)$ (cf. \cite[Prop.~2.2.13]{Eme10a}).
Finally we let $\Mod_G^\infty(A)^\fl$ be the full subcategory of $\Mod_G^\infty(A)$ consisting of objects free over $A$.

The parabolic induction of a smooth $A[L]$-module $\sigma$ is defined as follows.
First inflate $\sigma$ via the projection $P^- \twoheadrightarrow L$ and let
\begin{equation*}
\Ind_{P^-}^G \sigma \coloneqq \left\{ \text{smooth $f : G \to \sigma$} \middlevert \text{$f(pg) = pf(g)$ for all $p \in P^-$ and $g \in G$} \right\}.
\end{equation*}
Smoothness of $f$ means continuous relative to the discrete topology on $\sigma$ (i.e. locally constant).
Thus $\Ind_{P^-}^G \sigma$ becomes a smooth $A[G]$-module via right translations, and this defines an $A$-linear functor
\begin{equation*}
\Ind_{P^-}^G: \Mod_L^\infty(A) \longrightarrow \Mod_G^\infty(A)
\end{equation*}
which is exact, commutes with small direct sums and preserves admissibility (cf. Lem.~4.1.4 and Prop.~4.1.5 in \cite{Eme10a}).

\begin{lem} \label{lem:Ind}
Let $\sigma$ be a smooth $A[L]$-module.
\begin{enumerate}
\item $\Ind_{P^-}^G \sigma$ is free over $A$ if and only if $\sigma$ is free over $A$.
\item For any morphism $A \to A'$ in $\Art(\OO)$, there is a natural $A'[G]$-linear isomorphism
\begin{equation*}
\left( \Ind_{P^-}^G \sigma \right) \otimes_A A' \iso \Ind_{P^-}^G \left( \sigma \otimes_A A' \right).
\end{equation*}
\end{enumerate}
\end{lem}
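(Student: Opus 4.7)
The plan is to establish part (2) first via a presentation argument, and then deduce part (1) through a Tor comparison.

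For (2), I would define the natural $A'[G]$-linear morphism
\[
\phi\colon (\Ind_{P^-}^G \sigma)\otimes_A A'\longrightarrow \Ind_{P^-}^G(\sigma\otimes_A A'),\qquad f\otimes a'\longmapsto \bigl(g\mapsto f(g)\otimes a'\bigr).
\]
To check $\phi$ is an isomorphism I would choose a presentation $A^{\oplus m}\to A^{\oplus n}\to A'\to 0$ of $A'$ as an $A$-module. Applying the right-exact functor $(\Ind_{P^-}^G \sigma)\otimes_A -$ on one side, and the composite $\Ind_{P^-}^G(\sigma\otimes_A -)$ on the other, yields two parallel right-exact sequences whose first two terms are both $(\Ind_{P^-}^G \sigma)^{\oplus m}$ and $(\Ind_{P^-}^G \sigma)^{\oplus n}$; here I use the exactness of $\Ind_{P^-}^G$ together with its commutation with finite direct sums, both recalled above. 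Naturality of $\phi$ makes the comparison map the identity on the first two terms, so $\phi$ is the induced isomorphism between the cokernels. The identical argument applies verbatim with any $A$-module $M$ in place of $A'$, producing a natural isomorphism $(\Ind_{P^-}^G \sigma)\otimes_A M \iso \Ind_{P^-}^G(\sigma\otimes_A M)$, where $\sigma\otimes_A M$ carries its natural smooth $A[L]$-action via the first factor.

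For (1), I would apply this extended form of (2) to the short exact sequence $0\to\m_A\to A\to k\to 0$. Combined with the exactness of $\Ind_{P^-}^G$, this identifies the kernel of $(\Ind_{P^-}^G \sigma)\otimes_A \m_A\to \Ind_{P^-}^G \sigma$ with $\Ind_{P^-}^G$ of the kernel of $\sigma\otimes_A \m_A\to\sigma$, i.e.
\[
\operatorname{Tor}_1^A\bigl(\Ind_{P^-}^G \sigma,k\bigr)\;\cong\;\Ind_{P^-}^G\bigl(\operatorname{Tor}_1^A(\sigma,k)\bigr).
\]
Since $A$ is Artinian local, an $A$-module is free if and only if it is flat if and only if its $\operatorname{Tor}_1^A(-,k)$ vanishes (Bass and Kaplansky), so the displayed isomorphism reduces part (1) to showing that $\Ind_{P^-}^G$ is conservative on $\Mod_L^\infty(A)$. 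This follows from an explicit bump-function construction: given a nonzero smooth $A[L]$-module $\tau$ and $0\neq s\in\tau$, choose a compact open subgroup $K\subset G$ so small that the image of $P^-\cap K$ in $L$ fixes $s$, and set $f(pk)\coloneqq p\cdot s$ on the open subset $P^-K\subset G$ and $f\equiv 0$ elsewhere; then $f\in\Ind_{P^-}^G\tau$ is nonzero because $f(1)=s\neq 0$.

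The delicate step is verifying that this bump function is well-defined and smooth (requiring $P^-K$ to be open in $G$ and the prescription $pk\mapsto p\cdot s$ to be single-valued, which follows from the choice of $K$); the remaining inputs---naturality of $\phi$, smoothness of $\sigma\otimes_A M$, and smoothness of the submodule $\operatorname{Tor}_1^A(\sigma,k)\subset\sigma\otimes_A\m_A$---are routine consequences of the smoothness of $\sigma$.
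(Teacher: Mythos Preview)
Your proof is correct but takes a genuinely different route from the paper's. The paper exploits a single structural isomorphism
\[
\Ind_{P^-}^G \sigma \;\simeq\; \CC^\infty(P^-\backslash G,A)\otimes_A\sigma,
\]
obtained from a continuous section of $G\twoheadrightarrow P^-\backslash G$ together with the observation that a smooth function into a discrete module takes only finitely many values. From this, part (2) is immediate by associativity of the tensor product, and part (1) follows because $\CC^\infty(P^-\backslash G,A)$ is a direct limit of finite free $A$-modules, hence flat, hence free over the local ring $A$; for the converse direction the paper simply notes that $\sigma$ is a direct summand of $\Ind_{P^-}^G\sigma$ and invokes Kaplansky.

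Your argument avoids this tensor decomposition entirely: for (2) you run a finite-presentation comparison (legitimate since $A'$ has finite $\OO$-length and is therefore finitely presented over the Noetherian ring $A$), and for (1) you deduce $\operatorname{Tor}_1^A(\Ind_{P^-}^G\sigma,k)\cong\Ind_{P^-}^G\operatorname{Tor}_1^A(\sigma,k)$ and then establish conservativity of $\Ind_{P^-}^G$ via an explicit bump function. This is a bit longer but more portable: it does not rely on the section of $G\to P^-\backslash G$ or on the finite-image property of smooth functions, and the Tor identity treats both implications of (1) at once. One small caveat: your claim that the presentation argument applies ``verbatim with any $A$-module $M$'' would require commutation of $\Ind_{P^-}^G$ with \emph{infinite} direct sums when $M$ is not finitely generated (this is in fact true, as recalled just before the lemma), but for your application only $M=\m_A$ is needed, which is finitely generated.
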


\begin{proof}
There are natural $A$-linear isomorphisms (cf. \cite[(6)]{Vig16})
\begin{equation} \label{CC}
\Ind_{P^-}^G \sigma \simeq \CC^\infty \left( P^-\backslash G, \sigma \right) \simeq \CC^\infty \left( P^- \backslash G, A \right) \otimes_A \sigma
\end{equation}
(the first one is induced by composition with a continuous section of the projection $G \twoheadrightarrow P^- \backslash G$, and the second one follows from the fact that a smooth function $f : P^- \backslash G \to \sigma$ takes only finitely many values).
Furthermore, $\CC^\infty(P^-\backslash G,A)$ is a direct limit of finite-free $A$-modules
\begin{equation*}
\CC^\infty \left( P^- \backslash G, A \right) \simeq \varinjlim_K \CC^\infty \left( P^- \backslash G/K, A \right)
\end{equation*}
where $K \subset G$ runs through the compact open subgroups, thus it is flat and therefore free.
In conjunction with \eqref{CC}, this immediately shows that $\Ind_{P^-}^G \sigma$ is free over $A$ if $\sigma$ is.
For the converse note that $\sigma$ is a direct summand of $\Ind_{P^-}^G \sigma$.
If the latter is free $\sigma$ is projective, which is the same as free over local rings (by Kaplansky's theorem).
This shows (1).
For the proof of (2), note that the natural map defined by $f \otimes a' \mapsto (g \mapsto f(g) \otimes a')$ is $A[G]$-linear and using \eqref{CC} twice (with $\sigma$ and $\sigma \otimes_A A'$), we see that it is bijective (both sides being naturally isomorphic to $\CC^\infty(P^- \backslash G, A) \otimes_A (\sigma \otimes_A A')$).
\end{proof}

\subsection{Ordinary parts}

For convenience we briefly recall the basic properties of Emerton's functor of ordinary part $\Ord_P$, which is somewhat analogous to (but better behaved than) the locally analytic Jacquet functor $J_P$.
Let $A \in \Art(\OO)$.
We let $\Mod_L^{\infty,Z_L-\lfin}(A)$ be the full subcategory of locally $Z_L$-finite\footnote{I.e. the $A[Z_L]$-submodule generated by any element is of finite type over $A$ (cf. \cite[Def.~3.2.1 (3)]{Eme10a}).} smooth $A[L]$-modules.
Then the functor of ordinary part is an $A$-linear functor
\begin{equation*}
\Ord_P : \Mod_G^\infty(A) \to \Mod_L^{\infty,Z_L-\lfin}(A)
\end{equation*}
which is left-exact, commutes with small inductive limits and preserves admissibility (cf. Prop.~3.2.4 and Thm.~3.3.3 in \cite{Eme10a}).
Furthermore, it is related to the smooth parabolic induction functor by the following result.

\begin{thm}[Emerton] \label{thm:Ord}
Let $\sigma$ be a locally $Z_L$-finite smooth $A[L]$-module.
\begin{enumerate}
\item There is a natural $A[L]$-linear isomorphism
\begin{equation*}
\sigma \iso \Ord_P \left( \Ind_{P^-}^G \sigma \right).
\end{equation*}
\item For any smooth $A[G]$-module $\pi$, $\Ord_P$ induces an $A$-linear isomorphism
\begin{equation*}
\Hom_{A[G]} \left( \Ind_{P^-}^G \sigma, \pi \right) \iso \Hom_{A[L]} \left( \sigma, \Ord_P \pi \right).
\end{equation*}
\end{enumerate}
\end{thm}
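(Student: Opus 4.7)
The plan is to invoke Emerton's adjunction theorem from \cite[\S 4]{Eme10a}, of which this is essentially the Artinian-coefficient version. Part (1) constructs the unit of adjunction and is the technical heart of the argument; part (2) then follows formally.

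For (1), I would build the natural map using that the big cell $P^- N \subset G$ is open. For $v \in \sigma$, define $f_v : G \to \sigma$ supported on $P^- N$ by $f_v(p^- n) \coloneqq p^- \cdot v$ for $p^- \in P^-$ and $n \in N$; this is well-defined since $P^- \cap N = \{1\}$, and smoothness of $\sigma$ makes $f_v$ locally constant, so $f_v \in \Ind_{P^-}^G \sigma$. If $N_0 \subset N$ is a compact open subgroup fixing $v$, then $f_v \in (\Ind_{P^-}^G \sigma)^{N_0}$, and a direct computation shows that the Hecke action of a strictly positive element of $Z_L$ on this space pulls back via $v \mapsto f_v$ to the $L$-action on $\sigma$. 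Hence the image lies in the locally $Z_L$-finite part of $\varinjlim_{N_0}(\Ind_{P^-}^G \sigma)^{N_0}$, i.e.\ in $\Ord_P(\Ind_{P^-}^G \sigma)$, and the resulting map is $L$-equivariant. Injectivity is immediate by evaluating at $1 \in G$. For surjectivity, one uses the locally $Z_L$-finite hypothesis on $\sigma$ together with the fact that iterating the action of a strictly positive $z \in Z_L$ concentrates any $Z_L$-finite $N_0$-invariant function on the big cell, where it must be of the form $f_v$ for some unique $v \in \sigma$.

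Once (1) is in hand, part (2) is formal. The forward map sends $\phi \colon \Ind_{P^-}^G \sigma \to \pi$ to $\Ord_P(\phi) \circ \eta_\sigma$, where $\eta_\sigma$ is the unit from (1). Conversely, any $\psi \colon \sigma \to \Ord_P \pi$ extends uniquely to an $A[G]$-linear map $\Ind_{P^-}^G \sigma \to \pi$ by sending $f_v \mapsto \psi(v)$ on the big cell and then extending by $G$-equivariance, using that $\Ord_P \pi$ embeds canonically into $\pi^N$ compatibly with the Hecke structure. The main obstacle is therefore the Hecke-theoretic surjectivity in (1), which relies on the combinatorics of the double cosets $P^- \backslash G / N_0$ and is worked out in detail by Emerton; the passage from the residue field $k$ to a general $A \in \Art(\OO)$ is harmless since all constructions involved are $A$-linear and the relevant free-module statements were already handled in Lemma \ref{lem:Ind}.
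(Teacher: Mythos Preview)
The paper does not reprove this; it cites \cite[Prop.~4.3.4]{Eme10a} for (1) and \cite[Thm.~4.4.6]{Eme10a} for (2), observing that Emerton states (2) only for \emph{admissible} $\sigma$ and that the extension to locally $Z_L$-finite $\sigma$ is supplied by Vign\'eras \cite[Thm.~6.1]{Vig16}. Your proposal instead tries to sketch Emerton's underlying argument, which is a different and more ambitious route, but there are two genuine gaps.

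First, the function $f_v$ supported on the full big cell $P^- N$ is \emph{not} an element of $\Ind_{P^-}^G \sigma$: the big cell is open but not closed in $G$, and $f_v$ is not locally constant at boundary points (already for $G=\GL_2$ with $\sigma$ a character, the values of $f_v$ do not tend to zero as $g_{11}\to 0$). Emerton's canonical lift is supported on $P^- N_0$ for a fixed \emph{compact} open $N_0 \subset N$; this set is clopen in $G$ because its image in the compact space $P^- \backslash G$ is both open (big cell) and compact.

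Second, and more to the point, (2) does not follow formally from (1). Knowing that the candidate unit is an isomorphism does not produce an inverse to $\phi \mapsto \Ord_P(\phi) \circ \eta_\sigma$; for each $\psi : \sigma \to \Ord_P \pi$ one must actually construct a $G$-map $\Ind_{P^-}^G \sigma \to \pi$, and ``extend by $G$-equivariance from the big cell'' is not a construction (the $f_v$'s do not span $\Ind_{P^-}^G \sigma$, and $\Ord_P \pi$ sits inside $\pi^{N_0}$ with the Hecke $L$-action, not the naive one). This construction of the inverse is exactly where Emerton invokes admissibility of $\sigma$ via \cite[Lem.~4.4.3]{Eme10a} to get surjectivity of his map (4.4.7), and the entire content of the paper's proof is the observation that Vign\'eras establishes this surjectivity under the weaker locally $Z_L$-finite hypothesis. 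Your sketch does not engage with this step at all.
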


\begin{proof}
This is Prop.~4.3.4 and essentially Thm.~4.4.6 in \cite{Eme10a} respectively, except that in op. cit. the latter is only formulated for an admissible representation $\sigma$.
However, the proof only uses the admissibility hypothesis when invoking \cite[Lem.~4.4.3]{Eme10a} in order to prove that (4.4.7) of op. cit. is surjective.
But this fact is proved by Vignéras for a merely locally $Z_L$-finite representation $\sigma$ (cf. the proof of \cite[Thm.~6.1]{Vig16}).
\end{proof}

In other words, $\Ord_P$ is a left quasi-inverse and the right adjoint of $\Ind_{P^-}^G$.
Since both functors respect admissibility, their restrictions to the corresponding full subcategories of admissible representations (note that an admissible representation of $L$ is locally $Z_L$-finite by \cite[Lem.~2.3.4]{Eme10a}) are still adjoint to each other.

\begin{rem} \label{rem:R}
Vignéras (\cite{Vig16}) proved that the functor $\Ind_{P^-}^G$ between the categories of smooth representations also admits a right adjoint $\operatorname{R}_{P^-}^G$.
When restricted to the categories of admissible representations, $\operatorname{R}_{P^-}^G$ is isomorphic to $\Ord_P$ (\cite[Cor.~4.13]{AHV17}).
\end{rem}

We rephrase the adjunction relation between $\Ind_{P^-}^G$ and $\Ord_P$ in a more formal but equivalent way (cf. \cite[Prop.~1.5.4]{KS06}), which will be convenient for the proofs of the results in Section \ref{id}.
We use the isomorphisms in Theorem \ref{thm:Ord}:
For any object $\sigma$ of $\Mod_L^\adm(A)$ and $\pi \coloneqq \Ind_{P^-}^G \sigma$, the natural $A[L]$-linear isomorphism in (1) is the image $\epsilon_\sigma$ of $\id_\pi$ under the isomorphism in (2).
Likewise for any object $\pi$ of $\Mod_G^\adm(A)$ and $\sigma \coloneqq \Ord_P \pi$, the preimage $\eta_\pi$ of $\id_\sigma$ under the isomorphism in (2) is a natural $A[G]$-linear morphism $\Ind_{P^-}^G \Ord_P \pi \to \pi$.
Hence, there are natural transformations (called the unit and counit respectively)
\begin{gather*}
\epsilon : \id_{\Mod^\adm_L(A)} \iso \Ord_P \Ind_{P^-}^G \\
\eta : \Ind_{P^-}^G \Ord_P \longrightarrow \id_{\Mod^\adm_G(A)}
\end{gather*}
which satisfy the following equalities (cf. (1.5.8) and (1.5.9) in \cite{KS06})
\begin{gather}
\left( \eta \Ind_{P^-}^G \right) \left( \Ind_{P^-}^G \epsilon \right) = \id_{\Ind_{P^-}^G} \label{eqadj} \\
\left( \Ord_P \eta \right) \left( \epsilon \Ord_P \right) = \id_{\Ord_P} \notag
\end{gather}
(the compositions inside the parentheses are compositions of natural transformations with functors, whereas the compositions outside the parentheses are ``vertical compositions'' of natural transformations, so that they yield composites of natural transformations
\begin{gather*}
\Ind_{P^-}^G \iso \Ind_{P^-}^G \Ord_P \Ind_{P^-}^G \longrightarrow \Ind_{P^-}^G \\
\Ord_P \iso \Ord_P \Ind_{P^-}^G \Ord_P \longrightarrow \Ord_P
\end{gather*}
respectively).

\subsection{Higher ordinary parts}

Let $A \in \Art(\OO)$.
In \cite{Eme10b}, Emerton constructed a cohomological $\delta$-functor
\begin{equation*}
\HOrd_P : \Mod_G^\adm(A) \to \Mod_L^\adm(A)
\end{equation*}
which coincides with $\Ord_P$ in degree $0$.
This means that a short exact sequence
\begin{equation} \label{SEC}
0 \to \pi' \to \pi \to \pi'' \to 0
\end{equation}
in $\Mod_G^\adm(A)$ gives rise to a long exact sequence
\begin{equation*}
\cdots \to \HOrd[n]_P \pi' \to \HOrd[n]_P \pi \to \HOrd[n]_P \pi'' \to \HOrd[n+1]_P \pi' \to \cdots
\end{equation*}
with degree-increasing connecting homomorphisms functorial in \eqref{SEC}.

We review a recent result of one of us (J.H.) which gives a complete description of $\HOrd[1]_P(\Ind_{P^-}^G \bar{\sigma})$ for any admissible smooth $k[L]$-module $\bar{\sigma}$ (satisfying a technical assumption when $F=\Q_p$).
This will play a key role in Section \ref{id} in the proof of Theorem \ref{thm:defind}.

Let $\sigma$ be an admissible smooth $k[L]$-module.
We consider the following hypothesis, which will be needed when $F=\Q_p$.
Note that for all $\alpha \in \Delta \backslash \Delta_L$, $L \cap s_\alpha P s_\alpha$ is the standard parabolic subgroup of $L$ corresponding to $\Delta_L \cap s_\alpha (\Delta_L)$ and it is proper if and only if $\alpha \not \in \Delta_L^\perp$.

\begin{hyp} \label{hyp:HOrd}
$\Ord_{L \cap s_\alpha P s_\alpha} \sigma = 0$ for all $\alpha \in \Delta^1 \backslash (\Delta_L^1 \cup \Delta_L^{\perp,1})$.
\end{hyp}

\begin{rem}
In the terminology of \cite{AHV17}, $\sigma$ is said right cuspidal if $\Ord_Q \sigma = 0$ for any proper parabolic subgroup $Q \subset L$.
Thus Hypothesis \ref{hyp:HOrd} is satisfied by any right cuspidal representation $\sigma$.
In particular, it is satisfied by any supersingular representation $\sigma$.
\end{rem}

We now state the key calculation.

\begin{thm}[J.H.] \label{thm:HOrd}
Let $\bar{\sigma}$ be an admissible smooth $k[L]$-module.
\begin{enumerate}
\item If $F = \Q_p$ and $\bar{\sigma}$ satisfies Hypothesis \ref{hyp:HOrd}, then there is a natural $k[L]$-linear isomorphism
\begin{equation*}
\HOrd[1]_P \left( \Ind_{P^-}^G \bar{\sigma} \right) \simeq \bigoplus_{\alpha \in \Delta_L^{\perp,1}} \bar{\sigma}^\alpha \otimes \left( \bar{\varepsilon}^{-1} \circ \alpha \right).
\end{equation*}
\item If $F \neq \Q_p$, then $\HOrd[1]_P (\Ind_{P^-}^G \bar{\sigma}) = 0$.
\end{enumerate}
\end{thm}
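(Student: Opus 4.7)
The plan is to exploit the Bruhat decomposition $G = \bigsqcup P w P^-$ together with the cohomological $\delta$-functor property of $\HOrd[\bullet]_P$. The key point is that only length-one Weyl group elements (i.e.\ simple reflections $s_\alpha$ with $\alpha \in \Delta \setminus \Delta_L$) can possibly contribute to $\HOrd[1]_P(\Ind_{P^-}^G \bar{\sigma})$, and each surviving contribution is pinned down by a local rank-one computation involving $\bar{\varepsilon}$.

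First I would install a $G$-stable filtration on $\Ind_{P^-}^G \bar{\sigma}$ indexed by (the Bruhat order on) the minimal-length representatives of $W_L \backslash W / W_L$, whose graded pieces are compactly induced representations from individual Bruhat cells in $P^- \backslash G$. Applying $\HOrd[\bullet]_P$ produces long exact sequences that reduce the computation to $\HOrd[\bullet]_P$ of single-cell pieces. The open cell ($w=1$) yields $\bar{\sigma}$ in degree $0$ (Theorem \ref{thm:Ord}(1)) and nothing higher. A dimension estimate shows that the cell attached to $w$ has vanishing $\HOrd[n]_P$ for $n < \ell(w)$, so cells of length $\ge 2$ cannot feed degree $1$. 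Consequently $\HOrd[1]_P(\Ind_{P^-}^G \bar{\sigma})$ is the direct sum, over simple reflections $s_\alpha$ with $\alpha \in \Delta \setminus \Delta_L$, of the degree-one cell contributions.

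Second I would identify each such contribution by a rank-one calculation inside the ``$\alpha$-subgroup'' of $G$, using Emerton's explicit formula for $\HOrd[\bullet]_P$ in terms of cohomology of the opposite unipotent radical. The outcome (as in \cite{Hau16b}) is that the cell contribution attached to $s_\alpha$ equals $\Ord_{L \cap s_\alpha P s_\alpha}(\bar{\sigma}^\alpha)$ twisted by a character $\chi_\alpha$ determined by the root data at $\alpha$. Hypothesis \ref{hyp:HOrd} kills this whenever $\alpha \in \Delta^1 \setminus (\Delta_L^1 \cup \Delta_L^{\perp,1})$; roots in $\Delta_L^1$ are excluded as non-minimal representatives of $W_L \backslash W / W_L$; and roots in $\Delta \setminus \Delta^1$ yield a vanishing local $H^1$ because the root space has dimension $>1$. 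Only $\alpha \in \Delta_L^{\perp,1}$ remain, and for those $s_\alpha$ centralizes $\Delta_L$, so $L \cap s_\alpha P s_\alpha = L$ and the contribution simplifies to $\bar{\sigma}^\alpha \otimes \chi_\alpha$. The final local input identifies $\chi_\alpha = \bar{\varepsilon}^{-1} \circ \alpha$ when $F = \Q_p$ and shows the contribution vanishes when $F \neq \Q_p$ (\cite{Hau17}), giving (1) and (2) respectively.

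The main obstacle is this last local identification of $\chi_\alpha$: it requires a hands-on computation of a smooth mod-$p$ $\Ext^1$ between characters of $F^\times$ on the Iwahori/unipotent cohomology side of a rank-one $p$-adic group. Over $\Q_p$ this $\Ext^1$ is generated by the cyclotomic character, and tracing through the twist by the modulus of the $\mathrm{SL}_2$-type subgroup attached to $\alpha$ yields exactly $\bar{\varepsilon}^{-1} \circ \alpha$; over $F \neq \Q_p$ the relevant $\Ext^1$ vanishes, forcing $\chi_\alpha = 0$. Once this local input is in hand, the Bruhat-filtration argument above is routine $\delta$-functor bookkeeping.
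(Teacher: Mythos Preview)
The paper does not give an independent proof: it simply cites \cite[Cor.~3.3.9]{Hau16b} with $A=k$ and $n=1$. Your proposal is a reasonable outline of the argument actually carried out in \cite{Hau16b}, namely a Bruhat filtration of $\Ind_{P^-}^G\bar{\sigma}$ followed by a cell-by-cell analysis of $\HOrd[\bullet]_P$ and a rank-one computation for the surviving simple reflections. So in spirit you are reproducing the cited proof, not offering an alternative.

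A couple of points deserve tightening. First, the vanishing for $F\neq\Q_p$ is not because ``the relevant $\Ext^1$ vanishes'' (extensions of smooth characters of $F^\times$ certainly exist for any $F$); it is a degree shift: the unipotent subgroup attached to $\alpha$ has $\Q_p$-dimension $[F:\Q_p]\cdot\dim_F\gf_{(\alpha)}$, so for $\alpha\in\Delta^1$ and $F\neq\Q_p$ the cell attached to $s_\alpha$ first contributes to $\HOrd[{[F:\Q_p]}]_P$, not to degree $1$. Second, your citation of \cite{Hau17} is misplaced---that reference treats local fields of characteristic $p$; both (1) and (2) here come from \cite{Hau16b}. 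Finally, writing ``$\chi_\alpha=0$'' for a character is imprecise; you mean the corresponding contribution to $\HOrd[1]_P$ is zero. These are cosmetic issues; the skeleton of your argument is the right one.
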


\begin{proof}
This is \cite[Cor.~3.3.9]{Hau16b} with $A=k$ and $n=1$.
\end{proof}

\subsection{Deformation functors}

We let $\bar{\pi}$ be a smooth $k[G]$-module.
We will study the various lifts of $\bar{\pi}$ to smooth representations $\pi$ of $G$ over $A\in \Art(\OO)$.
In what follows $\Cat$ denotes the category of essentially small\footnote{A category $C$ is essentially small if it is equivalent to a small category (cf. \cite[Def.~1.3.16]{KS06}), i.e. if the isomorphism classes $\Ob(C)/\simeq$ form a set.} categories and $i : \Cat \to \Set$ denotes the functor taking an essentially small category $C$ to the set of isomorphism classes $\Ob(C)/\simeq$.

\begin{defn}\label{def}
We define several categories and functors.
\begin{enumerate}
\item A lift of $\bar{\pi}$ over $A \in \Art(\OO)$ is a pair $(\pi,\phi)$ where
\begin{itemize}
\item $\pi$ is an object of $\Mod_G^\infty(A)^\fl$,
\item $\phi: \pi \twoheadrightarrow \bar{\pi}$ is an $A[G]$-linear surjection with kernel $\m_A \pi$, i.e. which induces an $A[G]$-linear isomorphism $\pi \otimes_A k \iso \bar{\pi}$.
\end{itemize}
A morphism $\iota : (\pi,\phi) \to (\pi',\phi')$ of lifts of $\bar{\pi}$ over $A$ is an $A[G]$-linear morphism $\iota : \pi \to \pi'$ such that $\phi = \phi' \circ \iota$.
\item We define a covariant functor $\Deft_{\bar{\pi}} : \Art(\OO) \to \Cat$ by letting $\Deft_{\bar{\pi}}(A)$ be the essentially small category of lifts of $\bar{\pi}$ over $A$ for any $A \in \Art(\OO)$, and $\Deft_{\bar{\pi}}(\varphi) : \Deft_{\bar{\pi}}(A) \to \Deft_{\bar{\pi}}(A')$ be the base change functor for any morphism $\varphi : A \to A'$ in $\Art(\OO)$.
\item We define the deformation functor $\Def_{\bar{\pi}} : \Art(\OO) \to \Set$ as the composite $i \circ \Deft_{\bar{\pi}}$.
\end{enumerate}
\end{defn}

\begin{rem}
$\widetilde{\Def}_{\bar{\pi}}(k)$ is a groupoid, and $\Def_{\bar{\pi}}(k)$ is a singleton.
\end{rem}

We review some properties of lifts of $\bar{\pi}$.

\begin{lem} \label{lem:lifts}
Let $\pi$ be a smooth $A$-representation of $G$ and $\bar{\pi} \coloneqq \pi/\m_A\pi$.
\begin{enumerate}
\item $\pi$ is locally $Z$-finite if and only if $\bar{\pi}$ is locally $Z$-finite.
\item $\pi$ is admissible if and only if $\bar{\pi}$ is admissible.
\end{enumerate}
\end{lem}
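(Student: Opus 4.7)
Both parts will exploit the $\m_A$-adic filtration of $\pi$: since $A$ is Artinian there is some $n\geq 1$ with $\m_A^n=0$, giving a finite filtration $\pi\supset\m_A\pi\supset\cdots\supset\m_A^n\pi=0$. Each graded piece $\m_A^i\pi/\m_A^{i+1}\pi$ is a $k[G]$-module and is a quotient of $(\m_A^i/\m_A^{i+1})\otimes_k\bar\pi\simeq\bar\pi^{\oplus d_i}$ with $d_i\coloneqq\dim_k\m_A^i/\m_A^{i+1}<\infty$, so it inherits any property of $\bar\pi$ that is stable under finite direct sums and quotients in $\Mod_G^\infty(k)$. The forward directions of both parts are then immediate: both local $Z$-finiteness and admissibility pass to quotients of smooth $A[G]$-modules, and $\bar\pi$ is a quotient of $\pi$. (For admissibility this is part of the Serre property of $\Mod_G^\adm(A)$ from \cite[Prop.~2.2.13]{Eme10a}; for local $Z$-finiteness, the image of the finitely $A$-generated module $A[Z]v$ under $\pi\twoheadrightarrow\bar\pi$ is the finitely generated module $A[Z]\bar v$.)

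The backward direction of (2) will proceed by induction on $n$: the short exact sequence $0\to\m_A\pi\to\pi\to\bar\pi\to 0$ has $\bar\pi$ admissible by hypothesis and $\m_A\pi$ admissible by the inductive hypothesis applied to it (its reduction $\m_A\pi/\m_A^2\pi$ being admissible as a quotient of $\bar\pi^{\oplus d_0}$); closure of $\Mod_G^\adm(A)$ under extensions then yields admissibility of $\pi$.

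The backward direction of (1) follows the same inductive scheme, and this is where I expect to spend the main effort. Given $v\in\pi$, I would pick a compact open $K\subset G$ fixing $v$ and set $R\coloneqq A[Z/(K\cap Z)]$: since the center $Z$ is a commutative $p$-adic group, $Z/(K\cap Z)$ is a finitely generated discrete abelian group and $R$ is a Noetherian $A$-algebra. The $A[Z]$-action on $v$ factors through $R$, so $A[Z]v=R/I$ is a cyclic $R$-module. From the exact sequence
\[
0\to A[Z]v\cap\m_A\pi\to A[Z]v\to A[Z]\bar v\to 0,
\]
the right term is finitely generated over $A$ by hypothesis, while the left term is an $R$-submodule of the Noetherian $R$-module $R/I$, hence $R$-finitely generated by some $u_1,\ldots,u_s\in\m_A\pi$. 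By the inductive hypothesis applied to $\m_A\pi$ (whose own reduction is locally $Z$-finite as a quotient of $\bar\pi^{\oplus d_0}$), each $A[Z]u_i=Ru_i$ is finitely generated over $A$, whence so is their sum. The delicate point, which is really the main obstacle, is that one cannot apply Nakayama's lemma directly because $A[Z]v\cap\m_A\pi$ strictly contains $\m_A\cdot A[Z]v$ in general (the difference recording relations in $A[Z]\bar v$); the Noetherianness of $R$ is precisely the tool that circumvents this.
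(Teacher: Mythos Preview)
Your argument is correct (modulo the harmless index slip $d_0\mapsto d_1$: the reduction $\m_A\pi/\m_A^2\pi$ is a quotient of $(\m_A/\m_A^2)\otimes_k\bar\pi\simeq\bar\pi^{\oplus d_1}$, not $\bar\pi^{\oplus d_0}$), but it is organised differently from the paper's proof and, for part~(1), uses a genuinely different idea.

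The paper inducts on the \emph{length} of $A$ and uses the ``other'' short exact sequence: choosing $a\in\m_A$ nonzero with $a\m_A=0$, multiplication by $a$ identifies $\bar\pi$ with the \emph{sub}representation $a\pi\subset\pi$, yielding $0\to\bar\pi\to\pi\to\pi/a\pi\to0$; the induction is then applied to $\pi/a\pi$ over $A/aA$. For the backward direction of (1) the paper does not invoke Noetherianness of $A[Z/(K\cap Z)]$. Instead it fixes monoid generators $z_1,\dots,z_r$ of $Z/(Z\cap K)$ and, for each $i$, finds $f'_i\in A[X]$ with $f'_i(z_i)\cdot v\in a\pi\simeq\bar\pi$ (induction on $\pi/a\pi$), then $\bar f_i\in A[X]$ annihilating this element (hypothesis on $\bar\pi$), so that $f_i\coloneqq\bar f_i f'_i$ satisfies $f_i(z_i)\cdot v=0$; hence $A[Z]v$ is a quotient of $A[z_1,\dots,z_r]/(f_1(z_1),\dots,f_r(z_r))$, visibly finite over $A$.

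Your route instead puts $\bar\pi$ in the \emph{quotient} position and replaces the explicit polynomial construction by the single structural observation that $R=A[Z/(K\cap Z)]$ is Noetherian, so the $R$-submodule $A[Z]v\cap\m_A\pi$ of the cyclic $R$-module $A[Z]v$ is finitely $R$-generated; induction on $\m_A\pi$ (viewed over $A/\m_A^{n-1}$, which annihilates it) then finishes. This is slicker and avoids tracking generators one at a time; the paper's argument, on the other hand, is more elementary in that it never appeals to Hilbert's basis theorem and makes the finiteness completely explicit. For part~(2) the two arguments are essentially the same dévissage, just with the roles of sub and quotient swapped.
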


\begin{proof}
We proceed by induction on the length of $A$.
The base case $A=k$ is trivial.
Assume $A \neq k$ and that we know the results for rings of smaller length.
Pick $a \in A$ non-zero such that $a\m_A=0$ and set $A' \coloneqq A/aA$, so that $\ell(A') = \ell(A)-1$.
We set $\pi' \coloneqq \pi/a\pi$.
The surjection $\pi \twoheadrightarrow \pi'$ induces an isomorphism $\bar{\pi} \iso \pi'/\m_{A'}\pi'$.
The multiplication by $a$ induces an isomorphism $\bar{\pi} \iso a\pi$, hence a short exact sequence of smooth $A$-representations of $G$
\begin{equation*}
0 \to \bar{\pi} \to \pi \to \pi' \to 0.
\end{equation*}

If $\pi$ is locally $Z$-finite, then $\bar{\pi}$ is also locally $Z_M$-finite since it is a quotient of $\pi$.
Conversely, assume that $\bar{\pi}$ is locally $Z$-finite and let $v \in \pi$.
Since $\pi$ is smooth, there exists an open subgroup $Z_0 \subseteq Z$ fixing $v$.
Since $Z/Z_0$ is finitely generated as a monoid (see e.g. the proof of \cite[Lem.~3.2.1]{Eme10a}), there exist $r \in \N$ and $z_1,\dots,z_r \in Z$ such that $A[Z] \cdot v = A[z_1,\dots,z_r] \cdot v$.
Let $v'$ be the image of $v$ in $\pi'$.
By the induction hypothesis, $\pi'$ is locally $Z$-finite.
Thus for each $1 \leq i \leq r$, there exists $f'_i \in A[X]$ such that $f'_i(z_i) \cdot v' = 0$, i.e. $f'_i(z_i) \cdot v \in \bar{\pi}$.
Since $\bar{\pi}$ is locally $Z$-finite, there exists $\bar{f}_i \in A[X]$ such that $\bar{f}_i(z_i) \cdot (f'_i(z_i) \cdot v) = 0$.
We set $f_i \coloneqq \bar{f}_i f'_i \in A[X]$ so that $f_i(z_i) \cdot v = 0$.
Therefore $A[Z] \cdot v = A[z_1,\dots,z_r]/(f_1(z_1),\dots,f_r(z_r)) \cdot v$ is a finitely generated $A$-module.
This proves (i).

If $\pi$ is admissible, then $\bar{\pi}$ is also admissible since it is isomorphic to a subrepresentation of $\pi$ and $A$ is noetherian.
Conversely if $\bar{\pi}$ is admissible, then $\pi'$ is admissible by the induction hypothesis, so that $\pi$ is also admissible since it is an extension of $\pi'$ by $\bar{\pi}$.
This proves (ii).
\end{proof}

\subsection{Induced deformations}\label{id}

We let $\bar{\sigma}$ be a smooth $k[L]$-module and we set $\bar{\pi} \coloneqq \Ind_{P^-}^G \bar{\sigma}$.
By functoriality and using part (1) of Lemma \ref{lem:Ind}, we see that
\begin{itemize}
\item if $(\sigma,\phi)$ is a lift of $\bar{\sigma}$ over $A \in \Art(\OO)$, then $(\Ind_{P^-}^G \sigma,\Ind_{P^-}^G \phi)$ is a lift of $\bar{\pi}$ over $A$,
\item and if $\iota : (\sigma,\phi) \to (\sigma',\phi')$ is a morphism of lifts of $\bar{\sigma}$ over $A \in \Art(\OO)$, then $\Ind_{P^-}^G \iota : (\Ind_{P^-}^G \sigma,\Ind_{P^-}^G \phi) \to (\Ind_{P^-}^G \sigma',\Ind_{P^-}^G \phi')$ is a morphism of lifts of $\bar{\pi}$ over $A$.
\end{itemize}
Thus for any $A \in \Art(\OO)$, we obtain a functor
\begin{equation*}
\Ind_{P^-}^G : \Deft_{\bar{\sigma}}(A) \to \Deft_{\bar{\pi}}(A)
\end{equation*}
which is functorial in $A$ by part (2) of Lemma \ref{lem:Ind}, i.e. it is induced by a morphism $\Ind_{P^-}^G : \Deft_{\bar{\sigma}} \to \Deft_{\bar{\pi}}$ between functors $\Art(\OO) \to \Cat$.
Thus composing the latter with $i : \Cat \to \Set$ yields a morphism
\begin{equation*}
\Ind_{P^-}^G : \Def_{\bar{\sigma}} \to \Def_{\bar{\pi}}
\end{equation*}
between functors $\Art(\OO) \to \Set$.

\begin{lem} \label{lem:defind}
Let $\bar{\sigma}$ be a locally $Z_L$-finite smooth $k[L]$-module and $\bar{\pi} \coloneqq \Ind_{P^-}^G \bar{\sigma}$.
Then
\begin{enumerate}
\item $\Ind_{P^-}^G : \Deft_{\bar{\sigma}}(A) \to \Deft_{\bar{\pi}}(A)$ is fully faithful for any $A \in \Art(\OO)$,
\item $\Ind_{P^-}^G : \Def_{\bar{\sigma}}(A) \to \Def_{\bar{\pi}}(A)$ is injective for any $A \in \Art(\OO)$.
\end{enumerate}
\end{lem}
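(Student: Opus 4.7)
The strategy is to leverage the adjunction between $\Ind_{P^-}^G$ and $\Ord_P$ provided by Theorem \ref{thm:Ord}. The key preliminary observation is that, since $\bar{\sigma}$ is locally $Z_L$-finite, Lemma \ref{lem:lifts}(i) (applied with $L$ and $Z_L$ in the roles of $G$ and $Z$) guarantees that any lift $\sigma$ of $\bar{\sigma}$ over $A \in \Art(\OO)$ is again locally $Z_L$-finite. Consequently both parts of Theorem \ref{thm:Ord} apply to every such lift, and in particular the unit $\epsilon_\sigma \colon \sigma \iso \Ord_P(\Ind_{P^-}^G \sigma)$ is an isomorphism.

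For part (1), fix lifts $(\sigma,\phi)$ and $(\sigma',\phi')$ of $\bar{\sigma}$ over $A$. Composing the adjunction isomorphism of Theorem \ref{thm:Ord}(2) (with $\pi = \Ind_{P^-}^G \sigma'$) with the bijection induced by $\epsilon_{\sigma'}^{-1}$ produces a bijection
\begin{equation*}
\Phi \colon \Hom_{A[G]}(\Ind_{P^-}^G \sigma, \Ind_{P^-}^G \sigma') \iso \Hom_{A[L]}(\sigma, \sigma'),\quad \tilde\iota \longmapsto \epsilon_{\sigma'}^{-1} \circ \Ord_P(\tilde\iota) \circ \epsilon_\sigma,
\end{equation*}
whose inverse sends $\iota \mapsto \Ind_{P^-}^G \iota$ (by naturality of $\epsilon$). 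It remains to check that $\Phi$ restricts to a bijection on morphisms of lifts. Naturality of the adjunction in its second argument, applied to $\Ind_{P^-}^G \phi' \colon \Ind_{P^-}^G \sigma' \to \bar{\pi}$, together with naturality of $\epsilon$ in $\sigma'$, yields the identity $\phi' \circ \Phi(\tilde\iota) = \Phi_{\bar\sigma}(\Ind_{P^-}^G \phi' \circ \tilde\iota)$, where $\Phi_{\bar\sigma}$ denotes the analogous bijection with codomain $\bar{\sigma}$ (which makes sense because $\bar{\sigma}$ is locally $Z_L$-finite). The same principle applied to $\phi$ itself gives $\Phi_{\bar\sigma}(\Ind_{P^-}^G \phi) = \phi$. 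Consequently $\tilde\iota$ is a morphism of lifts if and only if $\Phi(\tilde\iota)$ is, which is the desired bijection.

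For part (2), suppose $\tilde\iota$ is an isomorphism of lifts $\Ind_{P^-}^G(\sigma, \phi) \iso \Ind_{P^-}^G(\sigma', \phi')$. Then $\Phi(\tilde\iota) = \epsilon_{\sigma'}^{-1} \circ \Ord_P(\tilde\iota) \circ \epsilon_\sigma$ is a composition of isomorphisms (since $\Ord_P$ is a functor and preserves isomorphisms), hence an isomorphism $\sigma \iso \sigma'$; by (1) it is automatically a morphism of lifts. Therefore $(\sigma, \phi) \simeq (\sigma', \phi')$ already in $\Deft_{\bar{\sigma}}(A)$, proving injectivity at the level of isomorphism classes.

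The main technical point — and essentially the only step that is not pure formalism — is the identification $\Phi_{\bar\sigma}(\Ind_{P^-}^G \phi) = \phi$, which follows from a short diagram chase using naturality of $\epsilon$. Everything else is standard adjunction bookkeeping.
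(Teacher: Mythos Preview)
Your proof is correct and follows essentially the same approach as the paper: both use the unit $\epsilon$ of the $(\Ind_{P^-}^G,\Ord_P)$ adjunction (after invoking Lemma~\ref{lem:lifts} to ensure the lifts are locally $Z_L$-finite) to build an explicit inverse $\tilde\iota \mapsto \epsilon_{\sigma'}^{-1} \circ \Ord_P(\tilde\iota) \circ \epsilon_\sigma$ to the map $\Ind_{P^-}^G$ on morphisms. The only cosmetic differences are that the paper packages the verification that this inverse respects lift structures into the phrase ``by naturality of $\epsilon$'' and uses the triangle identity~\eqref{eqadj} for the other direction, and that the paper deduces (2) from (1) by the general fact that fully faithful functors are injective on isomorphism classes rather than spelling it out as you do.
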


\begin{proof}
Since a fully faithful functor induces an injection between isomorphism classes of objects, (2) is a consequence of (1) which we now prove.

Let $A \in \Art(\OO)$.
Let $(\sigma,\phi)$ and $(\sigma',\phi')$ be two lifts of $\bar{\sigma}$ over $A$.
Note that $\sigma$ and $\sigma'$ are objects of $\Mod_L^{\infty,Z_L-\lfin}(A)$ by part (1) of Lemma \ref{lem:lifts} and recall the unit $\epsilon$ of the adjunction between $\Ind_{P^-}^G$ and $\Ord_P$.
We consider the $A$-linear morphism
\begin{equation} \label{HomInd}
\Hom_{\Deft_{\bar{\sigma}}(A)} \left( \left( \sigma, \phi \right), \left( \sigma', \phi' \right) \right) \to \Hom_{\Deft_{\bar{\pi}}(A)} \left( \Ind_{P^-}^G \left( \sigma,\phi \right), \Ind_{P^-}^G \left( \sigma', \phi' \right) \right)
\end{equation}
defined by $\imath \mapsto \Ind_{P^-}^G \imath$.
We claim that the map $\jmath \mapsto \epsilon_{\sigma'}^{-1} \circ (\Ord_P \jmath) \circ \epsilon_{\sigma}$ is an inverse.
\begin{itemize}
\item By naturality of $\epsilon$, it is well defined and $\epsilon_{\sigma'}^{-1} \circ (\Ord_P (\Ind_{P^-}^G \imath)) \circ \epsilon_{\sigma} = \imath$ for any morphism $\imath : (\sigma,\phi) \to (\sigma',\phi')$ in $\Deft_{\bar{\sigma}}(A)$.
\item By \eqref{eqadj} we also have $\Ind_{P^-}^G(\epsilon_{\sigma'}^{-1} \circ (\Ord_P \jmath) \circ \epsilon_{\sigma}) = \jmath$ for any morphism $\jmath : (\Ind_{P^-}^G \sigma,\Ind_{P^-}^G \phi) \to (\Ind_{P^-}^G \sigma',\Ind_{P^-}^G \phi')$ in $\Deft_{\bar{\pi}}(A)$.
\end{itemize}
Thus \eqref{HomInd} is bijective.
\end{proof}

\begin{rem} \label{Vig}
Using Vignéras' functor $\operatorname{R}_{P^-}^G$ (cf. Remark \ref{rem:R}) instead of $\Ord_P$, one can prove Lemma \ref{lem:defind} for any smooth $k[L]$-module $\bar{\sigma}$.
\end{rem}

The following is our main result in the Artinian case.
Our proof is an extension (and to some extent a correction\footnote{The inductive step fails if his $V$ is just faithful, not faithfully flat, and $\bar{\varepsilon}$ there should be $\bar{\varepsilon}^{-1}$.}) of the proof of \cite[Prop.~4.2.14]{Eme10b} (see also the pertaining Remark 4.2.17 in op. cit.)

\begin{thm} \label{thm:defind}
Let $\bar{\sigma}$ be an admissible smooth $k[L]$-module and $\bar{\pi} \coloneqq\Ind_{P^-}^G \bar{\sigma}$.
If $F=\Q_p$, then assume that
\begin{enumerate}[(a)]
\item Hypothesis \ref{hyp:HOrd} is satisfied,
\item $\Hom_L(\bar{\sigma},\bar{\sigma}^\alpha \otimes (\bar{\varepsilon}^{-1} \circ \alpha))=0$ for all $\alpha \in \Delta_L^{\perp,1}$.
\end{enumerate}
Then
\begin{enumerate}
\item $\Ind_{P^-}^G : \Deft_{\bar{\sigma}}(A) \to \Deft_{\bar{\pi}}(A)$ is an equivalence for any $A \in \Art(\OO)$ with quasi-inverse induced by $\Ord_P$,
\item $\Ind_{P^-}^G : \Def_{\bar{\sigma}} \to \Def_{\bar{\pi}}$ is an isomorphism.
\end{enumerate}
\end{thm}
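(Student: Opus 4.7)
By Lemma \ref{lem:defind} the functor $\Ind_{P^-}^G$ on $\Deft$'s is already fully faithful and the induced map on $\Def$'s is injective, so what remains is essential surjectivity of $\Ind_{P^-}^G : \Deft_{\bar{\sigma}}(A) \to \Deft_{\bar{\pi}}(A)$; parts (1) and (2) will both follow, the latter by passing to isomorphism classes. The plan is to argue by induction on $\ell(A)$. The case $A=k$ is trivial since the only lift of $\bar{\pi}$ over $k$ (up to isomorphism) is $\bar{\pi}$ itself, and by hypothesis $\bar{\pi} = \Ind_{P^-}^G \bar{\sigma}$.

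For the inductive step, pick a nonzero $a \in \m_A$ with $a\m_A = 0$ and set $A' \coloneqq A/aA$, of strictly smaller length. Given a lift $(\pi,\phi)$ of $\bar{\pi}$ over $A$, the quotient $\pi' \coloneqq \pi/a\pi$ is a lift of $\bar{\pi}$ over $A'$ (a routine base-change check), and since $a^2=0$ multiplication by $a$ produces a short exact sequence
\begin{equation*}
0 \to \bar{\pi} \to \pi \to \pi' \to 0
\end{equation*}
in $\Mod_G^{\adm}(A)$ (admissibility of $\pi$ via Lemma \ref{lem:lifts}(2)). Applying Emerton's $\delta$-functor $\HOrd[\bullet]_P$, identifying $\Ord_P \bar{\pi} \simeq \bar{\sigma}$ via Theorem \ref{thm:Ord}(1), and using the inductive hypothesis that $\pi' \simeq \Ind_{P^-}^G \sigma'$ for the (unique) lift $\sigma' \coloneqq \Ord_P \pi'$ of $\bar{\sigma}$ over $A'$, yields
\begin{equation*}
0 \to \bar{\sigma} \to \Ord_P \pi \to \sigma' \xrightarrow{\delta} \HOrd[1]_P \bar{\pi}.
\end{equation*}

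The crux of the argument is showing $\delta = 0$. Since $\HOrd[1]_P$ is $A$-linear and $\bar{\pi}$ is annihilated by $\m_A$, so is $\HOrd[1]_P \bar{\pi}$; hence $\delta$ factors through a $k[L]$-linear map $\sigma'/\m_{A'}\sigma' \simeq \bar{\sigma} \to \HOrd[1]_P \bar{\pi}$. When $F \neq \Q_p$, Theorem \ref{thm:HOrd}(2) gives $\HOrd[1]_P \bar{\pi} = 0$ outright. When $F = \Q_p$, Theorem \ref{thm:HOrd}(1) (which uses hypothesis (a)) identifies $\HOrd[1]_P \bar{\pi}$ with $\bigoplus_{\alpha \in \Delta_L^{\perp,1}} \bar{\sigma}^\alpha \otimes (\bar{\varepsilon}^{-1} \circ \alpha)$, and hypothesis (b) is precisely the statement that $\Hom_L$ from $\bar{\sigma}$ into each summand vanishes. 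So $\delta = 0$ in either case.

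Putting $\sigma \coloneqq \Ord_P \pi$, there is then a short exact sequence $0 \to \bar{\sigma} \to \sigma \to \sigma' \to 0$. Applying the exact functor $\Ind_{P^-}^G$ and mapping down to the original extension via the counit $\eta$, the right-hand vertical $\eta_{\pi'}$ is an isomorphism by induction, and the left-hand vertical is an isomorphism because $\bar{\pi} = \Ind_{P^-}^G \bar{\sigma}$ together with the triangle identity \eqref{eqadj} and the fact that $\epsilon_{\bar{\sigma}}$ is an isomorphism force $\eta_{\bar{\pi}}$ to be one. The five lemma then yields $\eta_\pi : \Ind_{P^-}^G \sigma \iso \pi$. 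Since $\pi$ is free over $A$, Lemma \ref{lem:Ind}(1) forces $\sigma$ to be free over $A$; reducing $\eta_\pi$ modulo $\m_A$ (via Lemma \ref{lem:Ind}(2)) and applying $\Ord_P$ identifies $\sigma \otimes_A k$ with $\bar{\sigma}$. Thus $\sigma$ is a genuine lift of $\bar{\sigma}$ over $A$ with $\Ind_{P^-}^G \sigma \simeq \pi$, completing the induction. The main obstacle is the vanishing of $\delta$; once that is in hand the rest is a formal five-lemma assembly.
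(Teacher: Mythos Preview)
Your proof is correct and follows essentially the same route as the paper: induction on $\ell(A)$, the short exact sequence coming from a socle element $a\in\m_A$, application of $\HOrd_P$ and the key vanishing of the connecting map via Theorem \ref{thm:HOrd} together with hypothesis (b), then the five lemma to show $\eta_\pi$ is an isomorphism. The only cosmetic difference is that you set up the comparison diagram using naturality of the counit $\eta$ (with verticals $\eta_{\bar{\pi}},\eta_\pi,\eta_{\pi'}$), whereas the paper writes the outer verticals as the explicit isomorphisms $\iota,\iota'$ supplied by the inductive hypothesis and invokes \eqref{eqadj} for commutativity; both amount to the same thing.
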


\begin{proof}
Since an equivalence of categories induces a bijection between isomorphism classes of objects, (2) is a consequence of (1) which we now prove.
Given lemma \ref{lem:defind} (see also Remark \ref{Vig}), we only have to prove that $\Ind_{P^-}^G : \Deft_{\bar{\sigma}}(A) \to \Deft_{\bar{\pi}}(A)$ is essentially surjective for all $A \in \Art(\OO)$.

We proceed by induction on the length of $A \in \Art(\OO)$ (cf. Remark \ref{length}).
The base case $A=k$ is trivial.
Assume $\m_A \neq 0$ and that we know surjectivity for rings of smaller length.
Let $(\pi,\phi)$ be a lift of $\bar{\pi}$ over $A$.
Note that $\pi$ is an object of $\Mod_G^\adm(A)$ by part (2) of Lemma \ref{lem:lifts} and recall the unit $\epsilon$ and the counit $\eta$ of the adjunction between $\Ind_{P^-}^G$ and $\Ord_P$.
We will prove that $(\Ord_P \pi,\epsilon_{\bar{\sigma}}^{-1} \circ (\Ord_P \phi))$ is a lift of $\bar{\sigma}$ over $A$, and that the natural morphism of lifts of $\bar{\pi}$ over $A$
\begin{equation*}
\eta_\pi : \left( \Ind_{P^-}^G \left( \Ord_P \pi \right),\Ind_{P^-}^G \left( \epsilon_{\bar{\sigma}}^{-1} \circ \left( \Ord_P \phi \right) \right) \right) \to \left(\pi,\phi\right),
\end{equation*}
where we used the equality $\Ind_{P^-}^G (\epsilon_{\bar{\sigma}}^{-1} \circ (\Ord_P \phi)) = \phi \circ \eta_\pi$ which follows from equality \eqref{eqadj}, is an isomorphism.

Pick $a \in A$ non-zero such that $a\m_A=0$, and let $I = (a)$ be the proper ideal generated by $a$.
We have a short exact sequence of admissible smooth $A[G]$-modules
\begin{equation} \label{dev}
0 \to a\pi \to \pi \to \pi/a\pi \to 0.
\end{equation}
Note that $a\pi$ is a non-zero vector space over $k$ and $\pi/a\pi$ is a free $A/I$-module.
Since $a\pi \subset \m_A\pi$, $\phi$ factors through an $A[G]$-linear surjection $\bar{\phi} : \pi/a\pi \twoheadrightarrow \bar{\pi}$ whose kernel is $\m_A(\pi/a\pi)$, so that $(\pi/a\pi,\bar{\phi})$ is a deformation of $\bar{\pi}$ over $A/I$ which has length $\ell(A)-1$.
By our induction hypothesis, there exists a deformation $(\sigma',\phi')$ of $\bar{\sigma}$ over $A/I$ and an isomorphism of deformations of $\bar{\pi}$ over $A/I$
\begin{equation*}
\iota' : \left( \Ind_{P^-}^G \sigma',\Ind_{P^-}^G \phi' \right) \iso \left( \pi/a\pi,\bar{\phi} \right).
\end{equation*}
On the other hand, multiplication by $a$ induces an $A[G]$-linear isomorphism
\begin{equation*}
\iota : \bar{\pi} \simeq \pi/\m_A\pi \iso a\pi.
\end{equation*}

Applying the $\delta$-functor $\HOrd_P$ to the exact sequence \eqref{dev} and using the $A[L]$-linear isomorphisms $(\Ord_P \iota) \circ \epsilon_{\bar{\sigma}}$ and $(\Ord_P \iota') \circ \epsilon_{\sigma'}$ yields an exact sequence of admissible smooth $A[L]$-modules
\begin{equation} \label{HOrd}
0 \to \bar{\sigma} \to \Ord_P \pi \to \sigma' \to \HOrd[1]_P \left( \Ind_{P^-}^G \bar{\sigma} \right).
\end{equation}
Any $A[L]$-linear morphism $\sigma' \to \HOrd[1]_P (\Ind_{P^-}^G \bar{\sigma})$ must factor through $\phi'$.
Thus by Theorem \ref{thm:HOrd} and condition (b), we deduce that the last arrow of the exact sequence \eqref{HOrd} is zero, so that we can treat the rightmost term as being zero.

Now applying the exact functor $\Ind_{P^-}^G$ gives (using again equality \eqref{eqadj}) a commutative diagram of admissible smooth $A[G]$-modules
\begin{equation*}
\begin{tikzcd}
0 \rar & \Ind_{P^-}^G \bar{\sigma} \dar["\iota","\vertiso"'] \rar & \Ind_{P^-}^G \left( \Ord_P \pi \right) \dar{\eta_\pi} \rar & \Ind_{P^-}^G \sigma' \dar["\iota'","\vertiso"'] \rar & 0 \\
0 \rar & a\pi \rar & \pi \rar & \pi/a\pi \rar & 0
\end{tikzcd}
\end{equation*}
from which we deduce that $\eta_\pi$ is also an $A[G]$-linear isomorphism by the five lemma.
Moreover, we deduce that the image of $\Ind_{P^-}^G \bar{\sigma}$ in $\Ind_{P^-}^G \left( \Ord_P \pi \right)$ is $a(\Ind_{P^-}^G \left( \Ord_P \pi \right))$, thus the image of the first non-trivial arrow of the exact sequence \eqref{HOrd} is $a(\Ord_P \pi)$.

From the $A$-linear isomorphism $\eta_\pi$ and the freeness of the $A$-module $\pi$, we deduce that $\Ord_P \pi$ is a free $A$-module using the ``only if'' part of part (1) of Lemma \ref{lem:Ind}.
Furthermore, we have a commutative diagram of admissible smooth $A[L]$-modules
\begin{equation*}
\begin{tikzcd}
\Ord_P \pi \dar["\Ord_P \phi"'] \rar[two heads] & \Ord_P \left( \pi/a\pi \right) \dar["\Ord_P \bar{\phi}"'] \rar["\sim"',"\Ord_P \iota'^{-1}"] & \Ord_P \left( \Ind_{P^-}^G \sigma' \right) \dar["\Ord_P \left( \Ind_{P^-}^G \phi' \right)"'] \rar["\epsilon_{\sigma'}^{-1}","\sim"'] & \sigma' \dar[two heads]{\phi'} \\
\Ord_P \bar{\pi} \rar[equal] & \Ord_P \bar{\pi} \rar[equal] & \Ord_P \bar{\pi} \rar["\epsilon_{\bar{\sigma}}^{-1}","\sim"'] & \bar{\sigma}
\end{tikzcd}
\end{equation*}
where the composite of the upper horizontal arrows is the next to last arrow of the exact sequence \eqref{HOrd}, which is surjective with kernel $a(\Ord_P \pi)$.
Since $a(\Ord_P \pi) \subset \m_A(\Ord_P \pi)$ and $\phi'$ is surjective with kernel $\m_A \sigma'$, we deduce that $\epsilon_{\bar{\sigma}}^{-1} \circ (\Ord_P \phi)$ is surjective with kernel $\m_A (\Ord_P \pi)$.
\end{proof}

\section{Parabolic induction and deformations over Noetherian rings}

\subsection{Orthonormalizable modules}

Let $A \in \Noe(\OO)$.
An $A$-module $U$ is called $\m_A$-adically complete and separated if $U \iso \varprojlim_n U/\m_A^nU$.
If in addition each quotient $U/\m_A^nU$ is free over $A/\m_A^n$, we say that $U$ is orthonormalizable (cf. \cite[Def.~B.1]{Eme11}).

\begin{lem}\label{onb}
The following conditions are equivalent for an $A$-module $U$:
\begin{enumerate}
\item $U$ is orthonormalizable;
\item $U$ is $\m_A$-adically complete, separated, and flat;
\item There is a set $I$ such that $U$ is isomorphic to the module of decaying functions in $A^I$.
(A function $I \to A$ is called decaying if its reduction modulo $\m_A^n$ has finite support for all $n$, cf. \cite[Def.~2.1]{Yek11}.)
\item $U\simeq \varprojlim_n U_n$ where $\{U_n\}_n$ is an inverse system of $A$-modules with the following properties:
\begin{enumerate}[(i)]
\item $\m_A^n U_n=0$, and $U_n$ is free as a module over $A/\m_A^n$;
\item $U_{n+1}\otimes_{A/\m_A^{n+1}} A/\m_A^n \iso U_n$.
\end{enumerate}
\end{enumerate}
\end{lem}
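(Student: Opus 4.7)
The plan is to establish the four equivalences via the implications (1)$\Leftrightarrow$(4), (3)$\Rightarrow$(1), (1)$\Rightarrow$(2), and (2)$\Rightarrow$(3), treating (2)$\Rightarrow$(3) as the substantive content. The three formal implications dispatch quickly: for (1)$\Leftrightarrow$(4), given (1) take $U_n := U/\m_A^n U$, where compatibility (ii) follows from right-exactness of $-\otimes_{A/\m_A^{n+1}} A/\m_A^n$; conversely, given an inverse system as in (4), set $U := \varprojlim U_n$ and note that the transition maps $U_{n+1}\twoheadrightarrow U_n$ are surjective (lift a free basis), so the Mittag-Leffler condition yields $U/\m_A^n U \iso U_n$, hence $U$ is orthonormalizable. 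The implication (3)$\Rightarrow$(1) is immediate from the identification of the decaying functions in $A^I$ with $\varprojlim_n (A/\m_A^n)^{(I)}$. For (1)$\Rightarrow$(2), completeness and separatedness are the definition, and flatness follows from the local flatness criterion: for each $n$, $U/\m_A^n U$ is $A/\m_A^n$-free, hence $\operatorname{Tor}_1^A(A/\m_A^n, U) = 0$, and passing to the limit (using $A$ Noetherian and $U$ complete) gives $U$ flat over $A$.

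The heart of the proof is (2)$\Rightarrow$(3). Choose a $k$-basis $\{\bar{e}_i\}_{i\in I}$ of $U/\m_A U$ and lift it to a family $\{e_i\}_{i\in I}\subset U$. Let $F := A^{(I)}$ and let $\phi\colon F\to U$ be the $A$-linear map sending the standard basis to $\{e_i\}$, so that $\phi\otimes_A k$ is an isomorphism by construction. The main step is to show, by induction on $n$, that the induced map $\phi_n\colon F/\m_A^n F\to U/\m_A^n U$ is an isomorphism for every $n \geq 1$. Since both $F$ and $U$ are flat over $A$, tensoring the exact sequence
\begin{equation*}
0 \to \m_A^n/\m_A^{n+1} \to A/\m_A^{n+1} \to A/\m_A^n \to 0
\end{equation*}
with $F$ and with $U$ yields two short exact sequences with the left terms identified with $(\m_A^n/\m_A^{n+1}) \otimes_k (F/\m_A F)$ and $(\m_A^n/\m_A^{n+1}) \otimes_k (U/\m_A U)$ respectively; the map $\phi$ connects these two sequences, inducing $\id \otimes \phi_1$ on the left terms (an isomorphism) and $\phi_n$ on the right (an isomorphism by induction). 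The five lemma then gives that $\phi_{n+1}$ is an isomorphism. Finally, passing to the inverse limit and using $U \iso \varprojlim_n U/\m_A^n U$, the map $\phi$ extends uniquely to an isomorphism between the decaying-functions module in $A^I$ and $U$, giving (3) (and a fortiori (1) via the already-established (3)$\Rightarrow$(1)).

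The main obstacle is controlling the graded pieces: the identification $\m_A^n U/\m_A^{n+1} U \simeq (\m_A^n/\m_A^{n+1}) \otimes_k (U/\m_A U)$ is what makes the inductive step go through, and it relies essentially on $U$ being $A$-flat rather than merely complete and separated. The remainder of the argument is a straightforward combination of the local flatness criterion, Mittag-Leffler, and completeness.
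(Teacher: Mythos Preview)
Your approach differs from the paper's, which simply cites \cite[Cor.~4.5]{Yek11} for the equivalence of (2), (3), (4) and \cite[Lem.~B.6]{Eme11} for (1)$\Rightarrow$(2), then observes for (2)$\Rightarrow$(1) that a flat module over an Artinian local ring is free. You instead attempt a self-contained argument, and your inductive proof of (2)$\Rightarrow$(3) via the five lemma on graded pieces is correct and is indeed where the content lies.

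There is, however, a real gap in your (1)$\Rightarrow$(2). The claim ``$U/\m_A^n U$ is $A/\m_A^n$-free, hence $\operatorname{Tor}_1^A(A/\m_A^n, U) = 0$'' is not a valid implication: freeness of the quotient $U/\m_A^n U$ says nothing directly about the kernel of $\m_A^n \otimes_A U \to U$ (for $n=1$ the quotient is always free over the field $k$). The local flatness criterion in the form ``$M/I^n M$ flat over $A/I^n$ for all $n$ $\Rightarrow$ $M$ flat over $A$'' requires an auxiliary hypothesis --- typically that $M$ is ideally $I$-adically separated, or finitely generated over some Noetherian $A$-algebra --- which you have not checked. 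What does hold is that for Noetherian $A$, if $M/\m_A^n M$ is flat over $A/\m_A^n$ for all $n$ then the $\m_A$-adic completion $\widehat{M}$ is $A$-flat; since $U=\widehat{U}$ this suffices. But this is exactly the nontrivial input from \cite{Yek11} that the paper is citing, so your argument is not actually more elementary here.

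A smaller issue: in (4)$\Rightarrow$(1), Mittag--Leffler gives that $U\to U_n$ is surjective, but its kernel is $\varprojlim_m \m_A^n U_m$, and identifying this with $\m_A^n U$ is again the statement that the completion is $\m_A$-adically complete --- the same nontrivial point. The clean route is (4)$\Rightarrow$(3) by lifting a basis of $U_1$ compatibly through the tower (Nakayama at each step), which then feeds into your (3)$\Rightarrow$(1).
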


\begin{proof}
The equivalence of the last three criteria is \cite[Cor.~4.5]{Yek11}.
That (1) implies (2) is part of \cite[Lem.~B.6]{Eme11}.
The converse is clear: $U/\m_A^nU$ is flat over $A/\m_A^n$ precisely when it is free.
\end{proof}

If $U$ is an $\m_A$-adically complete and separated $A$-module and $A \to A'$ is a morphism in $\Noe(\OO)$, we define the corresponding base change as the completed tensor product
\begin{equation} \label{BCnoetherian}
U \otimesh_A A' \coloneqq \varprojlim_n U \otimes_A A'/\m_{A'}^n.
\end{equation}
It is $\m_{A'}$-adically complete and separated (cf. \cite[Cor.~3.5]{Yek11}).
Orthonormalizable modules behave well with respect to base change:

\begin{lem}
Let $U$ be an orthonormalizable $A$-module and $A\to A'$ be a morphism in $\Noe(\OO)$.
Then $U \otimesh_A A'$ is an orthonormalizable $A'$-module.
\end{lem}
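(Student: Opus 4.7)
The plan is to verify criterion (4) of Lemma~\ref{onb} for the $A'$-module $V \coloneqq U \otimesh_A A'$, using the inverse system $V_n \coloneqq U \otimes_A A'/\m_{A'}^n$ whose limit is $V$ by the very definition~\eqref{BCnoetherian}. The key input I would isolate at the outset is that the structure morphism $A \to A'$ is local by definition of $\Noe(\OO)$, hence $\m_A^n A' \subseteq \m_{A'}^n$ for every $n \geq 1$. This makes $A'/\m_{A'}^n$ naturally an $A/\m_A^n$-algebra and yields a natural identification
\begin{equation*}
V_n \simeq (U/\m_A^n U) \otimes_{A/\m_A^n} A'/\m_{A'}^n.
\end{equation*}

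Since $U$ is orthonormalizable, criterion (4)(i) of Lemma~\ref{onb} ensures that $U/\m_A^n U$ is free over $A/\m_A^n$, so the displayed base change is free over $A'/\m_{A'}^n$ and is obviously annihilated by $\m_{A'}^n$; this takes care of condition (i). For condition (ii) I would invoke the associativity of tensor products to compute
\begin{equation*}
V_{n+1} \otimes_{A'/\m_{A'}^{n+1}} A'/\m_{A'}^n \simeq U \otimes_A A'/\m_{A'}^n = V_n
\end{equation*}
with the evident transition maps, which are compatible with those of $(V_n)$ by construction. Consequently $(V_n)$ is a system of the form required in criterion~(4), and its limit $V$ is orthonormalizable over $A'$.

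I do not anticipate any real obstacle here: once the inclusion $\m_A^n A' \subseteq \m_{A'}^n$ has been extracted from the locality of the structure morphism, the proof reduces to the elementary behavior of finite base change of free modules and the associativity of tensor products. The only thing worth emphasizing is that locality of $A \to A'$ is precisely what makes the reduction modulo $\m_{A'}^n$ compatible with the orthonormalizable structure coming from the filtration by powers of $\m_A$.
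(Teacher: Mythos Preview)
Your proof is correct and follows essentially the same computation as the paper: both reduce to the identification $U \otimes_A A'/\m_{A'}^n \simeq (U/\m_A^n U)\otimes_{A/\m_A^n} A'/\m_{A'}^n$ (using locality of $A\to A'$) and the freeness of $U/\m_A^n U$. The only cosmetic difference is that the paper verifies the definition of orthonormalizable directly—first invoking the remark preceding the lemma (that $U\otimesh_A A'$ is $\m_{A'}$-adically complete and separated) and then computing $(U\otimesh_A A')\otimes_{A'}A'/\m_{A'}^n$—whereas you package the same calculation via criterion~(4) of Lemma~\ref{onb}, which lets you bypass the separate appeal to completeness.
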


\begin{proof}
For any $n \geq 1$,
\begin{align*}
(U \otimesh_A A') \otimes_{A'} A'/\m_{A'}^n &\simeq (U \otimes_A A') \otimes_{A'} A'/\m_{A'}^n \\
&\simeq U \otimes_A A'/\m_{A'}^n \\
&\simeq U \otimes_A A/\m_A^n \otimes_{A/\m_A^n} A'/\m_{A'}^n \\
&\simeq U/\m_A^nU \otimes_{A/\m_A^n} A'/\m_{A'}^n.
\end{align*}
Since $U/\m_A^nU$ is free over $A/\m_A^n$ this verifies that $(U \otimesh_A A') \otimes_{A'} A'/\m_{A'}^n$ is free over $A'/\m_{A'}^n$, and hence $U \otimesh_A A'$ is orthonormalizable.
\end{proof}

\begin{rem}
When $A'$ is finite over $A$, the tensor product $U \otimes_A A'$ is already $\m_{A'}$-adically complete; in this case the above lemma is \cite[Lem.~B.6 (4)]{Eme11}.
\end{rem}

\begin{lem} \label{lem:Hom}
For any $\m_A$-adically complete and separated $A$-modules $U,U'$, there is a natural $A$-linear isomorphism
\begin{equation*}
\Hom_A \left( U', U \right) \simeq \varprojlim_n \Hom_{A/\m_A^n} \left( U' / \m_A^n U', U / \m_A^n U \right).
\end{equation*}
\end{lem}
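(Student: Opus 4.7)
The plan is to combine two standard facts: first that $\mathrm{Hom}$ commutes with inverse limits in the second variable, and second that any $A$-linear map landing in a module killed by $\m_A^n$ factors through the reduction modulo $\m_A^n$ in the source.

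Concretely, I would first use the hypothesis that $U$ is $\m_A$-adically complete and separated, i.e.\ $U \iso \varprojlim_n U/\m_A^n U$, together with the universal property of inverse limits in the category of $A$-modules (equivalently, the left-exactness of $\Hom_A(U',-)$ applied to the canonical exact diagram computing the limit) to obtain the first natural isomorphism
\begin{equation*}
\Hom_A(U', U) \iso \varprojlim_n \Hom_A(U', U/\m_A^n U).
\end{equation*}
Then, for each fixed $n$, I would observe that $U/\m_A^n U$ is annihilated by $\m_A^n$, so for any $A$-linear map $f \colon U' \to U/\m_A^n U$ and any $a \in \m_A^n$, $u' \in U'$, we have $f(a u') = a f(u') = 0$. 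Hence $f$ factors uniquely through $U'/\m_A^n U'$, and moreover the induced map is automatically $A/\m_A^n$-linear. This yields the natural identifications
\begin{equation*}
\Hom_A(U', U/\m_A^n U) \iso \Hom_A(U'/\m_A^n U', U/\m_A^n U) \iso \Hom_{A/\m_A^n}(U'/\m_A^n U', U/\m_A^n U),
\end{equation*}
which are compatible with the transition maps in $n$. Assembling these and passing to the inverse limit gives the desired isomorphism.

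There is no real obstacle here: both steps are formal, the separatedness of $U$ ensures injectivity of the composite map (if all reductions vanish, the image sits in $\bigcap_n \m_A^n U = 0$), and its completeness ensures surjectivity (a compatible system $(f_n)$ assembles, via the identification $U \simeq \varprojlim_n U/\m_A^n U$, into a genuine $A$-linear map $U' \to U$). Naturality in both $U'$ and $U$ is clear from the construction. No hypothesis on $U'$ beyond being an abstract $A$-module is needed for this argument; the assumption that $U'$ is also $\m_A$-adically complete and separated is not used, although it is harmless and keeps the statement symmetric.
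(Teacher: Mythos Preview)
Your proof is correct and takes essentially the same approach as the paper, which simply records that $f \mapsto (f \bmod \m_A^n)_n$ and $(f_n) \mapsto \varprojlim_n f_n$ are mutually inverse $A$-linear maps. Your write-up unpacks exactly these two maps, and your observation that the completeness/separatedness hypothesis on $U'$ is unnecessary is also correct.
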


\begin{proof}
The $A$-linear maps given by $f\mapsto (f \bmod \m_A^n)_n$ and $(f_n)\mapsto\varprojlim_n f_n$ are inverses of each other.
\end{proof}

\subsection{Continuous parabolic induction}

Let $A \in \Noe(\OO)$.
We define several categories of representations of $G$ over $A$.

An $\m_A$-adically continuous $A[G]$-module is an $\m_A$-adically complete and separated $A$-module $\pi$ endowed with an $A$-linear $G$-action such that the map $G \times \pi \to \pi$ is continuous when $\pi$ is given its $\m_A$-adic topology (equivalently, the induced action of $G$ on $\pi/\m_A^n\pi$ is smooth for all $n \geq 1$).
We let $\Mod_G^{\m_A-\cont}(A)$ be the full subcategory of $\Mod_G(A)$ consisting of $\m_A$-adically continuous $A[G]$-modules.
We let $\Mod_G^{\m_A-\cont}(A)^\fl$ be the full subcategory of $\Mod_G^{\m_A-\cont}(A)$ consisting of orthonormalizable $A[G]$-modules.

\begin{rem} \label{rem:cont}
\begin{enumerate}
\item If $A$ is Artinian, then
\begin{equation*}
\Mod_G^{\m_A-\cont}(A) = \Mod_G^\infty(A) \quad \text{and} \quad \Mod_G^{\m_A-\cont}(A)^\fl = \Mod_G^\infty(A)^\fl.
\end{equation*}
\item Emerton defined an $\m_A$-adically admissible $A[G]$-module to be an $\m_A$-adically continuous $A[G]$-module $\pi$ such that $\pi/\m_A\pi$ is admissible as a smooth $k[G]$-module (cf. \cite[Def.~3.1.13]{Eme11}).
If $A$ is Artinian, then one recovers the usual definition of admissible smooth $A[G]$-module.
\end{enumerate}
\end{rem}

\begin{defn}
For any $\m_A$-adically continuous $A[L]$-module $\sigma$, we define an $A$-module
\begin{equation*}
\Ind_{P^-}^G \sigma \coloneqq \left\{ \text{continuous $f : G \to \sigma$} \middlevert \text{$f(pg) = pf(g)$ for all $p \in P^-$ and $g \in G$} \right\}
\end{equation*}
on which we let $G$ act by right translation.
\end{defn}

\begin{rem}
If $A$ is Artinian, then one recovers the smooth parabolic induction.
\end{rem}

\begin{prop} \label{prop:Indcont}
Let $\sigma$ be an $\m_A$-adically continuous $A[L]$-module.
\begin{enumerate}
\item $\Ind_{P^-}^G \sigma$ is an $\m_A$-adically continuous $A[G]$-module, and it is orthonormalizable if and only if $\sigma$ is orthonormalizable.
\item For any morphism $A \to A'$ in $\Noe(\OO)$, there is a natural $A[G]$-linear isomorphism
\begin{equation*}
\left( \Ind_{P^-}^G \sigma \right) \otimesh_A A' \iso \Ind_{P^-}^G \left( \sigma \otimesh_A A' \right).
\end{equation*}
\end{enumerate}
\end{prop}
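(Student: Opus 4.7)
The plan is to reduce both parts to their smooth (Artinian) counterparts established in Lemma \ref{lem:Ind}, by descending to the smooth $(A/\m_A^n)[L]$-modules $\sigma_n \coloneqq \sigma/\m_A^n \sigma$ and then passing to the inverse limit $\sigma \simeq \varprojlim_n \sigma_n$ afforded by $\m_A$-adic continuity.

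For part (1), the argument rests on two identifications. First, unfolding the definition of continuity gives an $A[G]$-linear isomorphism
\begin{equation*}
\Ind_{P^-}^G \sigma \simeq \varprojlim_n \Ind_{P^-}^G \sigma_n,
\end{equation*}
because a continuous $P^-$-equivariant $f : G \to \sigma = \varprojlim_n \sigma_n$ is the same data as a compatible system of locally constant $P^-$-equivariant functions $f_n : G \to \sigma_n$. Second, for $n \leq m$ one checks that
\begin{equation*}
\Ind_{P^-}^G \sigma_m / \m_A^n \Ind_{P^-}^G \sigma_m \simeq \Ind_{P^-}^G \sigma_n.
\end{equation*}
This is where one exploits the $A/\m_A^m$-linear decomposition $\Ind_{P^-}^G \sigma_m \simeq \CC^\infty(P^-\backslash G, A/\m_A^m) \otimes_{A/\m_A^m} \sigma_m$ from \eqref{CC}, together with the $A/\m_A^m$-freeness of $\CC^\infty(P^-\backslash G, A/\m_A^m)$ recalled in the proof of Lemma \ref{lem:Ind}: tensoring the short exact sequence $0 \to \m_A^n \sigma_m \to \sigma_m \to \sigma_n \to 0$ with a free module preserves exactness and matches the $\m_A$-filtrations. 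Combining the two identifications, $\Ind_{P^-}^G \sigma$ is the inverse limit of a system of smooth $(A/\m_A^n)[G]$-modules with surjective transition maps and quotients $(\Ind_{P^-}^G \sigma)/\m_A^n \simeq \Ind_{P^-}^G \sigma_n$, so it is $\m_A$-adically continuous. The orthonormalizability equivalence then follows level-by-level from Lemma \ref{lem:Ind}(1): $\sigma$ is orthonormalizable iff each $\sigma_n$ is free over $A/\m_A^n$ iff each $\Ind_{P^-}^G \sigma_n$ is free over $A/\m_A^n$ iff $\Ind_{P^-}^G \sigma$ is orthonormalizable.

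For part (2), given a morphism $A \to A'$ in $\Noe(\OO)$, choose for each $n$ an integer $k_n$ with $\m_A^{k_n} A' \subseteq \m_{A'}^n$, and apply Lemma \ref{lem:Ind}(2) to the induced local morphism $A/\m_A^{k_n} \to A'/\m_{A'}^n$ in $\Art(\OO)$ to obtain
\begin{equation*}
\bigl( \Ind_{P^-}^G \sigma_{k_n} \bigr) \otimes_{A/\m_A^{k_n}} A'/\m_{A'}^n \simeq \Ind_{P^-}^G \bigl( \sigma_{k_n} \otimes_{A/\m_A^{k_n}} A'/\m_{A'}^n \bigr).
\end{equation*}
Using the quotient identifications from part (1), the left-hand side is $(\Ind_{P^-}^G \sigma) \otimes_A A'/\m_{A'}^n$ and the right-hand side is $\Ind_{P^-}^G(\sigma \otimes_A A'/\m_{A'}^n)$. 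Taking the inverse limit over $n$ and invoking once more the identification $\Ind_{P^-}^G(\varprojlim) \simeq \varprojlim \Ind_{P^-}^G$ from the first step of part (1) yields the desired natural $A[G]$-linear isomorphism.

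The main obstacle is the quotient identification $(\Ind_{P^-}^G \sigma_m)/\m_A^n \simeq \Ind_{P^-}^G \sigma_n$ underlying the whole argument: this is really the only non-formal point, and it genuinely depends on the $A/\m_A^m$-freeness of $\CC^\infty(P^-\backslash G, A/\m_A^m)$, so that the $\m_A$-adic filtration on the smooth induction matches the induction of the $\m_A$-adic filtration on $\sigma$. Once this compatibility is secured, both parts reduce to bookkeeping with inverse limits.
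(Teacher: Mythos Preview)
Your overall strategy---reduce to the Artinian case via Lemma~\ref{lem:Ind} and pass to the inverse limit---is natural and is morally what the paper does, but there is a genuine gap at the step you treat as formal. From the two identifications
\[
\Ind_{P^-}^G \sigma \simeq \varprojlim_m \Ind_{P^-}^G \sigma_m
\qquad\text{and}\qquad
(\Ind_{P^-}^G \sigma_m)/\m_A^n \simeq \Ind_{P^-}^G \sigma_n
\]
one cannot simply ``combine'' to conclude $(\Ind_{P^-}^G \sigma)/\m_A^n \simeq \Ind_{P^-}^G \sigma_n$. Writing $M \coloneqq \varprojlim_m M_m$ with $M_m \coloneqq \Ind_{P^-}^G \sigma_m$, the kernel of the projection $M \twoheadrightarrow M_n$ is $\varprojlim_{m \geq n} \m_A^n M_m$, and what you need is that this equals $\m_A^n M$. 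Choosing generators $\m_A^n = (a_1,\ldots,a_r)$, this amounts to surjectivity of the map $M^r \to \varprojlim_m \m_A^n M_m$, $(x_i) \mapsto \sum_i a_i x_i$; at each finite level the analogous map $M_m^r \to \m_A^n M_m$ is onto, but lifting a compatible family $(y_m)_m$ to a compatible family of $r$-tuples is a nontrivial Mittag--Leffler problem when the $M_m$ are not finitely generated. The paper isolates exactly this point (its Step~3): it realizes $M$ as the $\m_A$-adic completion of $N \coloneqq \CC(P^-\backslash G, A) \otimes_A \sigma$ and then invokes a theorem of Yekutieli (\cite[Cor.~3.5]{Yek11}) asserting that over a Noetherian ring the $\m_A$-adic completion $\widehat{N}$ of \emph{any} module satisfies $\widehat{N}/\m_A^n \widehat{N} \simeq N/\m_A^n N$. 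This is the substantive input your argument is missing; by contrast, the finite-level identity $(\Ind_{P^-}^G \sigma_m)/\m_A^n \simeq \Ind_{P^-}^G \sigma_n$ that you flag as ``the main obstacle'' is indeed routine once one knows $\CC^\infty(P^-\backslash G, A/\m_A^m)$ is free.

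The same gap recurs in your treatment of part~(2): identifying the inverse limit of the left-hand sides with $(\Ind_{P^-}^G \sigma) \otimesh_A A'$ and of the right-hand sides with $\Ind_{P^-}^G(\sigma \otimesh_A A')$ again requires knowing $(\Ind_{P^-}^G \tau)/\m^n \simeq \Ind_{P^-}^G(\tau/\m^n\tau)$ over both $A$ and $A'$, i.e.\ precisely the missing step.
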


\begin{proof}
Let $U$ be the $A$-module underlying the representation $\sigma$.
By assumption $U$ is $\m_A$-adically complete and separated, that is $U=\varprojlim_n U/ \m_A^n U$.
Once and for all we choose a continuous section of the projection $G \twoheadrightarrow P^-\backslash G=:X$ as in the proof of Lemma \ref{lem:Ind}.
This allows us to identify $\Ind_{P^-}^G \sigma$ with $\CC(X,U)$ as an $A$-module.
We prove the Lemma in a series of steps below.

\medskip

\noindent\textbf{Step 1.}
$\CC(X,A) \otimes_A A/I \iso \CC(X,A/I)$ for any ideal $I \subset A$.

\medskip

This is done by induction on the minimal number of generators of $I$, say $n_I$.
The base case where $I=aA$ is principal amounts to the exactness of the two sequences
\begin{enumerate}[(a)]
\item $0 \longrightarrow \CC(X,A[a])\longrightarrow \CC(X,A) \overset{a}{\longrightarrow} \CC(X,aA)\longrightarrow 0$,
\item $0 \longrightarrow \CC(X,aA)\longrightarrow \CC(X,A) \longrightarrow \CC(X,A/aA)\longrightarrow 0$.
\end{enumerate}
The surjectivity in (a) follows from the existence of a continuous cross-section of the multiplication by $a$ map $A \twoheadrightarrow aA$.
Indeed it induces a homeomorphism $A/A[a] \iso aA$ (since $A$ is compact) and the projection $A \twoheadrightarrow A/A[a]$ admits a continuous section since $A$ is profinite and $A[a]$ is a closed subgroup (cf. \cite[Ch.~I, §~1, Thm.~3]{Sha72}).
For the same reason $A \twoheadrightarrow A/aA$ admits a continuous section, observing that $aA\subset A$ is closed, which shows the surjectivity in (b).

Now suppose $n_I>1$ and we know the result for ideals with fewer generators.
Once and for all we choose generators for $I$, say $I=(a_1,\ldots,a_n)$ where $n=n_I$ is minimal.
Consider the quotient ring $B=A/a_nA$ and the ideal $J=I/a_nA$.
Note that $J\subset B$ is generated by the cosets of $a_1,\ldots,a_{n-1}$, so by induction we have an isomorphism
\begin{equation*}
\CC(X,B) \otimes_B B/J \iso \CC(X,B/J).
\end{equation*}
Now $A/I\simeq B/J$ so certainly any $h \in \CC(X,A/I)$ can be lifted to $\CC(X,B)$, and in turn to $\CC(X,A)$ by the base case (or just sequence (b) with $a=a_n$), which shows that the map in Step 1 is onto.
We now argue that it is also injective.
Suppose $f \in \CC(X,A)$ takes values in $I\subset A$.
Consider the reduction $\bar{f}\in \CC(X,B)$, which then takes values in $J \subset B$.
By induction $\bar{f}=\sum_{i=1}^N \bar{c}_i \gamma_i$ with $c_i \in I$ and $\gamma_i \in \CC(X,B)$.
Again by (b) we can choose lifts, that is write $\gamma_i=\bar{f}_i$ where $f_i \in \CC(X,A)$.
Now the difference $f -\sum_{i=1}^N c_i f_i$ lies in $\CC(X,A)$ and reduces to $0$ modulo $a_nA$.
That is, it takes values in $a_nA$ and is therefore of the form $a_nf'$ for some $f' \in \CC(X,A)$ by sequence (a) above.
Altogether this shows that $f=a_nf'+\sum_{i=1}^N c_i f_i$ visibly belongs to $I \CC(X,A)$ as desired.

\medskip

\noindent\textbf{Step 2.}
We have an isomorphism $\CC(X,A) \otimesh_A U \iso\CC(X,U)$.

\medskip

The completed tensor product is defined as in \eqref{BCnoetherian}.
For any $n\geq 1$,
\begin{align*}
\CC(X,A) \otimes_A U/\m_A^nU &\simeq \CC(X,A) \otimes_A A/\m_A^n \otimes_{A/\m_A^n} U/\m_A^nU \\
&\simeq\CC(X,A/\m_A^n) \otimes_{A/\m_A^n} U/\m_A^nU \\
&\simeq \CC^\infty(X,A/\m_A^n) \otimes_{A/\m_A^n} U/\m_A^nU \\
&\simeq \CC^\infty(X,U/\m_A^nU) \\
&\simeq\CC(X,U/\m_A^nU).
\end{align*}
(In the second isomorphism we used Step 1; in the fourth we used \eqref{CC}.)
Taking the limit over $n$ yields the isomorphism.

\medskip

\noindent\textbf{Step 3.}
We have an isomorphism $\CC(X,U)/\m_A^n \CC(X,U) \iso \CC(X,U/\m_A^n U)$.

\medskip

For this step let $M \coloneqq \CC(X,A) \otimes_A U$, and let $\widehat{M}$ be its $\m_A$-adic completion.
By Step 2 (and its proof) we know that
\begin{itemize}
\item $M \otimes_A A/\m_A^n \simeq \CC(X,U/\m_A^nU)$,
\item $\widehat{M} \otimes_A A/\m_A^n \simeq \CC(X,U)/\m_A^n \CC(X,U)$.
\end{itemize}
It remains to observe that, since $A$ is Noetherian, the $\m_A$-adic completion of any $A$-module is $\m_A$-adically complete.
For finitely generated modules this is a standard application of the Artin-Rees lemma.
For infinitely generated modules such as our $M$ it is more subtle; it is the content of \cite[Cor.~3.5]{Yek11} (a result which Yekutieli attributes to Matlis, cf. Remark 3.7 in op. cit.).
We conclude that
\begin{equation*}
\widehat{M} \otimes_A A/\m_A^n \iso M \otimes_A A/\m_A^n
\end{equation*}
(cf. \cite[Thm.~1.5]{Yek11}) as desired.

\medskip

\noindent\textbf{Step 4.}
Proof of part (1) of the Proposition.

\medskip

We first observe that at least $\CC(X,U)$ is $\m_A$-adically complete and separated:
\begin{equation*}
\CC(X,U) \simeq \varprojlim_n \CC(X,U/\m_A^nU) \simeq \varprojlim_n \CC(X,U)/\m_A^n \CC(X,U)
\end{equation*}
by Step 3.
Furthermore
\begin{equation*}
\CC(X,U)/\m_A^n \CC(X,U)\simeq \CC(X,U/\m_A^nU)=\CC^\infty(X,U/\m_A^nU),
\end{equation*}
so that $\CC(X,U)/\m_A^n \CC(X,U)$ is free if and only if $U/\m_A^nU$ is free (using part (1) of Lemma \ref{lem:Ind}), hence part (1) of the Proposition.

\medskip

\noindent\textbf{Step 5.}
Proof of part (2) of the Proposition.

\medskip

For any morphism $A \to A'$ in $\Noe(\OO)$ we are to show that
\begin{equation*}
\CC(X,U) \otimesh_A A' \iso \CC(X,U \otimesh_A A').
\end{equation*}
By the same type of arguments as in Step 2 we find that for any $n \geq 1$,
\begin{align*}
\CC(X,U) \otimes_A A'/\m_{A'}^n &\simeq \CC(X,U) \otimes_A A/\m_A^n \otimes_{A/\m_A^n}A'/\m_{A'}^n \\
&\simeq \CC(X,U/\m_A^n U) \otimes_{A/\m_A^n}A'/\m_{A'}^n \\
&\simeq \CC^\infty(X,U/\m_A^n U) \otimes_{A/\m_A^n}A'/\m_{A'}^n \\
&\simeq\CC^\infty(X,U/\m_A^n U \otimes_{A/\m_A^n}A'/\m_{A'}^n ) \\
&\simeq\CC(X, (U\otimes_A A') \otimes_{A'} A'/\m_{A'}^n).
\end{align*}
(In the second isomorphism we used Step 3; in the fourth we used part (2) of Lemma \ref{lem:Ind}.)
Taking the limit over $n$ gives the result, which finishes the proof.
\end{proof}

\begin{rem} \label{rem:Indcont}
Part (2) with $A \twoheadrightarrow A/\m_A^n$ yields a natural $(A/\m_A^n)[G]$-linear isomorphism
\begin{equation*}
\left( \Ind_{P^-}^G \sigma \right) / \m_A^n \left( \Ind_{P^-}^G \sigma \right) \iso \Ind_{P^-}^G \left( \sigma / \m_A^n \sigma \right).
\end{equation*}
(This is precisely the content of Step 4 in the proof.)
This generalizes \cite[Lem.~4.1.3]{Eme10a} which treats the case $A=\OO$ where the argument simplifies significantly since $\m_A=(\varpi)$.
\end{rem}

\begin{cor}
Parabolic induction $\sigma \mapsto \Ind_{P^-}^G \sigma$ gives rise to fully faithful functors $\Mod_L^{\m_A-\cont}(A) \to \Mod_G^{\m_A-\cont}(A)$ and $\Mod_L^{\m_A-\cont}(A)^\fl \to \Mod_G^{\m_A-\cont}(A)^\fl$.
\end{cor}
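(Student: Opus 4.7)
The plan is to reduce full faithfulness to the Artinian case by passing to the successive quotients by powers of $\m_A$. The functoriality, and the fact that the functor indeed lands in $\Mod_G^{\m_A-\cont}(A)$ (respectively $\Mod_G^{\m_A-\cont}(A)^\fl$), is already contained in Proposition \ref{prop:Indcont}. Since $\Mod_L^{\m_A-\cont}(A)^\fl$ and $\Mod_G^{\m_A-\cont}(A)^\fl$ are full subcategories, it is enough to establish full faithfulness in the non-restricted setting.

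First I would apply Lemma \ref{lem:Hom} (upgraded to $G$-equivariant morphisms, which is harmless since $G$-invariants commute with inverse limits) to rewrite both Hom sets as limits of Hom sets over the Artinian quotients $A/\m_A^n$:
\begin{align*}
\Hom_{A[L]}(\sigma,\sigma') &\simeq \varprojlim_n \Hom_{(A/\m_A^n)[L]}\bigl(\sigma/\m_A^n\sigma,\sigma'/\m_A^n\sigma'\bigr),\\
\Hom_{A[G]}\bigl(\Ind_{P^-}^G \sigma,\Ind_{P^-}^G \sigma'\bigr) &\simeq \varprojlim_n \Hom_{(A/\m_A^n)[G]}\bigl((\Ind_{P^-}^G \sigma)/\m_A^n,(\Ind_{P^-}^G \sigma')/\m_A^n\bigr).
\end{align*}
Using Remark \ref{rem:Indcont} to identify $(\Ind_{P^-}^G \sigma)/\m_A^n(\Ind_{P^-}^G \sigma)$ with $\Ind_{P^-}^G(\sigma/\m_A^n\sigma)$ as an $(A/\m_A^n)[G]$-module (and similarly for $\sigma'$), the problem is reduced to showing that for each $n$ the natural map
\[
\Hom_{(A/\m_A^n)[L]}\bigl(\sigma/\m_A^n\sigma,\sigma'/\m_A^n\sigma'\bigr)\longrightarrow \Hom_{(A/\m_A^n)[G]}\bigl(\Ind_{P^-}^G(\sigma/\m_A^n\sigma),\Ind_{P^-}^G(\sigma'/\m_A^n\sigma')\bigr)
\]
is bijective --- this is the Artinian full faithfulness of $\Ind_{P^-}^G$ on the entire category $\Mod_L^\infty(A/\m_A^n)$.

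The main obstacle is the Artinian statement itself, since Lemma \ref{lem:defind} in the deformation-theoretic form above is phrased using $\Ord_P$, which is only a right adjoint after restriction to locally $Z_L$-finite (or admissible) representations, whereas the quotients $\sigma/\m_A^n\sigma$ carry no such finiteness property a priori. I would appeal instead to Remark \ref{Vig}: Vignéras' functor $\operatorname{R}_{P^-}^G$ is a right adjoint to $\Ind_{P^-}^G$ on the whole of $\Mod_L^\infty(A/\m_A^n)$, and the unit of that adjunction $\tau\iso \operatorname{R}_{P^-}^G(\Ind_{P^-}^G\tau)$ is an isomorphism for every smooth $\tau$. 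The bijectivity then follows from the formal fact that a functor with a right adjoint is fully faithful precisely when its unit is an isomorphism, by exactly the same argument used in the proof of Lemma \ref{lem:defind} (with $\operatorname{R}_{P^-}^G$ in place of $\Ord_P$ and without reference to the data $\phi,\phi'$). Passing to the inverse limit over $n$ completes the proof.
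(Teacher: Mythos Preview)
Your proof is correct and follows essentially the same route as the paper: reduce to the Artinian quotients via Lemma \ref{lem:Hom} and Remark \ref{rem:Indcont}, then invoke Vign\'eras' result for full faithfulness of smooth parabolic induction. The only cosmetic difference is that the paper cites \cite[Thm.~5.3]{Vig16} directly for the Artinian step, whereas you unpack it as the unit of the $(\Ind_{P^-}^G,\operatorname{R}_{P^-}^G)$-adjunction being an isomorphism (via Remarks \ref{rem:R} and \ref{Vig}); these are the same underlying fact.
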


\begin{proof}
Given any $\m_A$-adically continuous $A[L]$-modules $\sigma$ and $\sigma'$, we have $A$-linear isomorphisms
\begin{align*}
\Hom_{A[G]} \left( \Ind_{P^-}^G \sigma, \Ind_{P^-}^G \sigma' \right) &\cong \varprojlim_n \Hom_{A[G]} \left( \Ind_{P^-}^G \left( \sigma / \m_A \sigma \right), \Ind_{P^-}^G \left( \sigma' / \m_A \sigma' \right) \right) \\
&\cong \varprojlim_n \Hom_{A[L]} \left( \sigma / \m_A \sigma, \sigma' / \m_A \sigma' \right) \\
&\cong \Hom_{A[L]} \left( \sigma, \sigma' \right),
\end{align*}
cf. Lemma \ref{lem:Hom} (together with Remark \ref{rem:Indcont}) for the first and third ones, and \cite[Thm.~5.3]{Vig16} for the second one.
\end{proof}

\begin{rem}
We also deduce from Remark \ref{rem:Indcont} with $n=1$ that parabolic induction respects admissibility (cf. part (2) of Remark \ref{rem:cont}).
\end{rem}

\subsection{Deformation functors extended}\label{funcont}

We let $\bar{\pi}$ be a smooth $k[G]$-module.
We extend Definition \ref{def} to $A\in \Noe(\OO)$.

\begin{defn}\label{defcont}
We define several categories and functors.
\begin{enumerate}
\item A lift of $\bar{\pi}$ over $A \in \Noe(\OO)$ is a pair $(\pi,\phi)$ where
\begin{itemize}
\item $\pi$ is an object of $\Mod_G^{\m_A-\cont}(A)^\fl$,
\item $\phi: \pi \twoheadrightarrow \bar{\pi}$ is an $A[G]$-linear surjection with kernel $\m_A \pi$, i.e. which induces an $A[G]$-linear isomorphism $\pi \otimes_A k \iso \bar{\pi}$.
\end{itemize}
A morphism $\iota : (\pi,\phi) \to (\pi',\phi')$ of lifts of $\bar{\pi}$ over $A$ is an $A[G]$-linear morphism $\iota : \pi \to \pi'$ such that $\phi = \phi' \circ \iota$.
\item We define a covariant functor $\Deft_{\bar{\pi}} : \Noe(\OO) \to \Cat$ by letting $\Deft_{\bar{\pi}}(A)$ be the essentially small category of lifts of $\bar{\pi}$ over $A$ for any $A \in \Noe(\OO)$, and $\Deft_{\bar{\pi}}(\varphi) : \Deft_{\bar{\pi}}(A) \to \Deft_{\bar{\pi}}(A')$ be the base change functor for any morphism $\varphi : A \to A'$ in $\Noe(\OO)$.
\item We define the deformation functor $\Def_{\bar{\pi}} : \Noe(\OO) \to \Set$ as the composite $i \circ \Deft_{\bar{\pi}}$.
\end{enumerate}
\end{defn}

\subsection{Induced deformations extended}

We let $\bar{\sigma}$ be a smooth $k[L]$-module and we set $\bar{\pi} \coloneqq \Ind_{P^-}^G \bar{\sigma}$.
By functoriality and using part (1) of Proposition \ref{prop:Indcont}, we see that
\begin{itemize}
\item if $(\sigma,\phi)$ is a lift of $\bar{\sigma}$ over $A \in \Noe(\OO)$, then $(\Ind_{P^-}^G \sigma,\Ind_{P^-}^G \phi)$ is a lift of $\bar{\pi}$ over $A$,
\item and if $\iota : (\sigma,\phi) \to (\sigma',\phi')$ is a morphism of lifts of $\bar{\sigma}$ over $A \in \Noe(\OO)$, then $\Ind_{P^-}^G \iota : (\Ind_{P^-}^G \sigma,\Ind_{P^-}^G \phi) \to (\Ind_{P^-}^G \sigma',\Ind_{P^-}^G \phi')$ is a morphism of lifts of $\bar{\pi}$ over $A$.
\end{itemize}
Thus for any $A \in \Noe(\OO)$, we obtain a functor
\begin{equation*}
\Ind_{P^-}^G : \Deft_{\bar{\sigma}}(A) \to \Deft_{\bar{\pi}}(A)
\end{equation*}
which is functorial in $A$ by part (2) of Proposition \ref{prop:Indcont}, i.e. it is induced by a morphism $\Ind_{P^-}^G : \Deft_{\bar{\sigma}} \to \Deft_{\bar{\pi}}$ between functors $\Noe(\OO) \to \Cat$.
Thus composing the latter with $i : \Cat \to \Set$ yields a morphism
\begin{equation*}
\Ind_{P^-}^G : \Def_{\bar{\sigma}} \to \Def_{\bar{\pi}}
\end{equation*}
between functors $\Noe(\OO) \to \Set$.

\begin{lem} \label{lem:defindcont}
Let $\bar{\sigma}$ be a locally $Z_L$-finite smooth $k[L]$-module and $\bar{\pi} \coloneqq \Ind_{P^-}^G \bar{\sigma}$.
Then $\Ind_{P^-}^G : \Def_{\bar{\sigma}}(A) \to \Def_{\bar{\pi}}(A)$ is injective for any $A \in \Noe(\OO)$.
\end{lem}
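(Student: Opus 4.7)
The plan is to reduce the statement to the Artinian case already handled by Lemma \ref{lem:defind}, by passing to the quotients $A/\m_A^n$ and taking a limit.

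First I would take two lifts $(\sigma,\phi)$ and $(\sigma',\phi')$ of $\bar{\sigma}$ over $A \in \Noe(\OO)$ together with an isomorphism of lifts $\psi : \Ind_{P^-}^G \sigma \iso \Ind_{P^-}^G \sigma'$ of $\bar{\pi}$ over $A$. For each $n \geq 1$, reducing $\psi$ modulo $\m_A^n$ and applying the natural isomorphism from Remark \ref{rem:Indcont} produces an isomorphism of lifts of $\bar{\pi}$ over the Artinian ring $A/\m_A^n$:
\begin{equation*}
\psi_n : \Ind_{P^-}^G(\sigma/\m_A^n\sigma) \iso \Ind_{P^-}^G(\sigma'/\m_A^n\sigma').
\end{equation*}
Since $\sigma$ and $\sigma'$ are orthonormalizable, the modules $\sigma/\m_A^n\sigma$ and $\sigma'/\m_A^n\sigma'$ are free over $A/\m_A^n$ and thus define genuine lifts of $\bar{\sigma}$ over $A/\m_A^n$. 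Moreover, these lifts are locally $Z_L$-finite by Lemma \ref{lem:lifts}(1), because $\bar{\sigma}$ is.

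Next I would invoke the Artinian statement Lemma \ref{lem:defind}(1): the functor $\Ind_{P^-}^G : \Deft_{\bar{\sigma}}(A/\m_A^n) \to \Deft_{\bar{\pi}}(A/\m_A^n)$ is fully faithful. Consequently there is a \emph{unique} morphism of lifts $\alpha_n : \sigma/\m_A^n\sigma \to \sigma'/\m_A^n\sigma'$ over $A/\m_A^n$ with $\Ind_{P^-}^G \alpha_n = \psi_n$; it is an isomorphism since its image $\psi_n$ is and $\Ind_{P^-}^G$ is faithful on both directions. Uniqueness forces the family $(\alpha_n)_n$ to be compatible with the transition maps in the inverse system. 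Using Lemma \ref{lem:Hom}, I would assemble these into an $A[L]$-linear morphism
\begin{equation*}
\alpha \coloneqq \varprojlim_n \alpha_n : \sigma \to \sigma'
\end{equation*}
whose reduction mod $\m_A^n$ recovers $\alpha_n$.

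Finally I would check that $\alpha$ is an isomorphism of lifts of $\bar{\sigma}$ over $A$. It is bijective because both $\sigma$ and $\sigma'$ are $\m_A$-adically complete and separated and each $\alpha_n$ is bijective. The identity $\phi = \phi' \circ \alpha$ need only be verified modulo $\m_A$, where it amounts to $\alpha_1 = \id_{\bar{\sigma}}$ under the identifications $\sigma/\m_A\sigma \iso \bar{\sigma} \iso \sigma'/\m_A\sigma'$, which holds since $\alpha_1$ is a morphism of lifts. This yields $(\sigma,\phi) \simeq (\sigma',\phi')$ in $\Def_{\bar{\sigma}}(A)$. The main subtle point is the compatibility of the system $(\alpha_n)_n$: it rests crucially on the \emph{full faithfulness} half of Lemma \ref{lem:defind}(1), which provides uniqueness, rather than merely the injectivity of $\Def$ on isomorphism classes.
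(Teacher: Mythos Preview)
Your proposal is correct and follows essentially the same route as the paper's own proof: reduce modulo $\m_A^n$, invoke the full faithfulness in part (1) of Lemma \ref{lem:defind} to obtain a unique compatible system $(\alpha_n)_n$, and pass to the projective limit. The paper's argument is a little terser (it does not explicitly cite Lemma \ref{lem:Hom} or spell out the compatibility check), but the underlying mechanism is identical.
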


\begin{proof}
Let $A \in \Noe(\OO)$.
Let $(\sigma,\phi)$ and $(\sigma',\phi')$ be two lifts of $\bar{\sigma}$ over $A$.
We denote by $(\pi,\psi)$ and $(\pi',\psi')$ the corresponding induced lifts of $\bar{\pi}$ over $A$.
Assume that there is an isomorphism $\jmath : (\pi,\psi) \iso (\pi',\psi')$ in $\Deft_{\bar{\pi}}(A)$.
Then for all $n \geq 1$, there is an induced isomorphism $\jmath_n : (\pi/\m_A^n\pi,\psi_n) \iso (\pi'/\m_A^n\pi',\psi'_n)$ in $\Deft_{\bar{\pi}}(A/\m_A^n)$ where $\psi_n : \pi/\m_A^n \pi \twoheadrightarrow \bar{\pi},\psi'_n : \pi'/\m_A^n \pi' \twoheadrightarrow \bar{\pi}$ are induced by $\psi,\psi'$ respectively, thus $\jmath_n = \Ind_{P^-}^G \imath_n$ for a unique isomorphism $\imath_n : (\sigma/\m_A^n\sigma,\phi_n) \iso (\sigma'/\m_A^n\sigma',\phi'_n)$ in $\Deft_{\bar{\sigma}}(A/\m_A^n)$ by part (1) of Lemma \ref{lem:defind} where $\phi_n : \sigma/\m_A^n \sigma \twoheadrightarrow \bar{\sigma},\phi'_n : \sigma'/\m_A^n \sigma' \twoheadrightarrow \bar{\sigma}$ are induced by $\phi,\phi'$ respectively.
Taking the projective limit over $n \geq 1$ yields an isomorphism $\imath : (\sigma,\phi) \iso (\sigma',\phi')$ in $\Deft_{\bar{\sigma}}(A)$ such that $\jmath = \Ind_{P^-}^G \imath$.
\end{proof}

\begin{thm} \label{thm:defindcont}
Let $\bar{\sigma}$ be an admissible smooth $k[L]$-module and $\bar{\pi} \coloneqq\Ind_{P^-}^G \bar{\sigma}$.
If $F=\Q_p$, then assume that
\begin{enumerate}[(a)]
\item Hypothesis \ref{hyp:HOrd} is satisfied,
\item $\Hom_L(\bar{\sigma},\bar{\sigma}^\alpha \otimes (\bar{\varepsilon}^{-1} \circ \alpha))=0$ for all $\alpha \in \Delta_L^{\perp,1}$.
\end{enumerate}
Then $\Ind_{P^-}^G : \Def_{\bar{\sigma}} \to \Def_{\bar{\pi}}$ is an isomorphism.
\end{thm}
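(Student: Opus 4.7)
The plan is to reduce to the Artinian case of Theorem \ref{thm:defind} via an inverse-limit argument. Given Lemma \ref{lem:defindcont}, which already supplies injectivity of $\Ind_{P^-}^G : \Def_{\bar{\sigma}}(A) \to \Def_{\bar{\pi}}(A)$ for every $A \in \Noe(\OO)$, only essential surjectivity remains. So fix $A \in \Noe(\OO)$, a lift $(\pi,\phi)$ of $\bar{\pi}$ over $A$, and set $A_n := A/\m_A^n$ and $(\pi_n,\phi_n) := (\pi/\m_A^n\pi,\phi \bmod \m_A^n)$, a lift of $\bar{\pi}$ over the Artinian ring $A_n$ (note that $\pi_n$ is admissible by Lemma \ref{lem:lifts}(2), since $\bar{\pi}$ is).

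First, for each $n$ I would apply Theorem \ref{thm:defind}(1) (whose hypotheses carry over verbatim) to obtain a lift $(\sigma_n,\psi_n)$ of $\bar{\sigma}$ over $A_n$—concretely, $\sigma_n := \Ord_P \pi_n$ endowed with the lift structure $\psi_n := \epsilon_{\bar{\sigma}}^{-1}\circ \Ord_P \phi_n$—together with an isomorphism of lifts $\eta_{\pi_n}: \Ind_{P^-}^G \sigma_n \iso \pi_n$. To see that the $\sigma_n$ form a coherent tower, I would base change the isomorphism for $\sigma_{n+1}$ along $A_{n+1} \twoheadrightarrow A_n$: by Lemma \ref{lem:Ind}(2) one obtains $\Ind_{P^-}^G(\sigma_{n+1}\otimes_{A_{n+1}}A_n) \simeq \pi_n$, so $\sigma_{n+1}\otimes_{A_{n+1}}A_n$ is another lift of $\bar{\sigma}$ over $A_n$ inducing $\pi_n$; by the injectivity in Theorem \ref{thm:defind}(2) it is canonically isomorphic to $\sigma_n$ as a lift, providing the desired transition maps $\sigma_{n+1} \to \sigma_n$.

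Setting $\sigma := \varprojlim_n \sigma_n$, Lemma \ref{onb}(4) yields an orthonormalizable $A$-module with $\sigma/\m_A^n\sigma \iso \sigma_n$; the compatible smooth $L$-actions on the $\sigma_n$ combine into an $\m_A$-adically continuous $A[L]$-action, and the $\psi_n$ glue to a surjection $\psi : \sigma \twoheadrightarrow \bar{\sigma}$ with kernel $\m_A \sigma$, so $(\sigma,\psi)$ is a lift of $\bar{\sigma}$ over $A$. Finally, Remark \ref{rem:Indcont} gives a compatible family of $A_n[G]$-linear isomorphisms $(\Ind_{P^-}^G\sigma)/\m_A^n(\Ind_{P^-}^G\sigma) \iso \Ind_{P^-}^G\sigma_n \xrightarrow{\,\eta_{\pi_n}\,} \pi_n$; passing to the limit, and invoking Lemma \ref{lem:Hom} together with the $\m_A$-adic completeness of both $\Ind_{P^-}^G \sigma$ and $\pi$, yields the desired isomorphism of lifts $\Ind_{P^-}^G \sigma \iso \pi$. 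The main obstacle is precisely the coherence of the tower $\{\sigma_n\}$: Theorem \ref{thm:defind} produces each $\sigma_n$ only up to isomorphism, and one needs the canonicity of the base-change identifications $\sigma_{n+1}\otimes_{A_{n+1}}A_n \iso \sigma_n$ (and of the $\eta_{\pi_n}$ under transitions) to obtain a genuine inverse system whose limit acquires a well-defined $\m_A$-adically continuous $A[L]$-structure. This canonicity is furnished by the injectivity/fully-faithfulness content of Theorem \ref{thm:defind}(1)–(2).
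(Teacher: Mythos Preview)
Your proposal is correct and follows essentially the same inverse-limit strategy as the paper: reduce surjectivity to the Artinian case via Theorem~\ref{thm:defind}, use the full faithfulness of $\Ind_{P^-}^G$ (equivalently, the adjoint $\Ord_P$) to obtain coherent transition maps $\sigma_{n+1}\twoheadrightarrow\sigma_n$, and invoke Lemma~\ref{onb}(4) to conclude that $\sigma=\varprojlim_n\sigma_n$ is orthonormalizable. The paper phrases the coherence step slightly differently---it transports the natural surjection $\pi_{n+1}\twoheadrightarrow\pi_n$ through the isomorphisms $\iota_n$ to get $\varrho_n:\Ind_{P^-}^G\sigma_{n+1}\twoheadrightarrow\Ind_{P^-}^G\sigma_n$ and then applies $\Ord_P$ directly to descend to $\rho_n$---whereas you base-change and appeal to full faithfulness; these are the same argument in different clothing.
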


\begin{proof}
Given Lemma \ref{lem:defindcont}, we only have to prove surjectivity.
Let $A \in \Noe(\OO)$.
Let $(\pi,\psi)$ be a lift of $\bar{\pi}$ over $A$.
For all $n \geq 1$, $(\pi/\m_A^n \pi,\psi_n)$ where $\psi_n : \pi/\m_A^n \pi \twoheadrightarrow \bar{\pi}$ is induced by $\psi$ is an object of $\Deft_{\bar{\pi}}(A / \m_A^n A)$, thus by part (1) of Theorem \ref{thm:defind} there exists a lift $(\sigma_n,\phi_n)$ of $\bar{\sigma}$ over $A/\m_A^n$ and an $A[G]$-linear isomorphism $\iota_n : \pi / \m_A^n \pi \iso \Ind_{P^-}^G \sigma_n$ such that $\psi_n = \Ind_{P^-}^G \phi_n \circ \iota_n$.
In particular, we get an $A[G]$-linear surjection $\varrho_n : \Ind_{P^-}^G \sigma_{n+1} \twoheadrightarrow \Ind_{P^-}^G \sigma_n$ with kernel $\m_A^n (\Ind_{P^-}^G \sigma_{n+1})$ and such that $\Ind_{P^-}^G \phi_{n+1} = (\Ind_{P^-}^G \phi_n) \circ \varrho_n$.
Using $\Ord_P$, we see that $\varrho_n = \Ind_{P^-}^G \rho_n$ where $\rho_n : \sigma_{n+1} \twoheadrightarrow \sigma_n$ is an $A[L]$-linear surjection with kernel $\m_A^n \sigma_{n+1}$ and such that $\phi_{n+1} = \phi_n \circ \rho_n$.
Taking projective limit over $n \geq 1$ yields a lift $(\sigma,\phi)$ of $\bar{\sigma}$ over $A$ and an $A[G]$-linear isomorphism $\iota : \pi \iso \Ind_{P^-}^G \sigma$ such that $\psi = \Ind_{P^-}^G \phi \circ \iota$.
Note that the limit $\sigma=\varprojlim_n \sigma_n$ is orthonormalizable by part (4) of Lemma \ref{onb}.
\end{proof}

\subsection{Application to Banach lifts}

We let $\Ban_G(E)$ denote the category of $E$-Banach representations of $G$ (with continuous $E[G]$-linear morphisms).
Thus its objects are Banach spaces $V$ over $E$ endowed with a jointly continuous $E$-linear action $G \times V \to V$.
We say that $V$ is unitary if its topology can be defined by a $G$-invariant norm; we write $\Ban_G(E)^\unit$ for the full subcategory consisting of unitary representations.

Following \cite[§~6.1]{Sch13} we denote by $\Ban_G(E)^{\leq 1}$ the category whose objects are $E$-Banach representations $(V,\|\cdot\|)$ of $G$ for which $\|V\|\subset |E|$ and $\|gv\|=\|v\|$ for all $g \in G$ and $v \in V$; the morphisms are the $E[G]$-linear norm-decreasing maps.
Passing to the isogeny category gives an equivalence (cf. \cite[Lem.~6.1]{Sch13} and the pertaining remark)
\begin{equation*}
\Big(\Ban_G(E)^{\leq 1}\Big)_{\Q} \iso\Ban_G(E)^\unit.
\end{equation*}

Let $(V,\|\cdot\|)$ be an object of $\Ban_G(E)^{\leq 1}$.
We use the previous definitions for $A=\OO$ with $\varpi$ instead of $\m_\OO$ in the notation.
Note that a $\varpi$-adically complete and separated $\OO$-module is orthonormalizable if and only if it is $\varpi$-torsion-free.
The unit ball $V^\circ \coloneqq \{v\in V: \|v\|\leq 1\}$ is an object of $\Mod_G^{\varpi-\cont}(\OO)^\fl$.
The functor $V \mapsto V^\circ$ yields an equivalence of categories
\begin{equation}\label{banmod}
\Ban_G(E)^{\leq 1} \iso \Mod_G^{\text{$\varpi$--cont}}(\OO)^{\text{fl}}
\end{equation}
(a quasi-inverse is given by $V^\circ \mapsto V \coloneqq V^\circ[1/p]=E \otimes_{\OO} V^\circ$ where $V$ is equipped with the gauge norm $\|v\|_{V^\circ} \coloneqq q^{-\ord_{V^\circ}(v)}$ where $\ord_{V^\circ}(v)$ is the largest integer $n$ for which $v \in \varpi^n V^\circ$).
Finally, the reduction mod $\varpi$ is an object of $\Mod_G^\infty(k)$ denoted
\begin{equation*}
\bar{V} \coloneqq V^\circ/\varpi V^\circ.
\end{equation*}
Turning the table, given an object $\bar{\pi}$ of $\Mod_G^\infty(k)$, we consider all the $E$-Banach representations $V$ of $G$ for which $\bar{V}\simeq \bar{\pi}$; the Banach lifts of $\bar{\pi}$.
Using \eqref{banmod}, we see that Banach lifts are the same as lifts over $\OO$.

\medskip

For any $E$-Banach representation $V$ of $L$, we define an $E$-Banach representation by letting $G$ act by right translation on the $E$-Banach space
\begin{equation*}
\Ind_{P^-}^G V \coloneqq \left\{ \text{continuous $f : G \to V$} \middlevert \text{$f(pg) = pf(g)$ for all $p \in P^-$ and $g \in G$} \right\}.
\end{equation*}
If $\|\cdot\|$ is an $L$-invariant norm with unit ball $V^\circ$, then the gauge norm associated to $\Ind_{P^-}^G V^\circ \subset \Ind_{P^-}^G V$ is $G$-invariant.
Thus, we obtain a functor
\begin{equation*}
\Ind_{P^-}^G : \Ban_L(E)^{\leq 1} \to \Ban_G(E)^{\leq 1}.
\end{equation*}
which corresponds to the continuous parabolic induction functor over $\OO$ under the equivalences \eqref{banmod} for $G$ and $L$.

\medskip

We conclude that Theorem \ref{thm:defindcont} with $A=\OO$ can be reformulated as follow.

\begin{cor} \label{cor:Ban}
With notation and assumptions as in Theorem \ref{thm:defindcont} above, every Banach lift of $\bar{\pi}$ is induced from a unique Banach lift of $\bar{\sigma}$ (up to isomorphism).
\end{cor}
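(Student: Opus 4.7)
The plan is to translate the statement entirely through the equivalence \eqref{banmod} and then invoke Theorem \ref{thm:defindcont} with $A = \OO$. First, given a Banach lift $V$ of $\bar{\pi}$, form the unit ball $V^\circ$ relative to a $G$-invariant norm; by \eqref{banmod} this is an object of $\Mod_G^{\varpi-\cont}(\OO)^\fl$, and since $V^\circ/\varpi V^\circ \simeq \bar{\pi}$, the pair $(V^\circ,\phi)$ is a lift of $\bar{\pi}$ over $\OO$ in the sense of Definition \ref{defcont}. The assignment $V \mapsto (V^\circ,\phi)$ induces a bijection between isomorphism classes of Banach lifts of $\bar{\pi}$ and the set $\Def_{\bar{\pi}}(\OO)$, and likewise for $\bar{\sigma}$.

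Next, I would verify that this bijection intertwines the two versions of parabolic induction. For $W \in \Ban_L(E)^{\leq 1}$ with unit ball $W^\circ$, the unit ball of $\Ind_{P^-}^G W$ (for the gauge norm) is precisely $\Ind_{P^-}^G W^\circ$ by the very definition recalled just above the corollary. Hence under \eqref{banmod}, the Banach-space functor $\Ind_{P^-}^G : \Ban_L(E)^{\leq 1} \to \Ban_G(E)^{\leq 1}$ corresponds to the continuous parabolic induction functor $\Ind_{P^-}^G : \Mod_L^{\varpi-\cont}(\OO)^\fl \to \Mod_G^{\varpi-\cont}(\OO)^\fl$, so the induced maps on sets of isomorphism classes agree with $\Ind_{P^-}^G : \Def_{\bar{\sigma}}(\OO) \to \Def_{\bar{\pi}}(\OO)$.

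Finally, Theorem \ref{thm:defindcont} applied to $A = \OO$ (whose hypotheses are exactly those restated in Corollary \ref{cor:Ban}) asserts that the latter map is a bijection. Chasing a given Banach lift $V$ of $\bar{\pi}$ through this bijection produces a unique class of lifts $(\sigma,\phi)$ of $\bar{\sigma}$ over $\OO$ with $(\Ind_{P^-}^G \sigma,\Ind_{P^-}^G \phi) \simeq (V^\circ,\phi_V)$ in $\Deft_{\bar{\pi}}(\OO)$; passing back through \eqref{banmod} and inverting $p$ yields a Banach lift $W$ of $\bar{\sigma}$, unique up to isomorphism, such that $\Ind_{P^-}^G W \simeq V$. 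Surjectivity of the induction map on $\Def_{\bar{\cdot}}(\OO)$ supplies existence, and its injectivity supplies uniqueness.

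I expect the only non-cosmetic point to be the compatibility check in the second paragraph: one must ensure that the equivalence \eqref{banmod} respects the parabolic induction functors, i.e.\ that taking the unit ball of $\Ind_{P^-}^G V$ indeed gives $\Ind_{P^-}^G V^\circ$ (and not a strictly larger lattice), which relies on the fact that $\Ind_{P^-}^G$ preserves freeness over $\OO$ (part (1) of Proposition \ref{prop:Indcont}) so that the gauge norm on $\Ind_{P^-}^G V$ attached to $\Ind_{P^-}^G V^\circ$ is $G$-invariant and has the correct unit ball. Once this identification is in place, the corollary is a direct reformulation of Theorem \ref{thm:defindcont} at $A = \OO$.
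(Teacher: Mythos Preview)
Your proposal is correct and follows exactly the paper's approach: the corollary is obtained by identifying Banach lifts with lifts over $\OO$ via \eqref{banmod}, checking that this equivalence intertwines the two parabolic induction functors, and then invoking Theorem \ref{thm:defindcont} at $A=\OO$. The paper in fact presents this as a direct reformulation rather than a formal proof, so your write-up simply spells out the steps already assembled in the paragraphs preceding the corollary.
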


\section{Parabolic induction and deformations over profinite rings}

\subsection{Augmented representations}

Let $A \in \Pro(\OO)$.
We generalize the definitions of \cite[§~2.1]{Eme10a} (which only considers Noetherian $A$).
Our definitions coincide with those of \cite{Sch13} (in terms of pseudocompact objects) since the residue field $k$ is finite.

A profinite (linear-topological) $A$-module $M$ is a profinite $A$-module such that there exists a fundamental system of open neighborhoods of $0$ consisting of $A$-submodules.

For example, the topological direct product $A^I$ is profinite for any set $I$ (a fundamental system is $\af^{I'}\times A^{I\backslash I'}$ for all finite subsets $I'\subset I$ and open ideals $\af\subset A$).
We say $M$ is topologically free if there is an isomorphism $M \simeq A^I$ of topological $A$-modules for some set $I$.
The subset of $M$ corresponding to the standard vectors in $A^I$ is then called a pseudobasis.

If $M$ is a profinite $A$-module and $A \to A'$ is a morphism in $\Pro(\OO)$, we define the corresponding base change, a profinite $A'$-module, as the completed tensor product
\begin{equation}\label{BCprofinite}
M \otimesh_A A' \coloneqq \varprojlim_{M',\af'} M/M' \otimes_A A'/\af'
\end{equation}
where $M' \subset M$ run over the open $A$-submodules and $\af' \subset A'$ runs over the open ideals.
For any set $I$, one has $A^{I}\otimesh_A A'=\varprojlim_{\af,I',\af'} (A/\af)^{I'} \otimes_A A'/\af'=(A')^{I}$, hence base change preserves topologically free modules.

For a compact open subgroup $K \subset G$, we let $A[[K]]$ be the completed group algebra with coefficients in $A \in \Pro(\OO)$.
That is,
\begin{equation*}
A[[K]]=\varprojlim_{K'} A[K/K']
\end{equation*}
where $K' \subset K$ runs over the open normal subgroups.
It is a profinite ring and the actual group algebra $A[K]$ sits as a dense subring in $A[[K]]$.

\begin{defn}
A profinite augmented representation of $G$ over $A$ is a profinite $A$-module $M$ with an $A$-linear $G$-action such that for some (equivalently any) compact open subgroup $K\subset G$, the induced $K$-action extends to a map $A[[K]] \times M \to M$ which is continuous.
\end{defn}

The morphisms between two profinite augmented $G$-representations are the $A[G]$-linear continuous maps (such a map is automatically $A[[K]]$-linear for any compact open subgroup $K \subset G$).
This defines a category $\Mod_G^\proaug(A)$.
We let $\Mod_G^\proaug(A)^\fl$ be the full subcategory of objects topologically free over $A$.
Given a morphism $A \to A'$ in $\Pro(\OO)$, we have $A'[[K]]=A[[K]] \otimesh_A A'$ and so $M\mapsto M \otimesh_A A'$ induces a functor $\Mod_G^\proaug(A)^\fl \to \Mod_G^\proaug(A')^\fl$.

As a preliminary observation we note that the continuity of $A[[K]] \times M \to M$ implies an a priori stronger statement (since $A$ and $M$ are profinite):

\begin{lem}\label{lineartop}
Any profinite augmented representation $M$ admits a fundamental system of open neighborhoods of $0$ consisting of $A[[K]]$-submodules.
\end{lem}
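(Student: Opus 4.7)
The plan is to leverage the compactness of $A[[K]]$ together with continuity of the action to upgrade the given fundamental system of open $A$-submodules to one consisting of open $A[[K]]$-submodules. Since $M$ is profinite linear-topological by definition, the open $A$-submodules already form a fundamental system of neighborhoods of $0$, so it suffices to show: for any open $A$-submodule $N \subset M$, there exists an open $A[[K]]$-submodule $N' \subset N$.

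The key step is a compactness argument on $A[[K]]$. Fix an open $A$-submodule $N$. For each $r \in A[[K]]$ the continuity of the action map $\mu : A[[K]] \times M \to M$ at $(r,0)$, together with the fact that the open $A$-submodules of $M$ are a neighborhood basis of $0$, furnishes an open neighborhood $U_r \subset A[[K]]$ of $r$ and an open $A$-submodule $V_r \subset M$ with $\mu(U_r \times V_r) \subset N$. Since $A[[K]]$ is compact, we extract a finite subcover $U_{r_1}, \dots, U_{r_n}$ and set $V \coloneqq \bigcap_{i=1}^n V_{r_i}$, which is still an open $A$-submodule. By construction, $r \cdot V \subset N$ for every $r \in A[[K]]$.

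Finally, let $N'$ be the $A[[K]]$-submodule of $M$ generated by $V$, i.e.\ the $A$-submodule spanned by $\{rv : r \in A[[K]], v \in V\}$. All such products $rv$ lie in $N$ by the previous step, and $N$ is closed under addition, so $N' \subset N$. Moreover $1 \in A[[K]]$ forces $V \subset N'$, so $N'$ is open, and it is an $A[[K]]$-submodule by construction. This produces the desired open $A[[K]]$-submodule inside $N$.

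The only real obstacle is bookkeeping around the continuity/compactness extraction; no deeper input is needed beyond the hypothesis that $A[[K]] \times M \to M$ is continuous and the standard fact that $A[[K]]$ is profinite, hence compact.
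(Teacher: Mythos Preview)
Your proof is correct, but it follows a different route from the paper's. The paper works entirely with the $K$-action: given an open $A$-submodule $M'$, it first finds a compact open subgroup $K'\subset K$ with $k'M'=M'$ for all $k'\in K'$ (via continuity of $K\times M\to M$ at finitely many points $(e,m)$ with $m$ ranging over coset representatives of a smaller open submodule $M''\subset M'$), and then sets $\tilde{M}=\bigcap_{k\in K/K'} kM'$, a finite intersection which is open and $A[K]$-stable, hence (being closed) $A[[K]]$-stable by density. Your argument instead exploits compactness of the entire ring $A[[K]]$ to produce an open $A$-submodule $V$ with $A[[K]]\cdot V\subset N$, and then takes $N'=A[[K]]\cdot V$. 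This is precisely the general argument valid for any profinite ring acting continuously on a profinite linear-topological module, which the paper alludes to in the remark following the lemma (citing \cite{Wil98}). Your approach is a bit more streamlined; the paper's approach makes the group-algebra structure visible and yields an explicit description of the submodule as a finite intersection of $K$-translates.
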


\begin{proof}
Let $M'\subset M$ be an arbitrary open $A$-submodule (thus $M/M'$ is finite).
Suppose we can find a compact open subgroup $K' \subset K$ such that $k'M'=M'$ for all $k' \in K'$.
Then
\begin{equation*}
\tilde{M} \coloneqq \bigcap_{k \in K} kM'=\bigcap_{k\in K/K'} \bigcap_{k' \in K'} kk'M'=\bigcap_{k\in K/K'} kM'
\end{equation*}
is a finite intersection, hence open, and $\tilde{M} \subset M'$ is clearly an $A[[K]]$-submodule of $M$.
It remains to show the existence of such a $K'$.
By continuity of $K \times M \to M$ at the point $(e,0)$ we can at least find a $K'' \leq K$ (compact open) and $M'' \subset M'$ (open $A$-submodule) such that $K'' \times M'' \to M'$.
Now $M'/M''$ is finite, say
\begin{equation*}
M'=\bigcup_{m \in R} (m+M''), \quad |R|<\infty.
\end{equation*}
By continuity of $K \times M \to M$ at the point $(e,m)$ we find a compact open $K_m \leq K''$ such that $K_m \times \{m\} \to m+M''$.
In particular $K_m m \subset M'$.
We may then take $K' \coloneqq \bigcap_{m \in R} K_m$ which is compact and open since $R$ is finite.
\end{proof}

\begin{rem}
The preceding observation extends in fact to arbitrary profinite rings \cite[Prop.~7.2.1]{Wil98}.
We found it useful to recall the details in our situation.
\end{rem}

\subsection{Duality over Noetherian rings}

We fix $A \in \Noe(\OO)$.
We discuss the following duality between the categories $\Mod_G^{\m_A-\cont}(A)^\fl$ and $\Mod_G^\proaug(A)^\fl$:
\begin{gather*}
\pi \mapsto \pi^\vee \coloneqq \Hom_A \left( \pi, A \right) \text{ with the topology of pointwise convergence}, \\
M \mapsto M^\vee \coloneqq \Hom_A^\cont \left( M, A \right) \text{ with the $\m_A$-adic topology}.
\end{gather*}

\begin{lem}\label{ad}
Taking $A$-linear duals defines an anti-equivalence of categories
\begin{equation*}
(-)^{\vee}: \Mod_G^{\m_A-\cont}(A)^\fl \iso \Mod_G^\proaug(A)^\fl
\end{equation*}
compatible with base change.
\end{lem}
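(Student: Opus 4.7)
The plan is to construct both functors explicitly in terms of the finite levels provided by Lemma \ref{onb}(4), then check biduality, functoriality, and the base change compatibility.

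First, for $\pi \in \Mod_G^{\m_A-\cont}(A)^{\fl}$, present $\pi \simeq \varprojlim_n \pi_n$ with $\pi_n = \pi/\m_A^n \pi$ free over $A/\m_A^n$ and carrying compatible smooth $G$-actions. Observe that any $A$-linear map $f \colon \pi \to A$ is automatically continuous, because $A$-linearity forces $f(\m_A^n \pi) \subset \m_A^n$ while $\m_A^n \pi$ is open in $\pi$. Hence
\begin{equation*}
\pi^{\vee} = \Hom_A(\pi, A) \simeq \varprojlim_n \Hom_{A/\m_A^n}(\pi_n, A/\m_A^n),
\end{equation*}
where each factor is a topological product $\prod_{I_n} A/\m_A^n$ relative to a basis of $\pi_n$, equipped with the contragredient smooth $G$-action. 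The transition maps being surjective, the inverse limit is a topologically free profinite $A$-module, and continuity of the resulting $A[[K]]$-action follows from smoothness at each finite level together with Lemma \ref{lineartop}.

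Second, to build the quasi-inverse, attach to $M \in \Mod_G^\proaug(A)^{\fl}$ the module $M^{\vee} = \Hom_A^\cont(M, A)$ equipped with the $\m_A$-adic topology. Using Lemma \ref{lineartop} to pick a fundamental system of open $A[[K]]$-submodules $M^{(j)} \subset M$, continuous $A$-linear maps $M \to A/\m_A^n$ factor through the finite quotients $M/(\m_A^n M + M^{(j)})$, so
\begin{equation*}
M^{\vee} \simeq \varprojlim_n \Hom_A^\cont(M, A/\m_A^n).
\end{equation*}
For a pseudobasis realizing $M \simeq A^I$, the continuous maps $M \to A/\m_A^n$ correspond to functions $I \to A/\m_A^n$ of finite support. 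Taking the limit over $n$ identifies $M^{\vee}$ with the decaying functions $I \to A$, which is orthonormalizable by Lemma \ref{onb}(3), and the dual $G$-action is smooth at each level, placing $M^{\vee}$ in $\Mod_G^{\m_A-\cont}(A)^{\fl}$.

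Third, the biduality morphisms $\pi \to \pi^{\vee\vee}$ and $M \to M^{\vee\vee}$ are the standard evaluation maps; to check they are isomorphisms I would reduce modulo $\m_A^n$ and observe that at each level one recovers the usual (anti-)equivalence between free $A/\m_A^n$-modules and their linear duals (the pairing being perfect on each finite-free piece, and extending to arbitrary index sets by products/coproducts). Functoriality is formal, yielding an anti-equivalence of categories.

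The most delicate point, which I expect to require the most bookkeeping, is base change compatibility: for any morphism $A \to A'$ in $\Noe(\OO)$ one must produce a natural isomorphism
\begin{equation*}
(\pi \otimesh_A A')^{\vee} \simeq \pi^{\vee} \otimesh_A A',
\end{equation*}
where the completed tensor product on the left is the $\m_{A'}$-adic completion \eqref{BCnoetherian} while on the right it is the profinite one \eqref{BCprofinite}. The strategy is again to reduce to finite levels: by the argument of Step 3 in the proof of Proposition \ref{prop:Indcont}, $(\pi \otimesh_A A')/\m_{A'}^n \simeq \pi_n \otimes_{A/\m_A^n} A'/\m_{A'}^n$, after which duality over the Artinian ring $A'/\m_{A'}^n$ commutes with the tensor product (both sides become a product $\prod_{I_n} A'/\m_{A'}^n$). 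Passing to the limit over $n$ matches the two formulas defining $\otimesh_A A'$ on either side, giving the desired isomorphism and completing the proof.
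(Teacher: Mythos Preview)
Your proposal is correct and follows essentially the same reduction-to-finite-levels strategy as the paper. The main organizational difference is that the paper delegates the underlying module-theoretic anti-equivalence (orthonormalizable $\leftrightarrow$ topologically free profinite) to \cite[Prop.~B.11]{Eme11} and the key isomorphism \eqref{topiso}, then concentrates on proving carefully that smoothness of the $G$-action on $\pi/\m_A^n\pi$ is \emph{equivalent} to $(\pi/\m_A^n\pi)^\vee$ being a profinite augmented representation; you instead unfold the module theory directly via decaying functions and Lemma~\ref{onb}. Two small points: your invocation of Lemma~\ref{lineartop} in step~1 is misplaced (the direction ``smooth $\Rightarrow$ augmented'' does not need it---it follows from writing $(\pi_n)^\vee = \varprojlim_{K'} ((\pi_n)^{K'})^\vee$); the place it is genuinely needed is in step~2, where ``the dual $G$-action is smooth at each level'' deserves the extra sentence that since $M/(\m_A^n M + M^{(j)})$ is a \emph{finite} $A[[K]]$-module the $K$-action factors through some $K/K'$, fixing $f$. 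For base change, both you and the paper use the same computation; the reason the algebraic tensor product $\Hom_{A_n}(\pi_n,A_n)\otimes_{A_n} A'/\m_{A'}^n$ agrees with the product $\prod_I A'/\m_{A'}^n$ is that $A'/\m_{A'}^n$ is a \emph{finite} $A_n$-module (both are Artinian with residue field $k$), which is worth making explicit.
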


\begin{proof}
We remove the admissibility assumption from the arguments in the proof of \cite[Prop.~3.1.12]{Eme11}. Let $K\subset G$ be a fixed compact open subgroup.
To simplify the notation, we let $A_n \coloneqq A/\m_A^nA$ for all $n \geq 1$.
According to \cite[Prop.~B.11]{Eme11} the functors in question induce an anti-equivalence between the category of orthonormalizable $A$-modules and the category of topologically free profinite $A$-modules and there is an isomorphism
\begin{equation}\label{topiso}
\Hom_A(\pi,A)/\m_A^n \Hom_A(\pi,A)\iso \Hom_{A_n}(\pi/\m_A^n\pi,A_n)
\end{equation}
for each $n \geq 1$.
Suppose $\pi$ is equipped with an $A$-linear $G$-action and fix $n$.
The induced action on $\pi/\m_A^n\pi$ is smooth if and only if the contragredient $G$-action on $\Hom_A(\pi,A)$ makes
\begin{equation*}
(\pi/\m_A^n\pi)^\vee=\Hom_{A_n}(\pi/\m_A^n\pi,A_n)
\end{equation*}
a profinite augmented $G$-representation over $A_n$.
Indeed, suppose the action on $\pi/\m_A^n\pi$ is smooth so that $\pi/\m_A^n\pi=\cup_{K'\subset K} (\pi/\m_A^n\pi)^{K'}$ where $K'\subset K$ runs over the open normal subgroups.
Then one obtains a structure of topological $A_n[[K]]=\varprojlim_{K'} A_n[K/K']$-module on
\begin{equation*}
\varprojlim_{K'} \Hom_{A_n}((\pi/\m_A^n\pi)^{K'},A_n)=(\pi/\m_A^n\pi)^\vee.
\end{equation*}
Conversely, suppose the induced $K$-action on $N \coloneqq (\pi/\m_A^n\pi)^\vee$ extends to a topological $A_n[[K]]$-module structure and let $f\in \Hom^\cont_{A_n}(N,A_n)=\pi/\m_A^n\pi$.
Since $N$ is compact and $A_n$ discrete the image of $f$ is finite and hence $\ker(f)\subset N$ is open.
By Lemma \ref{lineartop} there is an open $A_n$-submodule $N_0\subset \ker(f)$ which is $K$-stable and hence the $K$-action factors into a continuous action $N/N_0 \times K\to N/N_0$.
In particular, for any $n\in N$ we find an open normal subgroup $K_n\subset K$ such that $nh-n\in N_0$ for all $h\in K_n$.
The open normal subgroup $K'\subset K$ given as the intersection over the finitely many $K_n$ with $n\in N/N_0$ satisfies $nh-n\in N_0$ for all $n\in N, h\in K'$.
It follows
\begin{equation*}
f(nh)=f(nh-n)+f(n)=f(n)
\end{equation*}
so that $f$ is fixed by $K'$.

Next we claim that the contragredient $G$-action on $\Hom_A(\pi,A)$ makes $(\pi/\m_A^n\pi)^\vee$ a profinite augmented $G$-representation over $A_n$ for all $n$ if and only if $\Hom_A(\pi,A)$ is a profinite augmented $G$-representation over $A$.
Indeed, suppose the former holds.
The source of the isomorphism \eqref{topiso} has its quotient topology, which is profinite since $\m_A^n\Hom_A(\pi,A)$ is closed (if the ideal $\m_A^n$ is generated by $x_1,...,x_k$, the continuous map $\Hom_A(\pi,A)^{\oplus k}\to\Hom_A(\pi,A), (f_j)\mapsto \sum x_jf_j$ has image $\m_A^n\Hom_A(\pi,A)$).
Since the map \eqref{topiso} is a continuous bijection, it is a topological isomorphism.
So the contragredient action makes $\Hom_A(\pi,A)/\m_A^n \Hom_A(\pi,A)$ a topological $A_n[[K]]$-module.
Lemma \ref{lem:Hom} in connection with \eqref{topiso} shows that the natural map
\begin{equation*}
\Hom_A(\pi,A)\iso\varprojlim_n \Hom_A(\pi,A)/\m_A^n \Hom_A(\pi,A)
\end{equation*}
is a continuous bijection between profinite modules and therefore a topological isomorphism.
Now by assumption the right hand side is a topological $A[[K]]$-module (being an inverse limit of such) and this shows that $\Hom_A(\pi,A)$ is indeed a profinite augmented $G$-representation over $A$.
The converse is clear.
It follows from this discussion that $\pi$ lies in $\Mod_G^{\m_A-\cont}(A)^\fl$ if and only if $\pi^\vee$ lies in $\Mod_G^\proaug(A)^\fl$, as claimed.
Finally, given a morphism $A\to A'$ in $\Noe(\OO)$ we have
\begin{align*}
\Hom_A(\pi,A) \otimes_A A'/\m_{A'}^n &\simeq (\Hom_A(\pi,A)/\m_A^n \Hom_A(\pi,A)) \otimes_{A_n} A'/\m_{A'}^n \\
&\simeq \Hom_{A_n}(\pi/\m_A^n\pi,A_n) \otimes_{A_n} A'/\m_{A'}^n \\
&\simeq \Hom_{A'/\m_{A'}^n}(\pi/\m_A^n\pi \otimes_{A_n} A'/\m_{A'}^n,A'/\m_{A'}^n)\\
&\simeq \Hom_{A'/\m_{A'}^n}(\pi \otimes_A A'/\m_{A'}^n,A'/\m_{A'}^n).
\end{align*}
where the second equality follows from \eqref{topiso} and the third equality holds since the $A_n$-module $\pi/\m_A^n\pi$ is free.
Passing to the projective limit and using Lemma \ref{lem:Hom} yields $\Hom_A(\pi,A) \otimesh_A A'=\Hom_{A'}(\pi \otimesh_A A',A')$ as profinite $A'$-modules.
\end{proof}

\begin{rem}
\begin{enumerate}
\item If $A$ is Artinian, then Lemma \ref{ad} yields an anti-equivalence of categories
\begin{equation*}
(-)^{\vee}: \Mod_G^\infty(A)^\fl \iso \Mod_G^\proaug(A)^\fl.
\end{equation*}
If furthermore $A=\OO/\varpi^n\OO$ for some $n \geq 1$, then one recovers Pontrjagin duality (recall that $\OO/\varpi^n\OO$ is Gorenstein and hence isomorphic to the injective envelope of the $\OO/\varpi^n\OO$-module $k$).
\item If $A=\OO$, then one recovers (a $G$-equivariant version of) Schikhof duality
\begin{equation*}
(-)^{\vee}: \Mod_G^{\varpi-\cont}(\OO)^\fl \iso \Mod_G^\proaug(\OO)^\fl
\end{equation*}
(cf. \cite{Sch95}).
\item By \cite[Prop.~3.1.12]{Eme11}, $\pi$ is admissible (cf. part 2 of Remark \ref{rem:cont}) if and only if $\pi^\vee$ is finitely generated over $A[[K]]$ for some (equivalently any) compact open subgroup $K \subset G$.
\end{enumerate}
\end{rem}

We now make explicit parabolic induction through this duality.
We let $K \subset G$ be a compact open subgroup such that $G=P^- K$.
For example $K$ could be a special subgroup as in \cite[§~3.3]{Tit79}.
Observe that $P^- \twoheadrightarrow L$ is an open mapping so $P^-\cap K$ projects onto a compact open subgroup $K_L \subset L$.
In particular $A[[P^-\cap K]]$ acts on any profinite augmented representation $M$ of $L$ over $A$ via $A[[K_L]]$.
In this situation, the same definition \eqref{BCprofinite} produces a topological $A[[K]]$-module $A[[K]] \otimesh_{A[[P^-\cap K]]} M$.

\begin{prop}\label{dualind}
Let $\sigma$ be an object of $\Mod_L^{\m_A-\cont}(A)^\fl$.
There is a natural isomorphism of topological $A[[K]]$-modules
\begin{equation*}
A [[K]] \otimesh_{A[[P^- \cap K]]} \sigma^{\vee} \iso \left( \Ind_{P^-}^G \sigma \right)^\vee.
\end{equation*}
Moreover, the $G$-action on the dense $A$-submodule
\begin{equation*}
A [K] \otimes_{A[P^- \cap K]} \sigma^{\vee} \simeq A [G] \otimes_{A[P^-]} \sigma^{\vee}
\end{equation*}
coincides with the natural $G$-action on the right-hand side.
\end{prop}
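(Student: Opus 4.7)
The plan is to restrict functions from $G$ to $K$ using $G = P^-K$, then dualize and identify the dual with the claimed completed tensor product by reducing modulo $\m_A^n$ and appealing to the smooth/Artinian analogue.

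First I would note that restriction $f \mapsto f|_K$ gives a $K$-equivariant $A$-linear isomorphism between $\Ind_{P^-}^G \sigma$ and the space of continuous $\sigma$-valued functions on $K$ satisfying $f(pk) = pf(k)$ for all $p \in P^- \cap K$. Next, I would construct the duality map by dualizing the evaluation pairing: for each $k \in K$ and $\lambda \in \sigma^\vee$, the functional $f \mapsto \lambda(f(k^{-1}))$ lies in $(\Ind_{P^-}^G \sigma)^\vee$; using the contragredient $P^- \cap K$-action on $\sigma^\vee$ together with the equivariance $f(p^{-1} k^{-1}) = p^{-1} f(k^{-1})$, this bilinear pairing factors through the balanced tensor product $A[K] \otimes_{A[P^-\cap K]} \sigma^\vee$ and extends by continuity (since $\sigma^\vee$ is profinite and $K$ is compact) to a continuous $A[[K]]$-linear map
$$\Phi : A[[K]] \otimesh_{A[[P^-\cap K]]} \sigma^\vee \longrightarrow \left( \Ind_{P^-}^G \sigma \right)^\vee.$$

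To show $\Phi$ is a topological isomorphism, I would reduce modulo $\m_A^n$. By Remark \ref{rem:Indcont}, $(\Ind_{P^-}^G \sigma)/\m_A^n \cong \Ind_{P^-}^G(\sigma/\m_A^n\sigma)$, so by Lemma \ref{ad} and its compatibility with base change it suffices to prove the statement when $A$ is Artinian and $\sigma$ is smooth (and free over $A$). In that setting, $\Ind_{P^-}^G \sigma$ is the union of its $K'$-invariants for $K' \subset K$ compact open, each of which decomposes as a finite direct sum $\bigoplus_{g \in P^- \backslash G / K'} \sigma^{H_g}$, where $H_g$ is the image in $L$ of $P^- \cap g K' g^{-1}$; taking the $A$-linear dual and matching term by term, via $G = P^-K$, with the analogous decomposition of $A[[K]] \otimes_{A[[P^-\cap K]]} \sigma^\vee$ (viewed as an inverse limit over $K'$) gives the identification in the smooth case. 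Passing to the projective limit over $n$ and $K'$ then yields the claimed topological isomorphism. Finally, the $G$-equivariance on the dense submodule $A[K] \otimes_{A[P^-\cap K]} \sigma^\vee \simeq A[G] \otimes_{A[P^-]} \sigma^\vee$ (where the second identification uses $G = P^-K$ and the trivial action of the unipotent radical on $\sigma^\vee$) follows by a direct computation: right translation on $\Ind_{P^-}^G \sigma$ corresponds, under $\Phi$, to left multiplication by $g^{-1}$ on the $A[G]$-factor, tensored with the identity on $\sigma^\vee$.

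The main obstacle is the smooth/Artinian identification, which requires one to carefully express both sides as compatible systems of finite direct-sum decompositions indexed by $P^- \backslash G / K'$ and to verify the matching of the stabilizers $H_g$. Once this is established, the limit argument is routine, since both the completed tensor product on the left and the continuous dual on the right are $\m_A$-adically complete by construction, and the duality of Lemma \ref{ad} is compatible with the base change $A \twoheadrightarrow A/\m_A^n$.
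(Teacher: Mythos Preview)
Your overall strategy---construct $\Phi$ explicitly, reduce modulo $\m_A^n$, settle the Artinian case, then pass to the limit---is sound and genuinely different from the paper's. The paper never reduces to the Artinian case; instead it fixes once and for all a continuous section $s:P^-\backslash G\hookrightarrow K$ with image $\Omega$, so that multiplication gives $(P^-\cap K)\times\Omega\iso K$. From this it extracts $A[[K]]\simeq A[[P^-\cap K]]\otimesh_A\CC(\Omega,A)^\vee$ (using $A[[H]]\simeq\CC(H,A)^\vee$ for profinite $H$), whence
\[
A[[K]]\otimesh_{A[[P^-\cap K]]}\sigma^\vee\simeq\CC(\Omega,A)^\vee\otimesh_A\sigma^\vee\simeq\CC(\Omega,\sigma)^\vee\simeq(\Ind_{P^-}^G\sigma)^\vee.
\]
No Mackey decomposition, no double cosets, no induction on $n$: the section $\Omega$ trivializes the induced bundle in one move and everything else is formal manipulation of completed tensor products of function spaces.

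The weak point in your proposal is the Artinian step itself, which you describe as ``matching term by term'' the Mackey decomposition of $(\Ind_{P^-}^G\sigma)^{K'}$ with ``the analogous decomposition'' of $A[[K]]\otimesh_{A[[P^-\cap K]]}\sigma^\vee$. But producing that analogous decomposition is precisely the content of the proposition over Artinian $A$: you must show that the $K'$-coinvariants of the completed tensor product are $A[K/K']\otimes_{A[(P^-\cap K)K'/K']}(\sigma^\vee)_{H'}$ for the appropriate $H'$, identify each summand with $(\sigma^{H_g})^\vee$ (which uses freeness of $\sigma$ over $A$ to dualize invariants to coinvariants), and check that these identifications are compatible as $K'$ shrinks. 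None of this is wrong, but it is the heart of the matter and cannot be left implicit. You also need to verify that $\Phi$ is compatible with reduction mod $\m_A^n$ on both sides---i.e., that $(A[[K]]\otimesh_{A[[P^-\cap K]]}\sigma^\vee)\otimesh_A A/\m_A^n$ agrees with the completed tensor product formed over $A/\m_A^n$ for $\sigma/\m_A^n\sigma$---before you can invoke the limit. The paper's section-based argument sidesteps both of these verifications.
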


\begin{proof}
As explained in the proof of Lemma \ref{lem:Ind} the projection $\text{pr}: G \twoheadrightarrow P^-\backslash G$ admits a continuous section $s: P^-\backslash G \hookrightarrow G$.
We may and will assume that its image $\Omega \coloneqq s(P^-\backslash G)$ lies in $K$ (as a compact subset).
Thus multiplication defines homeomorphisms $P^-\times \Omega \iso G$ and $(P^-\cap K) \times \Omega \iso K$.
For technical reasons which will become clear below, we compose the multiplication homeomorphism $(P^-\cap K) \times \Omega \iso K$ with inversion on $P^-\cap K$ and work with that.

We now prove the proposition in several steps.

\medskip

\noindent\textbf{Step 1.}
$\CC(K,A) \simeq \CC((P^-\cap K) \times \Omega,A) \simeq \CC(P^-\cap K,A) \otimesh_A \CC(\Omega,A).$

\medskip

The first isomorphism is clear.
By Step $2$ in the proof of Proposition \ref{prop:Indcont} any of the occuring spaces of continuous functions is $\m_A$-adically complete and separated.
Hence it suffices to prove the second isomorphism modulo $\m_A^n$.
By the same lemma we are then reduced to show
\begin{equation*}
\CC^\infty((P^-\cap K) \times \Omega,A/\m_A^n)\simeq\CC^\infty(P^-\cap K,A/\m_A^n) \otimes_{A/\m_A^n} \CC^\infty(\Omega,A/\m_A^n)
\end{equation*}
for each $n$.
However, this is clear since $\Omega$ is a compact $p$-adic manifold (and hence strictly paracompact, i.e. any open covering admits a disjoint refinement).

\medskip

\noindent\textbf{Step 2.}
For a profinite group $H$, $A[[H]]\simeq \CC(H,A)^\vee$ as topological $A[[H]]$-modules.

\medskip

If $H'\subset H$ is an open normal subgroup, the $A/\m_A^n$-module $(A/\m_A^n)[H/H']$ is free.
It follows that
\begin{equation*}
A[[H]]= \varprojlim_{n,H'} (A/\m_A^n)[H/H'] = \varprojlim_{n} (A/\m_A^n)[[H]].
\end{equation*}

On the other hand,
\begin{equation*}
(A/\m_A^n)[[H]]\simeq \CC^\infty(H,A/\m_A^n)^{\vee}= \CC(H,A/\m_A^n)^{\vee}
\end{equation*}
(cf. the argument in \cite[§.~21]{Sch11}) and
\begin{equation*}
\CC(H,A/\m_A^n)^{\vee}=\Hom_{A/\m_A^n}(\CC(H,A)/\m_A^n\CC(H,A),A/\m_A^n)
\end{equation*}
according to Step $2$ in the proof of Proposition \ref{prop:Indcont}.
Passing to the projective limit and using Lemma \ref{lem:Hom} completes Step $2$.

\medskip

\noindent\textbf{Step 3.}
$(\CC(P^-\cap K,A) \otimesh_A \CC(\Omega,A))^\vee\simeq \CC(P^-\cap K,A)^\vee \otimesh_A \CC(\Omega,A)^\vee.$

\medskip

We let $A_n \coloneqq A/\m_A^n$ and $P^-_K \coloneqq P^-\cap K$.
Since $\CC(P^-_K,A)/\m_A^n\CC(P^-_K,A)$ and $\CC(\Omega,A)/\m_A^n\CC(\Omega,A)$ are free $A_n$-modules, the $A_n$-module
\begin{equation*}
\Hom_{A_n}(\CC(P^-_K,A)/\m_A^n\CC(P^-_K,A) \otimes_{A_n} \CC(\Omega,A)/\m_A^n\CC(\Omega,A), A_n)
\end{equation*}
is canonically isomorphic to
\begin{equation*}
\Hom_{A_n}(\CC(P^-_K,A)/\m_A^n\CC(P^-_K,A),A_n) \otimes_{A_n} \Hom_{A_n}(\CC(\Omega,A)/\m_A^n\CC(\Omega,A), A_n).
\end{equation*}
In turn, the latter is isomorphic to
\begin{equation*}
\CC(P^-_K,A)^\vee/\m_A^n\CC(P^-_K,A)^\vee \otimes_{A_n} \CC(\Omega,A)^\vee/\m_A^n\CC(\Omega,A)^\vee
\end{equation*}
according to \eqref{topiso}.
Passing to the projective limit using Lemma \ref{lem:Hom} yields the claim.

\medskip

\noindent\textbf{Step 4.}
$A[[K]]\simeq A[[P^-\cap K]] \otimesh_A \CC(\Omega,A)^{\vee}$ as topological $A[[P^-\cap K]]$-modules.

\medskip

This follows from Step $1$, Step $2$ and Step $3$.

\medskip

\noindent\textbf{Step 5.}
$A[[K]] \otimesh_{A[[P^-\cap K]]} \sigma^{\vee} \simeq \CC(\Omega,\sigma)^{\vee}$ as topological $A$-modules.

\medskip

By associativity of the completed tensor product we have as topological $A$-modules
\begin{align*}
A[[K]] \otimesh_{A[[P^-\cap K]]} \sigma^{\vee} &\simeq (\CC(\Omega,A)^{\vee} \otimesh_A A[[P^-\cap K]]) \otimesh_{A[[P^-\cap K]]} \sigma^{\vee} \\
&\simeq \CC(\Omega,A)^{\vee} \otimesh_A\sigma^{\vee} \\
&\simeq (\CC(\Omega,A) \otimesh_A\sigma)^{\vee} \\
&\simeq \CC(\Omega,\sigma)^{\vee}.
\end{align*}
Here, the third equality follows as in Step $3$ and the last equality uses Step $2$ in the proof of Proposition \ref{prop:Indcont}.

\medskip

The isomorphism takes $\mu \otimes \lambda$ to the $A$-linear form $\xi \mapsto \langle \mu, \xi_{\lambda}\rangle$ on $\CC(\Omega,\sigma)$, where $\xi_{\lambda}$ is the continuous function on $K$ defined as follows:
\begin{equation*}
\xi_{\lambda}(k)=\Big((\text{pr}_{P^-\cap K}(k)^{-1}\lambda) \circ \xi \Big)(\text{pr}_{\Omega}(k)).
\end{equation*}
(note the inverse here which comes from our normalization stated before Step $1$).

\medskip

\noindent\textbf{Step 6.}
$\Ind_{P^-}^G \sigma \iso \CC(\Omega,\sigma), f \mapsto f|_{\Omega}$ as $A$-modules.

\medskip

The restriction map is injective since $G=P^-\Omega$.
Given a function $f_\Omega\in \CC(\Omega,\sigma)$ we let $f(p,x) \coloneqq pf_\Omega(x)$ for $p\in P^-,x\in\Omega$ (this is well-defined since $G\simeq P^-\times\Omega$).
Since the orbit map $p\mapsto F(p) \coloneqq pf_\Omega(.)$ lies in
\begin{equation*}
\CC(P^-,\CC(\Omega,\sigma))\simeq \CC(P^-\cap K,A) \otimesh_A \CC(\Omega,A)
\end{equation*}
the function $f$ is continuous by Step $1$ and hence a preimage of $f_\Omega$.

\medskip

\noindent\textbf{Step 7.}
Proof of the lemma.

\medskip

For a function $f$ on $G$ let $f'(g) \coloneqq f(g^{-1})$.
Consider the continuous $A$-linear map $j$ defined by the commutative diagram
\begin{equation*}
\begin{tikzcd}
A[[K]] \otimes_{A[[P^-\cap K]]} \sigma^{\vee} \dar["\iota",""'] \rar["j",""'] & (\Ind_{P^-}^G \sigma)^\vee \dar["\vertiso",""'] \\
A[[K]] \otimesh_{A[[P^-\cap K]]} \sigma^{\vee} \rar["\sim"] & \CC(\Omega,\sigma)^{\vee}
\end{tikzcd}
\end{equation*}
where $\iota$ denotes the canonical (continuous) map into the completion and the isomorphisms as topological $A$-modules come from Step $5$ and $6$.
Our definition of $\xi_{\lambda}$ implies that $j$ sends $\mu \otimes \lambda$ to the linear form $f \mapsto \langle \mu, \lambda \circ (f'|_{K})\rangle$ and so is $A[[K]]$-linear.
Hence $j$ extends to an isomorphism of topological $A[[K]]$-modules between $A[[K]] \otimesh_{A[[P^-\cap K]]} \sigma^{\vee}$ and $(\Ind_{P^-}^G \sigma)^\vee$.
The final assertion concerning the $G$-action follows from the definitions.
\end{proof}

\subsection{Deformation functors revisited}

We let $N$ be a profinite augmented representation of $G$ over $k$.
We will study the various lifts of $N$ to profinite augmented representations $M$ of $G$ over $A\in \Pro(\OO)$.
In comparison with in section \ref{funcont}, we allow non-Noetherian coefficients here.
Recall the functor $i : \Cat \to \Set$ defined by $C \mapsto \Ob(C)/\simeq$.

\begin{defn}\label{defaug}
We define several categories and functors.
\begin{enumerate}
\item A lift of $N$ over $A \in \Pro(\OO)$ is a pair $(M,\phi)$ where
\begin{itemize}
\item $M$ is an object of $\Mod_G^\proaug(A)^\fl$,
\item $\phi: M \twoheadrightarrow N$ is an $A[G]$-linear surjection with kernel $\m_A M$, i.e. which induces an $A[G]$-linear isomorphism $M \otimesh_A k \iso N$.
\end{itemize}
A morphism $\iota : (M,\phi) \to (M',\phi')$ of lifts of $N$ over $A$ is an $A[G]$-linear morphism $\iota : M \to M'$ such that $\phi = \phi' \circ \iota$.
\item We define a covariant functor $\Deft_N : \Pro(\OO) \to \Cat$ by letting $\Deft_N(A)$ be the essentially small category of lifts of $N$ over $A$ for any $A \in \Pro(\OO)$, and $\Deft_N(\varphi) : \Deft_N(A) \to \Deft_N(A')$ be the base change functor for any morphism $\varphi : A \to A'$ in $\Pro(\OO)$.
\item We define the deformation functor $\Def_N : \Pro(\OO) \to \Set$ as the composite $i \circ \Deft_N$.
\end{enumerate}
\end{defn}

The following is one of the main results from \cite{Sch13}.

\begin{thm}[T.S.] \label{rep}
Suppose $\End_{k[G]}^\cont(N)=k$.
Then:
\begin{enumerate}
\item $\Def_N$ is representable: There exists a universal deformation ring $R_N \in \Pro(\OO)$ along with bijections $\Def_N(A)\iso \Hom_{\Pro(\OO)}(R_N,A)$ functorial in $A \in \Pro(\OO)$.
\item $\tf_N \coloneqq \Def_N(k[\epsilon]) \iso \Ext_{\Mod_G^{\proaug}(k)}^1(N,N)$, where $k[\epsilon]=k[x]/(x^2)$.
\item $R_N$ is Noetherian if and only if $d_N \coloneqq \dim(\tf_N)<\infty$, in which case $R_N$ is a quotient of the formal power series ring $\OO[[x_1,\ldots,x_{d_N}]]$.
\end{enumerate}
\end{thm}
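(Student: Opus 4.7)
The plan is to apply an adaptation of the Schlessinger--Mazur representability criterion in the profinite augmented setting. Since $\Pro(\OO)$ is equivalent to the pro-category of $\Art(\OO)$ and since any object of $\Mod_G^\proaug(A)^\fl$ for $A \in \Pro(\OO)$ is the inverse limit, over the open ideals $\af \subset A$, of its base changes to $A/\af \in \Art(\OO)$, I would first prove pro-representability of the restriction $\Def_N|_{\Art(\OO)}$ and then extend to $\Pro(\OO)$ by passing to the limit; this also gives the base-change bijections with $\Hom_{\Pro(\OO)}(R_N,-)$.

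For the Schlessinger--Mazur conditions on $\Def_N|_{\Art(\OO)}$, the key points to verify are: (H1) for a surjection $A'' \twoheadrightarrow A$ in $\Art(\OO)$, the natural map $\Def_N(A' \times_A A'') \to \Def_N(A') \times_{\Def_N(A)} \Def_N(A'')$ is surjective, obtained by gluing pseudobases of two topologically free lifts $M',M''$ that agree modulo $A$; (H2) this map is bijective in the small situation where $A=k$ and $A'' = k[\epsilon]$; and (H3) the tangent space acquires a $k$-vector space structure from that of $k[\epsilon]$. The hypothesis $\End_{k[G]}^\cont(N)=k$ plays the role of Mazur's trivial-endomorphism condition and ensures that every lift has only scalar automorphisms; this is what upgrades a formal hull to an honest universal object, giving (1). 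For (2), a lift of $N$ over $k[\epsilon]$ corresponds, through its underlying $k[\epsilon]$-module decomposition $M \simeq N \oplus \epsilon N$ together with the $G$-action, to a short exact sequence $0 \to N \to M \to N \to 0$ in $\Mod_G^\proaug(k)$, and two such give equivalent deformations iff they are Yoneda-equivalent as extensions; hence $\tf_N \iso \Ext^1_{\Mod_G^\proaug(k)}(N,N)$.

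For part (3), if $d_N < \infty$ I would lift a $k$-basis of $\m_{R_N}/(\m_{R_N}^2,\varpi)$ to elements $x_1,\dots,x_{d_N} \in \m_{R_N}$ and thereby define a continuous $\OO$-algebra map $\OO[[x_1,\dots,x_{d_N}]] \to R_N$; by a topological Nakayama argument applied modulo each open ideal of $R_N$, this map is surjective, so $R_N$ is Noetherian. Conversely, if $R_N$ is Noetherian then $\m_{R_N}/(\m_{R_N}^2, \varpi)$ is finite-dimensional and hence so is $\tf_N$, via the identification with $\Hom_{\Pro(\OO)}(R_N,k[\epsilon])$ provided by (1). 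The main obstacle I foresee is carrying out Schlessinger's classical arguments in the topologically enriched category $\Mod_G^\proaug(-)^\fl$: the fiber-product constructions used to verify (H1)--(H2) must produce objects that are genuinely topologically free with jointly continuous $G$-actions, and checking joint continuity on the glued module requires careful bookkeeping with pseudobases and the linear-topological structure provided by Lemma~\ref{lineartop}.
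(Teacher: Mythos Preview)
Your outline is sound, but note that the paper does not actually prove this theorem: its entire proof is a one-line citation, ``This summarizes Proposition~3.7, Theorem~3.8, and Corollary~3.9 from \cite{Sch13}.'' So there is nothing to compare against here beyond the observation that your sketch is essentially the strategy carried out in \cite{Sch13}: Schlessinger's criteria (H1)--(H4) are verified for $\Def_N|_{\Art(\OO)}$ in the profinite augmented setting, with the hypothesis $\End_{k[G]}^\cont(N)=k$ supplying the rigidity needed for (H4), and representability is then propagated to $\Pro(\OO)$ via the pro-equivalence. Your identifications for (2) and (3) are the standard ones and match what is done there.

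One small clarification on your outline: in verifying (H1) you speak of ``gluing pseudobases,'' but the cleaner way (and the one used in \cite{Sch13}) is to form the fibered product $M' \times_M M''$ directly in $\Mod_G^\proaug$ and then check it is topologically free over $A' \times_A A''$; this avoids the bookkeeping you flag at the end, since topological freeness can be tested after reduction to the residue field and the fibered product of compact modules is automatically compact with the right topology. The joint continuity of the $G$-action is inherited from the factors, so Lemma~\ref{lineartop} is not really needed here.
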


\begin{proof}
This summarizes Proposition 3.7, Theorem 3.8, and Corollary 3.9 from \cite{Sch13}.
\end{proof}

This gives a universal deformation $(M_N,\phi_N)$ in $\Deft_N(R_N)$ by choosing  a representive for the identity map in $\Def_N(R_N)\simeq \Hom_{\Pro(\OO)}(R_N,R_N)$. Any deformation $(M,\phi)$ over $A$ arises then from a unique morphism $R_N \to A$ in $\Pro(\OO)$ via base change.

\medskip

Our next result reconciles Definitions \ref{defcont} and \ref{defaug} over Noetherian rings.

\begin{lem}\label{dualdef}
Let $\bar{\pi}$ be a smooth $k[G]$-module with dual $\bar{\pi}^{\vee}=\Hom_k(\bar{\pi},k)$.
Then
\begin{enumerate}
\item $(-)^{\vee}: \widetilde{\Def}_{\bar{\pi}}(A) \to \widetilde{\Def}_{\bar{\pi}^{\vee}}(A)$ is an anti-equivalence for any $A \in \Noe(\OO)$,
\item $(-)^{\vee}: \Def_{\bar{\pi}}(A) \to \Def_{\bar{\pi}^{\vee}}(A)$ is a bijection for any $A \in \Noe(\OO)$.
\end{enumerate}
\end{lem}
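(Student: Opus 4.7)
The plan is to reduce (2) to (1): an anti-equivalence of categories descends to a bijection on isomorphism classes, so once (1) is established, (2) follows by applying $i:\Cat\to\Set$. I would therefore spend all the effort on (1).

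Fix $A \in \Noe(\OO)$. The functor on underlying objects is already handed to us by Lemma~\ref{ad}, which provides an anti-equivalence
\[
(-)^\vee: \Mod_G^{\m_A-\cont}(A)^\fl \iso \Mod_G^\proaug(A)^\fl
\]
compatible with base change. Applied over $k \in \Noe(\OO)$ itself, the very same anti-equivalence identifies the smooth $k[G]$-module $\bar{\pi}$ with its Pontrjagin dual $\bar{\pi}^\vee$ in $\Mod_G^\proaug(k)^\fl$. So the first step in the plan is purely categorical transport of structure.

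The second step is to translate the lifting datum along $(-)^\vee$. By the ``i.e.''-clauses of Definitions~\ref{defcont} and~\ref{defaug}, a lift of $\bar{\pi}$ over $A$ is equivalent to a pair $(\pi,\alpha)$ with $\pi \in \Mod_G^{\m_A-\cont}(A)^\fl$ and an isomorphism $\alpha : \pi \otimes_A k \iso \bar{\pi}$, while a lift of $\bar{\pi}^\vee$ is a pair $(M,\beta)$ with $M \in \Mod_G^\proaug(A)^\fl$ and an isomorphism $\beta : M \otimesh_A k \iso \bar{\pi}^\vee$. Dualizing $\alpha$ over $k$ and combining with the base-change isomorphism $(\pi\otimes_A k)^\vee \iso \pi^\vee \otimesh_A k$ supplied by Lemma~\ref{ad} produces a canonical $\alpha^\vee:\pi^\vee\otimesh_A k \iso \bar{\pi}^\vee$, so that $(\pi,\alpha) \mapsto (\pi^\vee,\alpha^\vee)$ is a well-defined functor $\widetilde{\Def}_{\bar{\pi}}(A) \to \widetilde{\Def}_{\bar{\pi}^\vee}(A)^{\mathrm{op}}$. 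Because the underlying anti-equivalence has a quasi-inverse again given by $(-)^\vee$ and again base-change compatible, the same recipe applied to $(M,\beta)$ recovers a lift of $\bar{\pi}$ inverse to the above up to natural isomorphism; this handles essential surjectivity.

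The third step is to transport morphisms. A morphism $\iota:(\pi,\alpha)\to(\pi',\alpha')$ is an $A[G]$-linear map whose reduction mod~$\m_A$ intertwines $\alpha$ with $\alpha'$; dualizing, the naturality of the base-change isomorphism forces the reduction mod~$\m_A$ of $\iota^\vee$ to intertwine $\alpha'^\vee$ with $\alpha^\vee$, i.e.\ to be a morphism of lifts in the opposite direction. Full faithfulness of $(-)^\vee$ on the ambient categories then upgrades this to a bijection on $\Hom$-sets, completing the anti-equivalence asserted in (1). The only genuinely delicate point, and the one on which I would spend the most care, is checking that the ``surjection with kernel $\m_A(-)$'' formulation of a lift is truly interchangeable with the ``isomorphism after reduction'' formulation on each side---this relies on orthonormalizability of $\pi$ and topological freeness of $M$, which together ensure $\m_A\pi$ and $\m_A M$ are closed and coincide with the kernels of reduction---after which the statement is a formal unwinding of Lemma~\ref{ad}.
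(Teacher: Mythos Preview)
Your proposal is correct and follows essentially the same approach as the paper: both reduce (2) to (1) and derive (1) from the anti-equivalence of Lemma~\ref{ad} together with its base-change compatibility. The only cosmetic difference is that the paper verifies the induced isomorphism $\pi^\vee \otimesh_A k \iso \bar{\pi}^\vee$ by an explicit pseudobasis computation (checking $\psi(e_i^\vee\otimes 1)=\phi(e_i\otimes 1)^\vee$), whereas you invoke the base-change clause of Lemma~\ref{ad} directly; these amount to the same thing.
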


\begin{proof}
Let us first make the duality functor in (1) a bit more precise.
An object $(\pi,\phi)$ of $\Deft_{\bar{\pi}}(A)$ is sent to an object $(M,\psi)$ of $\Deft_{\bar{\pi}^\vee}(A)$ by taking $M \coloneqq \pi^\vee = \Hom_A(\pi,A)$ and $\psi : \Hom_A(\pi,A) \to \Hom_k(\bar{\pi},k)$ to be the reduction mod $\m_A$.
Clearly $\psi$ is a continuous $A[G]$-linear morphism which factors through
\begin{equation*}
\psi : M \otimesh_A k=M/\overline{\m_A M} \longrightarrow \bar{\pi}^{\vee}.
\end{equation*}
To see this is an isomorphism it suffices to check that a pseudobasis is sent to a pseudobasis: Suppose $(e_i)_{i\in I}$ is a basis for $\pi$ (over $A$).
The dual $(e_i^{\vee})_{i\in I}$ is then a pseudobasis for $M=\pi^{\vee}$, and a fortiori $(e_i^{\vee}\otimes 1)_{i\in I}$ is a pseudobasis for $M \otimesh_A k$.
On the other hand $(\phi(e_i \otimes 1))_{i\in I}$ is a basis for $\bar{\pi}$.
It remains to verify that
\begin{equation*}
\psi(e_i^{\vee}\otimes 1)=\phi(e_i \otimes 1)^{\vee}
\end{equation*}
which follows straight from the definition of $\psi$.
We conclude that $(M,\psi)$ is indeed an object of $\widetilde{\Def}_{\bar{\pi}^{\vee}}(A)$.
Morphisms are dualized: If $\iota: \pi \to \pi'$ is compatible with $(\phi,\phi')$ then $\iota^{\vee}: \pi'^{\vee}\to \pi^{\vee}$ is compatible with $(\psi',\psi)$.
The same construction works in the opposite direction, and this sets up an anti-equivalence by Lemma \ref{ad}, which is furthermore compatible with base change.
Thus taking isomorphism classes yields (2).
\end{proof}

\subsection{Induced deformations revisited}

We let $\bar{\sigma}$ be a smooth $k[L]$-module with $\End_{k[L]}(\bar{\sigma})=k$ and $\bar{\pi} \coloneqq \Ind_{P^-}^G \bar{\sigma}$ (note that $\End_{k[G]}(\bar{\pi})=k$ by Theorem \ref{thm:Ord}).
By part (1) of Theorem \ref{rep}, both deformation functors $\Def_{\bar{\sigma}^{\vee}}$ and $\Def_{\bar{\pi}^{\vee}}$ are representable.
In particular, they are continuous in the sense that they commute with limits.
Since $\Pro(\OO)$ is equivalent to the category of pro-objects of $\Art(\OO)$, we deduce from Lemmas \ref{lem:defind} and \ref{dualdef} an injective morphism
\begin{equation*}
\Indd_{P^-}^G : \Def_{\bar{\sigma}^{\vee}} \hookrightarrow \Def_{\bar{\pi}^{\vee}}
\end{equation*}
between functors $\Pro(\OO) \to \Set$.
More precisely for any $A \in \Pro(\OO)$ which we write $A = \varprojlim_\af A/\af$ where $\af \subset A$ runs through the open ideals, $\Indd_{P^-}^G$ is the composite
\begin{align}\label{combine}
\Def_{\bar{\sigma}^{\vee}}(A) &\iso \varprojlim_{\af} \Def_{\bar{\sigma}^{\vee}}(A/\af) \notag \\
&\iso \varprojlim_{\af} \Def_{\bar{\sigma}}(A/\af) \notag \\
&\hookrightarrow \varprojlim_{\af} \Def_{\bar{\pi}}(A/\af) \\
&\iso \varprojlim_{\af} \Def_{\bar{\pi}^{\vee}}(A/\af) \notag \\
&\iso \Def_{\bar{\pi}^{\vee}}(A) \notag
\end{align}
where the second and the fourth isomorphisms are given by part (2) of Lemma \ref{dualdef} and the injection is given by part (2) of Lemma \ref{lem:defind}.

\medskip

The following is our main result.

\begin{thm}\label{eqgen}
Let $\bar{\sigma}$ be an admissible smooth $k[L]$-module with $\End_{k[L]}(\bar{\sigma})=k$ and $\bar{\pi} \coloneqq\Ind_{P^-}^G \bar{\sigma}$.
If $F=\Q_p$, then assume that
\begin{enumerate}[(a)]
\item Hypothesis \ref{hyp:HOrd} is satisfied,
\item $\Hom_L(\bar{\sigma},\bar{\sigma}^\alpha \otimes (\bar{\varepsilon}^{-1} \circ \alpha))=0$ for all $\alpha \in \Delta_L^{\perp,1}$.
\end{enumerate}
Then $\Indd_{P^-}^G: \Def_{\bar{\sigma}^{\vee}} \to \Def_{\bar{\pi}^{\vee}}$ is an isomorphism.
\end{thm}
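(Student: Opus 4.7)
The plan is to reduce everything to the Artinian case already handled in Theorem \ref{thm:defind}, and then pass to the limit over all open ideals. Concretely, I would start from the very definition \eqref{combine} of the morphism $\Indd_{P^-}^G$: for any $A = \varprojlim_{\af} A/\af$ in $\Pro(\OO)$, it is obtained by composing the base-change isomorphisms
\begin{equation*}
\Def_{\bar{\sigma}^{\vee}}(A) \iso \varprojlim_{\af} \Def_{\bar{\sigma}^{\vee}}(A/\af),\quad \Def_{\bar{\pi}^{\vee}}(A) \iso \varprojlim_{\af} \Def_{\bar{\pi}^{\vee}}(A/\af),
\end{equation*}
the duality bijections of Lemma \ref{dualdef}(2) applied levelwise over the Artinian quotients $A/\af$, and the Artinian induction map $\Ind_{P^-}^G: \Def_{\bar{\sigma}}(A/\af) \to \Def_{\bar{\pi}}(A/\af)$. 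All that remains to be done is to upgrade the \emph{injection} appearing in the middle of \eqref{combine} to a bijection.

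For this upgrade, the key input is Theorem \ref{thm:defind}(2): under hypotheses (a) and (b) (vacuous if $F \neq \Q_p$), the map $\Ind_{P^-}^G: \Def_{\bar{\sigma}}(A') \to \Def_{\bar{\pi}}(A')$ is a bijection for every $A' \in \Art(\OO)$. Applying this levelwise to $A' = A/\af$ produces a bijection $\varprojlim_{\af} \Def_{\bar{\sigma}}(A/\af) \iso \varprojlim_{\af} \Def_{\bar{\pi}}(A/\af)$. The continuity (i.e.\ commutation with cofiltered limits) of $\Def_{\bar{\sigma}^{\vee}}$ and $\Def_{\bar{\pi}^{\vee}}$ needed for the outer isomorphisms is free of charge: by Theorem \ref{rep}(1) both functors are pro-representable, and a representable set-valued functor on $\Pro(\OO) \cong \Pro(\Art(\OO))$ automatically commutes with such limits.

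Putting these pieces together, every arrow in the composition \eqref{combine} is a bijection, so $\Indd_{P^-}^G(A): \Def_{\bar{\sigma}^{\vee}}(A) \to \Def_{\bar{\pi}^{\vee}}(A)$ is bijective for every $A \in \Pro(\OO)$, and the bijection is functorial in $A$ since each constituent of the composition is. This gives the desired isomorphism of functors. There is no genuine obstacle here beyond bookkeeping; the substantive work was already done in Theorem \ref{thm:defind} (the Artinian essential surjectivity by induction on $\ell(A)$, using the description of $\HOrd[1]_P$ from Theorem \ref{thm:HOrd} together with hypothesis (b) to kill the obstruction class) and in Lemma \ref{dualdef} (the compatibility of Pontrjagin-style duality with deformations). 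The present statement is the formal limit-version of these Artinian facts.
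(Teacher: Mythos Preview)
Your proposal is correct and follows essentially the same approach as the paper: the paper's own proof is a single sentence stating that part (2) of Theorem \ref{thm:defind} makes the injection in \eqref{combine} onto, and your write-up simply unpacks this by explaining why each arrow in \eqref{combine} is a bijection (representability for the outer ones, Lemma \ref{dualdef}(2) for the duality steps, and Theorem \ref{thm:defind}(2) levelwise for the middle one).
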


\begin{proof}
This follows from part (2) of Theorem \ref{thm:defind}, which shows that \eqref{combine} is onto.
\end{proof}

Using the representability of the deformation functors, we reformulate Theorem \ref{eqgen} in terms of universal deformation rings and universal deformations.

\begin{cor}\label{univ}
With notation and assumptions as in Theorem \ref{eqgen} above, there is an isomorphism $R_{\bar{\pi}^\vee} \iso R_{\bar{\sigma}^\vee}$ in $\Pro(\OO)$ through which $M_{\bar{\pi}^\vee}=\Indd_{P^-}^G M_{\bar{\sigma}^\vee}$.
\end{cor}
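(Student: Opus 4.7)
The plan is to derive the corollary directly from Theorem \ref{eqgen} by a standard Yoneda-type argument, since both deformation functors $\Def_{\bar{\sigma}^\vee}$ and $\Def_{\bar{\pi}^\vee}$ are representable under the hypothesis $\End_{k[L]}(\bar{\sigma})=k$ (which implies $\End_{k[G]}(\bar{\pi})=k$ via Theorem \ref{thm:Ord}), so Theorem \ref{rep} applies on both sides.

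First, I would invoke Yoneda: an isomorphism of representable functors on $\Pro(\OO)$ corresponds to a unique isomorphism of their representing objects. Explicitly, Theorem \ref{eqgen} gives an isomorphism of functors
\begin{equation*}
\Indd_{P^-}^G : \Hom_{\Pro(\OO)}(R_{\bar{\sigma}^\vee},-) \iso \Hom_{\Pro(\OO)}(R_{\bar{\pi}^\vee},-),
\end{equation*}
so applied to $A=R_{\bar{\sigma}^\vee}$, the preimage of $\id_{R_{\bar{\sigma}^\vee}}$ is a unique morphism $\varphi : R_{\bar{\pi}^\vee} \to R_{\bar{\sigma}^\vee}$ which one checks is an isomorphism in $\Pro(\OO)$ by the usual Yoneda argument (its inverse corresponds to the preimage of $\id_{R_{\bar{\pi}^\vee}}$ under the isomorphism $\Def_{\bar{\sigma}^\vee}(R_{\bar{\pi}^\vee}) \iso \Def_{\bar{\pi}^\vee}(R_{\bar{\pi}^\vee})$).

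Second, I would track the universal deformations under this identification. By construction of universality, $M_{\bar{\sigma}^\vee}$ is the deformation over $R_{\bar{\sigma}^\vee}$ classified by $\id_{R_{\bar{\sigma}^\vee}}$, and $M_{\bar{\pi}^\vee}$ is the one over $R_{\bar{\pi}^\vee}$ classified by $\id_{R_{\bar{\pi}^\vee}}$. By the definition of $\Indd_{P^-}^G$ on functor values — it sends the class of a lift $(M,\phi)$ to the class of the induced lift — and by the naturality of the isomorphism in Theorem \ref{eqgen}, the image of $[M_{\bar{\sigma}^\vee}] \in \Def_{\bar{\sigma}^\vee}(R_{\bar{\sigma}^\vee})$ in $\Def_{\bar{\pi}^\vee}(R_{\bar{\sigma}^\vee})$ is on the one hand $[\Indd_{P^-}^G M_{\bar{\sigma}^\vee}]$, and on the other hand it corresponds via $\varphi^{-1}$ to the class $[M_{\bar{\pi}^\vee}]$. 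Therefore, after identifying $R_{\bar{\pi}^\vee}$ with $R_{\bar{\sigma}^\vee}$ via $\varphi$, we obtain an equality $[M_{\bar{\pi}^\vee}] = [\Indd_{P^-}^G M_{\bar{\sigma}^\vee}]$ of deformation classes, i.e.~the two lifts are isomorphic in $\Deft_{\bar{\pi}^\vee}(R_{\bar{\sigma}^\vee})$, which is exactly the asserted equality in the corollary.

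There is no real obstacle here beyond unpacking the representability machinery; the substantive content is entirely in Theorem \ref{eqgen}. The only small care needed is to remember that $\Indd_{P^-}^G$ on $\Pro(\OO)$ is the dual profinite version assembled from the Artinian parabolic induction via the limit formula \eqref{combine} and the duality of Lemma \ref{dualdef}, so that the symbolic equality $M_{\bar{\pi}^\vee} = \Indd_{P^-}^G M_{\bar{\sigma}^\vee}$ is genuinely the equality of a profinite augmented $G$-representation with the induction of a profinite augmented $L$-representation, consistent with Proposition \ref{dualind}.
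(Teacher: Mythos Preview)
Your argument is correct and is precisely the Yoneda unpacking that the paper leaves implicit (the paper's proof is the single sentence preceding the corollary: ``Using the representability of the deformation functors, we reformulate Theorem \ref{eqgen} in terms of universal deformation rings and universal deformations''). One small slip: at $A=R_{\bar{\sigma}^\vee}$ you mean the \emph{image} of $\id_{R_{\bar{\sigma}^\vee}}$ under $\Indd_{P^-}^G$, not its preimage, since $\id_{R_{\bar{\sigma}^\vee}}$ lives in the source $\Hom_{\Pro(\OO)}(R_{\bar{\sigma}^\vee},R_{\bar{\sigma}^\vee})$.
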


Finally, under a finiteness assumption, we can express the universal deformation as the dual of a continuous parabolic induction.

\begin{cor} \label{noeth}
With notation and assumptions as in Theorem \ref{eqgen} above, if furthermore $\dim_k \Ext_L^1(\bar{\sigma},\bar{\sigma}) < \infty$, then $R_{\bar{\pi}^\vee} \simeq R_{\bar{\sigma}^\vee}$ is Noetherian and there is an $R_{\bar{\pi}^\vee}[G]$-linear isomorphism $M_{\bar{\pi}^\vee}^\vee \simeq \Ind_{P^-}^G M_{\bar{\sigma}^\vee}^\vee$.
\end{cor}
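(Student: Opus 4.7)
The plan is to combine the Noetherianity criterion of Theorem \ref{rep}(3) with the isomorphism of universal deformation rings from Corollary \ref{univ}, and then to transport the identity $M_{\bar{\pi}^\vee} \simeq \Indd_{P^-}^G M_{\bar{\sigma}^\vee}$ through the anti-equivalence of Lemma \ref{ad} using the explicit description of dual induction in Proposition \ref{dualind}.

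First I would establish Noetherianity of $R_{\bar{\sigma}^\vee}$. By Theorem \ref{rep}(2) applied to $N = \bar{\sigma}^\vee$, the tangent space $\tf_{\bar{\sigma}^\vee}$ is identified with $\Ext^1_{\Mod_L^\proaug(k)}(\bar{\sigma}^\vee,\bar{\sigma}^\vee)$. The $A = k$ specialization of Lemma \ref{ad} is Pontryagin duality, an exact anti-equivalence between $\Mod_L^\infty(k)$ and $\Mod_L^\proaug(k)$, which identifies this Ext group with $\Ext^1_L(\bar{\sigma},\bar{\sigma})$; this is finite-dimensional by hypothesis. Hence Theorem \ref{rep}(3) yields that $R_{\bar{\sigma}^\vee}$ is Noetherian, and the isomorphism $R_{\bar{\pi}^\vee} \simeq R_{\bar{\sigma}^\vee}$ of Corollary \ref{univ} transfers Noetherianity to $R_{\bar{\pi}^\vee}$.

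For the explicit description of $M_{\bar{\pi}^\vee}^\vee$, set $R \coloneqq R_{\bar{\pi}^\vee} \simeq R_{\bar{\sigma}^\vee}$, now known to be Noetherian. Via the anti-equivalence of Lemma \ref{ad} applied over $R$, the topologically free profinite augmented representation $M_{\bar{\sigma}^\vee}$ corresponds to its continuous dual $M_{\bar{\sigma}^\vee}^\vee \in \Mod_L^{\m_R-\cont}(R)^\fl$, to which we may apply the continuous parabolic induction functor to produce $\Ind_{P^-}^G M_{\bar{\sigma}^\vee}^\vee \in \Mod_G^{\m_R-\cont}(R)^\fl$. Proposition \ref{dualind}, applied over $R$, then provides a natural topological $R[[K]]$-linear isomorphism
\begin{equation*}
\left( \Ind_{P^-}^G M_{\bar{\sigma}^\vee}^\vee \right)^\vee \iso R[[K]] \otimesh_{R[[P^- \cap K]]} M_{\bar{\sigma}^\vee},
\end{equation*}
compatible with the $G$-actions, whose right-hand side is by definition $\Indd_{P^-}^G M_{\bar{\sigma}^\vee}$. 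Combining this with the identification $M_{\bar{\pi}^\vee} \simeq \Indd_{P^-}^G M_{\bar{\sigma}^\vee}$ from Corollary \ref{univ} yields an $R[G]$-linear isomorphism $M_{\bar{\pi}^\vee} \simeq (\Ind_{P^-}^G M_{\bar{\sigma}^\vee}^\vee)^\vee$, and a final application of the anti-equivalence $(-)^\vee$ produces the desired $R[G]$-linear isomorphism $M_{\bar{\pi}^\vee}^\vee \simeq \Ind_{P^-}^G M_{\bar{\sigma}^\vee}^\vee$.

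The only delicate point is bookkeeping: one must verify that the $R[G]$-action coming out of Proposition \ref{dualind} matches the one built from the $R[L]$-module structure on $M_{\bar{\sigma}^\vee}^\vee$ through the continuous parabolic induction, which is precisely the content of the ``Moreover'' clause of that proposition. Since no new Ext computation or representation-theoretic input is required beyond what has already been invoked, I expect the argument to be essentially formal once Noetherianity of $R$ is in hand.
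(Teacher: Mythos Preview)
Your proof is correct and follows the same route as the paper's terse proof, which simply cites Theorem \ref{rep}(2)(3) together with Proposition \ref{dualind}. The only quibble is that $R[[K]] \otimesh_{R[[P^- \cap K]]} M_{\bar{\sigma}^\vee}$ is not literally the paper's definition of $\Indd_{P^-}^G M_{\bar{\sigma}^\vee}$ (which is defined via the composite \eqref{combine} through Artinian quotients), but unwinding that definition using the base-change compatibilities in Lemma \ref{ad} and Remark \ref{rem:Indcont} yields exactly this identification, so the argument goes through.
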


\begin{proof}
This follows from parts (2) and (3) of Theorem \ref{rep}, together with Proposition \ref{dualind}.
\end{proof}

\subsection{The case of principal series}

In this section we fix a $p$-adic torus $T$ (the $F$-points of an algebraic torus $T$ defined over $F$) and consider lifts of a given smooth character $\bar{\chi}: T \to k^\times$ over $A \in \Pro(\OO)$.
That is, continuous characters $\chi: T \to A^\times$ whose reduction mod $1+\m_A$ equals $\bar{\chi}$.
They comprise a set $\Def_{\bar{\chi}}(A)$ and the resulting functor $\Def_{\bar{\chi}} : \Pro(\OO) \to \Set$ is representable.
We give a precise description of the universal deformation.
This is standard but we find it instructive to include the details for completeness.

Let $T^{(p)} \coloneqq \varprojlim_j T/T^{p^j}$ denote the $p$-adic completion of $T$; a pro-$p$ group.
Each $T^{p^j}$ is an open subgroup, but they may not form a fundamental system of neighborhoods:
If $T=F^\times$ the intersection $\cap_{j=1}^\infty T^{p^j}=\mu_{p^\infty}'(F)$ consists of the prime-to-$p$ roots of unity in $F$.
The natural projection map $T \to T^{(p)}$ has dense image and is denoted $t \mapsto t^{(p)}$.

The Iwasawa algebra $\Lambda \coloneqq \OO[[T^{(p)}]]=\varprojlim_j \OO[T/T^{p^j}]$ is a complete local (since $T^{(p)}$ is pro-$p$, cf. \cite[§~19.7]{Sch11}) Noetherian $\OO$-algebra with residue field $k$.
Its maximal ideal $\m_{\Lambda}$ is the kernel of the reduced augmentation map $\Lambda \to \OO \to k$.
The natural map $T \to T^{(p)}\to \Lambda^\times$ is denoted by a bracket $t \mapsto [t]$; it takes values in $1+\m_{\Lambda}$ and is therefore continuous (as $T^{p^j}$ is then mapped into $1+\m_{\Lambda}^{j+1}$).

\begin{exmp}
If $T\simeq (F^\times)^n$ is a split torus $\Lambda$ can be made quite explicit.
As is well-known, by Lie theory $\OO_F^{\times}\simeq \mu_{\infty}(F) \times \Z_p^{[F:\Q_p]}$.
Therefore
\begin{equation*} \label{Lambda}
\Lambda\simeq \OO[\mu_{p^\infty}(F)^n][[X_1,\ldots,X_d]]
\end{equation*}
where $d=([F:\Q_p]+1)n$.
Note that $\mu_{p^\infty}(F)$ is trivial if $\zeta_p \notin F$.
For example if $F/\Q_p$ is unramified ($p$ odd); or just having ramification index $e<p-1$.
\end{exmp}

The universal deformation of $\bar{\chi}$ is given as follows.

\begin{prop}\label{char}
Define a character $\chi^{\univ}:T \to \Lambda^\times$ by the formula $\chi^{\univ}(t) \coloneqq \hat{\bar{\chi}}(t)[t]$ where the hat denotes the Teichmüller lift $k^\times\hookrightarrow \OO^\times$.
Then there are bijections
\begin{equation*}
\Def_{\bar{\chi}}(A) \iso \Hom_{\Pro(\OO)}(\Lambda,A),
\end{equation*}
functorial in $A \in \Pro(\OO)$; the inverse takes $\psi: \Lambda \to A$ to $\chi=\psi \circ \chi^{\univ}$.
\end{prop}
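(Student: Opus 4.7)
The plan is to check that $\chi^{\univ}$ is a bona fide lift of $\bar{\chi}$ over $\Lambda$, after which establishing the universal property amounts to constructing an explicit inverse to the natural map $\psi \mapsto \psi \circ \chi^{\univ}$ from $\Hom_{\Pro(\OO)}(\Lambda,A)$ to $\Def_{\bar{\chi}}(A)$.

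First I would verify that $\chi^{\univ}$ is a continuous character $T \to \Lambda^\times$ reducing to $\bar{\chi}$ modulo $\m_\Lambda$. The Teichmüller section $k^\times \hookrightarrow \OO^\times$ is a group homomorphism, so $\hat{\bar{\chi}}: T \to \OO^\times \to \Lambda^\times$ is a locally constant (hence continuous) character; combined with the continuous character $t \mapsto [t] \in 1+\m_\Lambda$ already recalled in the text, this exhibits $\chi^{\univ}$ as a continuous character whose reduction mod $\m_\Lambda$ recovers $\bar{\chi}$ (since $[t]$ is sent to $1$). Functoriality of $\psi \mapsto \psi \circ \chi^{\univ}$ in $A$ is then transparent.

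For the inverse direction, given a lift $\chi: T \to A^\times$ over $A \in \Pro(\OO)$, I would twist and consider $\chi \cdot \hat{\bar{\chi}}^{-1}: T \to 1+\m_A$, where $\hat{\bar{\chi}}$ is viewed in $A^\times$ via the structural map $\OO \to A$. The key observation is that $1+\m_A$ is a pro-$p$ group: writing $A = \varprojlim_\af A/\af$ with Artinian quotients (using that $\Pro(\OO)$ is the pro-category of $\Art(\OO)$), each finite group $1+(\m_A/\af)$ admits a filtration whose graded pieces embed into the $k$-vector space $(\m_A/\af)^i/(\m_A/\af)^{i+1}$ and are thus annihilated by $p$. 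Consequently the twisted character factors uniquely through the pro-$p$ completion $T^{(p)}$, yielding a continuous homomorphism $\tilde{\chi}: T^{(p)} \to 1+\m_A$.

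By the universal property of the completed group algebra, $\tilde{\chi}$ then extends uniquely to a morphism $\psi: \Lambda \to A$ in $\Pro(\OO)$; concretely, for every open $\af \subset A$ the map $T^{(p)} \to (A/\af)^\times$ factors through some $T^{(p)}/T^{(p),p^j}$, yielding an $\OO$-algebra morphism $\OO[T^{(p)}/T^{(p),p^j}] \to A/\af$ which one passes to the inverse limit. A short check confirms $\psi \circ \chi^{\univ} = \chi$, and uniqueness is automatic because $\{[t]:t\in T\}$ generates $\Lambda$ topologically. The only genuinely subtle point is this last extension step, since $A$ is merely profinite rather than Noetherian in general, so one must work with the presentation $\Lambda = \varprojlim_j \OO[T/T^{p^j}]$ and the profinite topology on $A$ at each finite level rather than invoking any Artin–Rees style argument.
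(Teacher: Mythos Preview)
Your proposal is correct and follows essentially the same approach as the paper: both twist by the Teichm\"uller lift to obtain a character $\delta=\chi\cdot\hat{\bar\chi}^{-1}$ landing in $1+\m_A$, factor it through the $p$-adic completion $T^{(p)}$, and then extend to $\Lambda\to A$ by passing through the Artinian quotients $A/\af$. The only cosmetic difference is that the paper makes the factorization explicit via the inclusion $\delta(T^{p^j})\subset 1+\m_A^{j+1}$ (so that for Artinian $A$ with $\m_A^{i+1}=0$ one factors through $T/T^{p^i}$ directly), whereas you phrase the same step more categorically as ``$1+\m_A$ is pro-$p$, hence $\delta$ factors through the pro-$p$ completion $T^{(p)}$''; both arguments unwind to the same computation.
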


\begin{proof}
Clearly $\chi^{\univ}$ is continuous (since $t \mapsto [t]$ is) and it is a lift of $\bar{\chi}$ since $\chi^{\univ}(t)\equiv \hat{\bar{\chi}}(t) \pmod{1+\m_{\Lambda}}$.
The function $\psi \mapsto \psi \circ \chi^{\univ}$ is clearly injective since the $\psi$'s are continuous and $\OO[T] \to \Lambda$ has dense image.
Now let $\chi: T \to A^\times$ be an arbitrary lift of $\bar{\chi}$ and consider $\delta: T \to A^\times$ defined by $\delta(t) \coloneqq \chi(t)\hat{\bar{\chi}}(t)^{-1}$ which obviously takes values in $1+\m_A$ (since $\chi$ and $\hat{\bar{\chi}}$ are both lifts of $\bar{\chi}$).
Thus $\delta(T^{p^j})\subset 1 +\m_A^{j+1}$.
In particular, if $A$ is Artinian (say $\m_A^{i+1}=0$) then $\delta$ factors through $T/T^{p^i}$ which results in a map $\Lambda \to \OO[T/T^{p^i}] \to A$ with the required property.
In general write $A=\varprojlim_{\af} A/\af$ and look at the Artinian lifts $\chi_{\af}: T \to (A/\af)^\times$ obtained from $\chi$.
By what we have just observed $\chi_{\af}=\psi_{\af} \circ \chi^{\univ}$ for a unique morphism $\psi_{\af}: \Lambda \to A/\af$ in $\Pro(\OO)$.
The uniqueness guarantees they are compatible as $\af$ varies, and their limit $\psi=\varprojlim_{\af} \psi_{\af}$ is the desired morphism $\Lambda \to A$.
\end{proof}

Note that a continuous character $\chi: T \to A^\times$ is the same as an augmented $T$-representation on the $A$-module $M=A$.
Indeed, if $K \subset T$ is a compact open subgroup, a continuous character $K \to A^\times$ extends uniquely to a continuous homomorphism $A[[K]] \to A$ (cf. \cite[§~19.3]{Sch11}).
Therefore, if we take $N=\bar{\chi}^{-1}$ in Definition \ref{defaug}, the functor $\Def_N$ there coincides with our $\Def_{\bar{\chi}}$ above, and in the notation of Theorem \ref{rep} we have $R_{\bar{\chi}}\simeq \Lambda$.

Moreover, by \cite[Prop.~5.1.4]{Hau16a}, when $T\simeq (F^\times)^n$ the tangent space of $R_{\bar{\chi}}$ has dimension
\begin{equation*}
d_{\bar{\chi}}=\dim_k\Ext_T^1(\bar{\chi},\bar{\chi})=
\begin{cases}
([F:\Q_p]+1)n & \text{if $\zeta_p \notin F$} \\
([F:\Q_p]+2)n& \text{if $\zeta_p \in F$}
\end{cases}
\end{equation*}
which is perfectly coherent with the number of variables in \eqref{Lambda}, cf. part (3) of Theorem \ref{rep}.

\medskip

Now assume $G$ is quasi-split and specialize to the case where $P=B$ is a Borel subgroup.
Then the Levi factor $L=T$ is a $p$-adic torus.

\begin{cor}
Let $\bar{\chi}: T \to k^\times$ be a smooth character and $\bar{\pi} \coloneqq \Ind_{B^-}^G \bar{\chi}$.
If $F=\Q_p$, then assume that $s_\alpha(\bar{\chi}) \cdot (\bar{\varepsilon}^{-1} \circ \alpha) \neq \bar{\chi}$ for all $\alpha \in \Delta^1$.
Then $R_{\bar{\pi}^\vee} \simeq \Lambda$ is Noetherian and $M_{\bar{\pi}^\vee}^\vee \simeq \Ind_{B^-}^G \chi^\univ$.
\end{cor}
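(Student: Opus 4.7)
The plan is to apply Corollary \ref{noeth} with $L=T$, $P=B$ and $\bar{\sigma}=\bar{\chi}$, and then to identify the target $R_{\bar{\chi}^\vee}$ (together with its universal deformation) with the Iwasawa algebra $\Lambda$ via Proposition \ref{char}.

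First I would verify the hypotheses of Corollary \ref{noeth}. Since $\bar{\chi}$ is one-dimensional it is admissible and satisfies $\End_{k[T]}(\bar{\chi})=k$ trivially. When $F=\Q_p$, Hypothesis \ref{hyp:HOrd} is vacuous because $T=\ZZ$ has no proper parabolic subgroups. Moreover $\Delta_T=\varnothing$, so $\Delta_T^{\perp}=\Delta$ and hence $\Delta_T^{\perp,1}=\Delta^1$, as recalled in Subsection \ref{grp}. Because all representations involved are one-dimensional, the condition
\[
\Hom_T\bigl(\bar{\chi},\bar{\chi}^{\alpha}\otimes(\bar{\varepsilon}^{-1}\circ\alpha)\bigr)=0
\]
is equivalent to $\bar{\chi}^{\alpha}\cdot(\bar{\varepsilon}^{-1}\circ\alpha)\neq\bar{\chi}$, and since $\bar{\chi}^{\alpha}=s_{\alpha}(\bar{\chi})$ this is precisely the genericity assumption of the corollary. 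Finally, the finiteness $\dim_k\Ext_T^1(\bar{\chi},\bar{\chi})<\infty$ needed to invoke Corollary \ref{noeth} is recorded (with an explicit value) in the discussion preceding the corollary. Applying Corollary \ref{noeth} then yields an isomorphism $R_{\bar{\pi}^\vee}\iso R_{\bar{\chi}^\vee}$ in $\Noe(\OO)$ together with an $R_{\bar{\pi}^\vee}[G]$-linear isomorphism $M_{\bar{\pi}^\vee}^\vee\simeq \Ind_{B^-}^G M_{\bar{\chi}^\vee}^\vee$.

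It remains to identify $R_{\bar{\chi}^\vee}$ with $\Lambda$ and $M_{\bar{\chi}^\vee}^\vee$ with $\chi^\univ$. For this I would combine Proposition \ref{char} with the duality of Lemma \ref{dualdef}. Proposition \ref{char} furnishes a functorial bijection $\Def_{\bar{\chi}}(A)\iso \Hom_{\Pro(\OO)}(\Lambda,A)$ with universal character $\chi^\univ:T\to\Lambda^\times$, while Lemma \ref{dualdef} (which, combined with representability of $\Def_{\bar{\chi}^\vee}$ via Theorem \ref{rep}, passes from $\Noe(\OO)$ to $\Pro(\OO)$ by the same limiting procedure as in \eqref{combine}) shows that $\Def_{\bar{\chi}}\iso\Def_{\bar{\chi}^\vee}$. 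Thus $R_{\bar{\chi}^\vee}\simeq\Lambda$, and under this isomorphism the universal object $M_{\bar{\chi}^\vee}^\vee$ corresponds to the free $\Lambda$-module of rank one on which $T$ acts through $\chi^\univ$. Plugging this into the $G$-equivariant isomorphism above produces $R_{\bar{\pi}^\vee}\simeq\Lambda$ and $M_{\bar{\pi}^\vee}^\vee\simeq\Ind_{B^-}^G\chi^\univ$, as desired.

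The only genuinely delicate point, and the one I would treat most carefully, is the bookkeeping matching the character-theoretic universal deformation of Proposition \ref{char} with the augmented universal deformation $M_{\bar{\chi}^\vee}$: one must track the different conventions (character vs.\ augmented representation vs.\ its continuous dual) so that $M_{\bar{\chi}^\vee}^\vee$ is indeed \emph{the character} $\chi^\univ$ and not some twist or inverse of it. Everything else reduces to a direct application of results already in the paper.
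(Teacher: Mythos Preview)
Your proposal is correct and follows essentially the same route as the paper: apply Corollary \ref{noeth} with $\bar{\sigma}=\bar{\chi}$ (noting that condition (a) is vacuous for a torus and condition (b) is precisely the stated genericity hypothesis), then identify $R_{\bar{\chi}^\vee}\simeq\Lambda$ and $M_{\bar{\chi}^\vee}^\vee\simeq\chi^\univ$ via Proposition \ref{char}. The only cosmetic difference is that the paper makes the last identification via the remark just before the corollary (that a continuous character is the same as an augmented $T$-representation on $M=A$, so $\Def_{\bar{\chi}}=\Def_{\bar{\chi}^{-1}}=\Def_{\bar{\chi}^\vee}$ directly), whereas you route it through Lemma \ref{dualdef}; both are fine.
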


\begin{proof}
This follows immediately from Corollary \ref{noeth} applied to $\bar{\sigma}=\bar{\chi}$.
Condition (a) in Theorem \ref{eqgen} is vacuous, and condition (b) is our assumption.
By Proposition \ref{char} we know that $R_{\bar{\chi}^\vee} \simeq \Lambda$ and $M_{\bar{\chi}^\vee}$ is the lift $(\chi^{\univ})^{\vee}$ which shows the corollary.
\end{proof}

\subsection*{Acknowledgments}

C.S. would like to thank M.~Emerton for helpful correspondence in the early stages of this project, and P.~Schneider for his suggestions for improvement of the first version of \cite{Sor15}.

\end{document}